\numberwithin{equation}{section}
\newtheorem{Theorem}{Theorem}[section]
\newtheorem{Corollary}[Theorem]{Corollary}
\newtheorem{Lemma}[Theorem]{Lemma}
\newtheorem{Proposition}[Theorem]{Proposition}
\newtheorem{Conjecture}[Theorem]{Conjecture}
 { \theoremstyle{definition}

\newtheorem{Example}[Theorem]{Example}
 }
\NewDocumentCommand\pr{m}{\ensuremath{\left(#1\right)}}
\NewDocumentCommand\of{m}{\ensuremath{\!\pr{#1}}}
\renewcommand{\C}{\mathbb{C}}
\newcommand{\R}{\mathbb{R}}
\newcommand{\Q}{\mathbb{Q}}
\newcommand{\Z}{\mathbb{Z}}
\newcommand{\OO}{\mathcal{O}}
\renewcommand{\Proj}[1]{{\mathbb{P}^{#1}}}
\NewDocumentCommand\rank{m}{\ensuremath{\operatorname{rank}{#1}}}
\NewDocumentCommand\GL{m}{\ensuremath{\operatorname{GL}\of{#1}}}
\NewDocumentCommand\Lie{mo}{\ensuremath{\mathfrak{\MakeLowercase{#1}}\IfValueT{#2}{\of{#2}}}}
\NewDocumentCommand\PSL{m}{\ensuremath{\mathbb{P}\!\operatorname{SL}_{#1}}}
\NewDocumentCommand\nForms{omm}%%
{%
\IfValueTF{#1}%%
{\ensuremath{\vb{#1}^{#2}}}
{\ensuremath{\Omega^{#2}_{#3}}}
}%
\NewDocumentCommand\cohomology{omm}{\ensuremath{H_{\IfValueT{#1}{\IfStrEq{#1}{db}{\overline{\partial}}{#1}}}^{#2}\of{#3}}}
\DeclareMathOperator{\Ad}{Ad}
\DeclareMathOperator{\ad}{ad}
\NewDocumentCommand\LieDer{}{\ensuremath{\mathcal L}}
\NewDocumentCommand\homotopyGroup{mm}{\ensuremath{\pi_{#1}({#2})}}
\NewDocumentCommand\fundamentalGroup{m}{\ensuremath{\homotopyGroup{1}{#1}}}
\NewDocumentCommand\MakeLie{m}{\expandafter\def\csname Lie#1\endcsname{\Lie{#1}}}
\NewDocumentCommand\graded{mm}{\ensuremath{\operatorname{gr}_{#1}{#2}}}
\NewDocumentCommand\prodquot{mmm}{\ensuremath{#1 \times^{#3}\! #2}}
\newcommand{\GZ}{G_0}
\newcommand{\tGZ}{\widetilde{G}_0}
\newcommand{\LieGZ}{\LieG_0}
\NewDocumentCommand\cb{O{1}m}%
{\ensuremath{%
\ifnum\pdf@strcmp{#1}{0}=\z@%
 {\OO{}_{#2}}%
 \else{%%%%%
 \ifnum\pdf@strcmp{#1}{1}=\z@%
 {}%
 \else{%%
 \ifnum\pdf@strcmp{#1}{-}=\z@%
 {-}%
 \else{%%%%
 \ifnum\pdf@strcmp{#1}{-1}=\z@%
 {-}%
 \else%
 {#1}%
 \fi
 }%
 \fi
 }%
 \fi
 K_{#2}
 }%%%%%
\fi
}
}
\NewDocumentCommand\acb{O{1}m}{\newcount\n \n=#1 \multiply\n by -1 \cb[\number\n]{#2}}
\newcommand{\amal}[3]{\ensuremath{{#1} \mathbin{\times^{#2}} \! #3}}
\newcommand*{\Shaf}[2][.]%
{%%
\operatorname{\text{\foreignlanguage{russian}{ш}}}\ifnum\pdf@strcmp{#1}{.}=\z@\else_{#1}\fi\ifnum\pdf@strcmp{#2}{}=\z@\else\! #2\fi
}%%
\newcommand*{\Ii}[2][.]%
{%%
\operatorname{Ii}\ifnum\pdf@strcmp{#1}{.}=\z@\else_{#1}\fi#2
}%%
\def\lst{B,G,H,K,L,P,Q,S,Z}
\lst\do{\expandafter\MakeLie \i}
\newcommand*{\dimC}[1]{\ensuremath{\dim_{\C} \! #1}}
\NewDocumentCommand\vb{m}{{\bm{#1}}}
\NewDocumentCommand\vbTM{}{\vb{{\mathfrak{g}/{\mathfrak{h}}}}}
\NewDocumentCommand\Iitaka{O{\cb{}}m}{\ensuremath{{}^{#1}\!{#2}}}
\newcommand*{\acf}[1]{\Iitaka[\acb{}]{#1}}
\NewDocumentCommand\red{m}{#1_{\rm r}}
\NewDocumentCommand\urad{m}{#1_{\rm u}}
\RenewDocumentCommand\to{}{\,\longrightarrow\,}
\NewDocumentCommand\rationalMap{omm}{\ensuremath{\IfValueTF{#1}{#1 \colon #2\mathbin{\,\tikz \draw[dashed,densely dashed,->,line cap=round] (.1,0) -- (.65,0);\,} #3}{#2\mathbin{\,\tikz \draw[dashed,densely dashed,->,line cap=round] (.1,0) -- (.65,0);\,} #3}}}
\begin{document}

\allowdisplaybreaks

\newcommand{\arXivNumber}{2302.13649}

\renewcommand{\PaperNumber}{030}

\FirstPageHeading

\ShortArticleName{Locally Homogeneous Holomorphic Geometric Structures on Projective Varieties}

\ArticleName{Locally Homogeneous Holomorphic Geometric\\ Structures on Projective Varieties}

\Author{Indranil BISWAS~$^{\rm a}$ and Benjamin MCKAY~$^{\rm b}$}
\AuthorNameForHeading{I.~Biswas and B.~McKay}
\Address{$^{\rm a)}$~Department of Mathematics, Shiv Nadar University,\\
\hphantom{$^{\rm a)}$}~NH91, Tehsil Dadri, Greater Noida, Uttar Pradesh 201314, India}
\EmailD{\href{mailto:indranil.biswas@snu.edu.in}{indranil.biswas@snu.edu.in}}
\URLaddressD{\url{https://snu.edu.in/faculty/indranil-biswas/}}

\Address{$^{\rm b)}$~School of Mathematical Sciences, University College Cork, Cork, Ireland}
\EmailD{\href{mailto:b.mckay@ucc.ie}{b.mckay@ucc.ie}}
\URLaddressD{\url{https://ben-mckay.github.io/benmckay.github.io/}}

\ArticleDates{Received April 01, 2023, in final form March 29, 2024; Published online April 08, 2024}

\Abstract{For any smooth projective variety with holomorphic locally homogeneous structure modelled on a homogeneous algebraic variety, we determine all the subvarieties of it which develop to the model.}

\Keywords{complex projective manifold; Cartan geometry; Moishezon manifold}

\Classification{53B21; 53C56; 53A55}

\section{Introduction}

This paper is concerned with the following classification problems:
\begin{itemize}\itemsep=0pt
\item
the complex manifolds admitting holomorphic Cartan geometries,
\item
the geometries they admit,
\item
the subvarieties of such a manifold which can be developed to the model of such a geometry,
\item
the symmetries of those geometries.
\end{itemize}
While none of these problems can be currently solved in complete generality, we solve various of them under additional hypotheses, for example requiring the complex manifold to be a smooth projective variety, or requiring the geometry to be parabolic or flat or both.
Recall that a~holomorphic locally homogeneous $(X, G)$-structure is precisely a flat holomorphic $(X, G)$-Cartan geometry.
Making use of terms defined below, we will prove:

\begin{Theorem}\label{theorem:main}
Suppose that
\begin{enumerate}\itemsep=0pt
\item[$(1)$]
$(X, G)$ is a complex algebraic homogeneous space,
\item[$(2)$]
$M$ is a connected smooth projective variety,
\item[$(3)$]
$M$ has a flat holomorphic $(X, G)$-Cartan geometry.
\end{enumerate}
Then, after perhaps replacing $M$ by a finite unramified covering of it, $M$ belongs to a tower of holomorphic fibrations
\[M\to M'\to M''\to M'''\to S,\] where
\begin{itemize}\itemsep=0pt
\item
$M'$, $M''$, $M'''$, $S$ have no rational curves,
\item
$M\to M'$ is a holomorphic fiber bundle mapping with flag variety fibers,
\item
the Cartan geometry on $M$ is lifted from $M'$,
\item
the maps $M'\to M'' \to M'''\to S$ are holomorphic fibrations of abelian varieties,
\item
the composition $M'\to M''\to M'''$ is a trivial fibration $M' = M'''\times A$ for some abelian
variety $A$,
\item
a finite map from a connected compact complex analytic variety to $M$ develops to a map to the model $X$, after perhaps replacing by a finite unramified covering, just when it lies in a finite unramified
covering of a fiber of $M\to M''$,
\item
every fiber of $M\to M''$ develops to a map to the model $X$, after perhaps replacing by a~finite unramified covering,
\item
the fibers of $M'\to M''$ are bounded in dimension by the dimension of the fibers of the ant fibration $X \longrightarrow
\overline{X}$ defined below,
\item
$M'\to S$ has a holomorphic section,
\item
$S$ has ample canonical bundle.
\end{itemize}
\end{Theorem}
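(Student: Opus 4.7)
The plan is to exploit the flatness of the Cartan geometry via the developing map $\delta\colon \tilde M \to X$ from the universal cover of $M$, which is equivariant for the holonomy representation $\rho\colon \fundamentalGroup{M} \to G$. Since $M$ is projective, every irreducible compact subvariety $Z \subset M$ lifts, after passing to a suitable unramified cover, to a compact subvariety of $\tilde M$, and $\delta(Z)$ is then a compact analytic subvariety of $X$. In particular, rational curves on $M$ develop to rational curves in $X$, which by the general structure theory of algebraic homogeneous spaces are forced into fibers of the ant fibration $X \to \overline{X}$, whose base by definition carries no rational curves.

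Next I would descend the ant fibration from $X$ to $M$. The composition $\tilde M \to X \to \overline{X}$ is equivariant for the induced homomorphism $\fundamentalGroup{M} \to \overline{G}$, where $\overline{G}$ is the image of $G$ in $\operatorname{Aut}(\overline{X})$. After replacing $M$ by a finite unramified covering, this composition descends to a holomorphic fibration $M \to M'$ with flag-variety fibers, and a functoriality argument for the reduction of Cartan connections along the ant fibration identifies the original geometry on $M$ as lifted from a flat $(\overline{X},\overline{G})$-Cartan geometry on $M'$; by construction, $M'$ contains no rational curves.

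The third step applies further structure theory to $\overline{G}$. Since $\overline{X}$ contains no rational curves, the linear part of $\overline{G}$ has no nontrivial parabolic quotients, so the action of $\overline{G}$ on $\overline{X}$ factors through successive extensions by abelian varieties. Using the Chevalley--Rosenlicht decomposition of $\overline{G}$ and iterating the descent arguments of the previous step, one obtains the tower $M' \to M'' \to M''' \to S$ whose successive fibers are abelian varieties and whose base $S$ is of general type. The trivialization $M' \cong M''' \times A$ over $M''$, after passing to a finite cover, should follow from Poincar\'e's complete reducibility theorem applied to the abelian fibration $M'\to M''$ together with the flatness of the geometry. The existence of a section $M' \to S$ and the ampleness of $K_S$ then come from identifying $S$ as the Iitaka base of $M'$ combined with rigidity for morphisms to varieties of general type and Kodaira's ampleness criterion.

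The main obstacle I anticipate is the compatibility of the descent with the Cartan geometry: one must show that the holonomy factors sufficiently well through $G \to \overline{G}$ on a finite unramified cover so that the quotient geometry on $M'$ really is a flat Cartan geometry of the claimed type, and similarly at each stage of the tower. A second delicate point is the tight ``just when'' characterization of developable subvarieties: one must rule out finite maps from compact varieties into $M$ which develop to $X$ but fail to lie in a fiber of $M \to M''$. This reduces to analyzing how $\delta$ restricts to the preimage of such a subvariety and using the absence of rational curves on the intermediate quotients $M''$, $M'''$, $S$ to force the image in $\overline{X}$ to be constant.
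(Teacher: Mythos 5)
There is a genuine gap, and it begins with a misidentification of what the ant fibration is. You assert that rational curves in $X$ are ``forced into fibers of the ant fibration $X\to\overline{X}$, whose base by definition carries no rational curves,'' and you then try to obtain the flag-bundle map $M\to M'$ by descending the ant fibration. Both claims are false: for a flag variety $X=G/P$ the ant fibration is the identity map (the anticanonical bundle is ample, so its sections separate points), so $\overline{X}=X$ is covered by rational curves; and the fibers of the ant fibration are never positive-dimensional flag varieties --- they are quotients of complex Lie groups by discrete subgroups, hence holomorphically parallelizable. The map $M\to M'$ in the theorem comes from an entirely different mechanism, namely the dropping theorem for holomorphic Cartan geometries on uniruled projective varieties (Theorem~\ref{theorem:drop}), which is a prior result about rational curves and has nothing to do with anticanonical sections. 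Moreover, the ant fibration of $X$ only induces a \emph{foliation} on $M$ in general, not a fibration, so ``descending'' it requires the splitting results of Section~\ref{section:splitting}, which you do not supply. Its actual role in the theorem is only to bound the dimension of the fibers of $M'\to M''$, via the fact that any developing subvariety has trivial restricted canonical bundle (nef from $M$, anti-nef from $X$) and hence lies in an ant fiber.

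The second, larger omission is the Shafarevich machinery, which is the engine producing $M''$, $M'''\times A$, and $S$, and which is absent from your outline. The tower $M'\to M''\to S$ does not come from a Chevalley--Rosenlicht decomposition of $\overline{G}$ (indeed $G$ is affine algebraic, so it contains no abelian varieties; the abelian fibers arise from subvarieties of $M'$ with trivial canonical bundle, which are tori by the Bogomolov decomposition combined with the Steinness of the universal covers of the ant fibers). Rather: $M'\to S$ is the Shafarevich map of the representation of $\fundamentalGroup{M'}$ into the semisimple Levi quotient of the Zariski closure of the holonomy, shown to coincide with the Iitaka fibration and then shown to be everywhere-defined, smooth, and with ample-canonical base by a long argument (Theorem~\ref{theorem:JR}) using Kawamata's solution of the Iitaka conjecture for abelian fibers, Koll\'ar's structure theory, and the absence of rational curves; $M'\to M''$ is the Shafarevich map for the kernel of the holonomy morphism, and the ``just when'' characterization of developing subvarieties is exactly the defining property of that map (a subvariety develops, up to finite cover, iff its fundamental group has finite holonomy image), not an argument about constancy of the image in $\overline{X}$; and $M'=M'''\times A$ comes from Lieberman's splitting theorem for non-uniruled projective varieties carrying holomorphic vector fields, not from Poincar\'e reducibility. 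Without the Shafarevich map and the canonical-bundle triviality argument, your outline cannot produce $M''$ or prove that $S$ has ample canonical bundle.
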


In this theorem, we allow a single point to be considered a zero-dimensional flag variety, and also to be considered a zero-dimensional abelian variety, and also to be considered a zero-dimensional smooth projective variety with ample canonical bundle.

\begin{Example}
A \emph{flag variety} is a rational homogeneous projective variety \cite{Alekseevsky:1997}.
As we will see, holomorphic $(X, G)$-structures modelled on flag varieties (also known as \emph{parabolic geometries}) have
ant fibration $X \longrightarrow \overline{X} = X$ which coincides with the identity map.
Take any smooth projective variety $M$ with a holomorphic $(X, G)$-geometry.
Possibly replacing $M$ by a finite unramified covering space, we
have $M' = M''$, i.e., each abelian variety fiber is a single point.
Hence the fibers of the map $M\to M'$ are precisely the subvarieties which, after finite unramified covering, develop to the model.
These are flag varieties, more precisely they are the fibers~$X_0$ of a $G$-equivariant map $X_0\to X\to X'$ of flag varieties.
So if $M$ is not a fiber bundle with fibers $X_0$, then only points of $M$ develop to the model.
If $M$ is such a fiber bundle, then its fibers all develop to the model, precisely as fibers of $X_0\to X\to X'$.
This completes the classification of developing subvarieties of flat holomorphic parabolic geometries. In particular, if $X$ is
a minimal flag variety, i.e., $X = G/P$ for $P \subseteq G$ a maximal parabolic subgroup, then only points can develop from
any holomorphic $(X, G)$-geometry on any smooth projective variety, except if $M = X$, which can only bear the standard flat
model geometry, and develops to itself by any element of $G$. From this point of view, Theorem~\ref{theorem:main} is a no go
theorem: there are no additional important examples where we can apply developing maps, after our previous work~\cite{Biswas.McKay:2016} developing rational curves.
\end{Example}

\begin{Example}
There is a unique family of smooth projective varieties, the Jahnke--Radloff manifolds, which admit holomorphic projective connections and
which are not ball quotients, tori or projective spaces \cite{Jahnke/Radloff:2015}. By our results, since the model
$(X, G) = (\Proj{n}, \PSL{n+1})$ has trivial ant fibration (mentioned in Theorem~\ref{theorem:main}), no positive-dimensional
subvariety of these mysterious Jahnke--Radloff varieties develops to the model. We do not see how to prove this directly from the direct
construction of Jahnke and Radloff \cite{Jahnke/Radloff:2015}. We do not even see how to prove this directly for ball quotients from their
explicit construction.
\end{Example}

\begin{Example}
The group $G := \PSL{2}$ has its usual action on ${\mathbb C}{\mathbb P}^1$, which induces an action on $\text{Sym}^2\big({\mathbb C}{\mathbb P}^1\big)$
(i.e., the space of quadrics $ax^2+bxy+cy^2 = 0$ on the projective line).
Let~${X := \Proj{2}-Q}$ be the complement of the smooth quadric $Q = \big(b^2 = 4ac\big)$, hence $X$ is the set of quadrics on the projective line, i.e., the set of unordered pairs of points of the projective line.
The anticanonical bundle of $\Proj{2}$ is positive, so the anticanonical fibration has only points as fibers, therefore the same is
true of $X \subseteq \Proj{2}$, and consequently the ant fibration of $X$ also has only points as fibers.
Nothing is known about $(X, G)$-structures on complex surfaces.
Suppose that $M$ is a smooth projective surface with an $(X, G)$-structure.
Consider our tower
\[M \longrightarrow M' \longrightarrow M'' \longrightarrow M''' \longrightarrow S.\]
The first arrow $M \to M'$ has flag variety as fibers, and they develop to the model.
In the model, $X$ has no rational curves, since every rational curve in $\Proj{2}$ intersects the smooth quadric curve $Q$.
Flag varieties of positive dimension are covered by rational curves.
Hence the flag variety fibers of $M \to M'$ have dimension zero, i.e., $M' = M$.
Thus $M \to M''$ is an abelian fibration, and its fibers develop to $X$.
By the same argument, the fibers are points, and hence~${M = M' = M''}$.
Therefore, we have $M = M' = M'''\times A$.
If $M$ is an elliptic fibration, it has no singular fibers, and hence it is a product of a curve with an elliptic curve.
This means that $M$ is an abelian surface or a surface with ample canonical bundle or a product of an elliptic curve with some curve of
genus $ \ge 1$.

We can see more about this example from a different angle: since $X \subseteq \Proj{2}$, every $(X, G)$-structure imposes a
holomorphic flat projective connection. Thus $M$ bears a flat holomorphic projective connection, which
implies that it is the projective plane, a ball
quotient, or a torus with translation invariant flat holomorphic affine connection \cite[Main theorem]{KobayashiOchiai:1980}. Its
developing map to $\Proj{2}$ intersects $Q$ unless $M$ is a ball quotient. Each ball quotient has a unique holomorphic projective
connection, which is flat \cite{McKay:2016}, and hence it is an $(X, G)$-structure for this~$(X, G)$ since it develops to the ball in the
projective plane.
\end{Example}

Our research programme aims to classify holomorphic Cartan geometries on smooth projective varieties, as a natural class of Cartan
geometries not approachable by symmetry group methods, but perhaps approachable using recent advances in algebraic geometry. Our goal in
this paper is to prove Theorem~\ref{theorem:main}, which clearly provides strong constraints on the possible subvarieties in a flat
holomorphic Cartan geometry which can develop to the model. Roughly, it is a no go theorem: as we will see, it prevents the application of
developing maps in any of the important types of Cartan geometries. We expect that these results continue to hold for holomorphic Cartan
geometries which are not flat, as holomorphy is very similar to flatness~\cite{Atiyah:1957}. We also expect that these results continue to
hold on compact K\"ahler manifolds, and more generally on Fujiki manifolds (i.e., compact complex manifolds dominated by compact K\"ahler
manifolds), since the known examples are similar to the smooth projective examples.

It seems likely that most readers of this paper are differential geometers, interested in Cartan geometries, and not algebraic geometers,
so we provide complete definitions and detailed references for standard results in complex algebraic geometry, with apologies to the
algebraic geometers.

\section{The main ideas and techniques of the paper}

Throughout the paper, we imagine we are faced with a compact complex manifold $M$ and a~complex homogeneous space $X = G/H$, the model,
and that $M$ bears a flat holomorphic Cartan geometry modelled on $X$. We want to identify the possible choices of subvarieties~$Z \subseteq M$ which develop to $X$ by some developing map associated to the geometry; see Section~\ref{subsection:Development}
for a~precise definition. We approach the problem from two directions simultaneously: saying what we can about $Z$ in terms of $M$,
independent of choice of $X$, but also in terms of $X$, independent of choice of $M$.

Any flat holomorphic principal bundle over a connected compact complex manifold $M$ arises from a representation of its fundamental
group. Under some technical hypotheses, it is roughly true that this representation of the fundamental group factors through
$\pi_1(M) \to \pi_1(S)$ for some holomorphic map $M \to S$, the \emph{Shafarevich fibration}, with complex torus fibers, to some compact
complex manifold $S$, such that the map contracts just those subvarieties of $M$ on whose fundamental group the representation is
trivial, and hence $M\to S$ is essentially uniquely determined \cite{Kollar:1993,Zuo:1999}. (Actually, the map is not quite
defined, so there are some technical details.) Moreover $S$ has negative Ricci curvature in some K\"ahler metric, i.e., ample canonical
bundle.
The Shafarevich fibration we use is a special case of the collection of Shafarevich fibrations~$\operatorname{sh}_{\rho}$ of an algebraic variety \cite[p.~6]{Zuo:1999}, each associated to a representation $\rho$ of its fundamental group.
These $\operatorname{sh}_{\rho}$ all factor the universal Shafarevich fibration $\operatorname{sh}$ \cite[Definition 1]{Zuo:1999}, which does not depend on a representation.

Suppose that a subvariety $Z \subseteq M$ develops to $X$, so it is identified with a subvariety $Z \subseteq X$.
The holonomy of the Cartan connection must be trivial on $Z$ to avoid a multivalued developing map.
Therefore, $Z$ sits in a fiber of the Shafarevich fibration.
But conversely, since the holonomy is essentially trivial on the fibers of the Shafarevich fibration, and the Cartan geometry is
flat, the fibers themselves develop.
(This is not quite true: due to technical problems, the holonomy might only be solvable on the fibers, and hence the fibers might not develop.)

If a particular compact complex manifold $M$ were given explicitly, and we could somehow compute the Shafarevich fibration, we would
thus expect to know which subvarieties develop from~$M$, independent of $X$. But there are few examples of complex manifolds $M$
with holomorphic principal bundle and holomorphic flat connection for which the Shafarevich fibration is known. Again our aim is to say
what we can about $Z$ in terms of $M$, independent of choice of $X$, but also in terms of $X$, independent of choice of $M$.
Therefore, we want to have more information, that does not require knowing so much about both $M$ and $X$ simultaneously,
i.e., without knowing the Shafarevich fibration.

When we develop subvarieties, ambient tangent bundles are identified:
\[
 TM|_Z \cong TX|_Z
\]
by the developing map.
But $X$ has tangent bundle spanned by global holomorphic sections, since it is homogeneous.
In particular, $X$ has semipositive anticanonical bundle, i.e., lots of wedge products of holomorphic vector fields.

On the other hand, we saw that $M$ is essentially a bundle of tori over a manifold $S$ with negative Ricci curvature,
hence $M$ has tangent bundle something like a sum of a trivial bundle with some negative curvature directions.
In particular, the canonical bundle of $M$ is semipositive.
Therefore, along $Z$, the ambient canonical bundle is semipositive, from $M$, and seminegative, from $X$, hence trivial.
We can thus employ the huge theory of Calabi--Yau manifolds, i.e., smooth projective varieties with trivial canonical bundle.

This helps to identify the possible choices of $Z$ in terms of $M$, independent of model $X$, but also in terms of $X$,
independent of choice of $M$. We will see that this triviality forces $Z$ to be a~torus and to lie in a leaf of a certain foliation on
$M$, the \emph{ant foliation}, and also to lie in a~fiber of an associated fibration of $X$, the \emph{ant fibration}; see
Section~\ref{subsec:ant}. As we will see, for most of the homogeneous models $X$ of interest in geometry, the ant fibration has only
single points as fibers, and hence $Z$ is a point. Since we can compute the ant fibration of $X$ without knowing $M$, this gives explicit
examples of models $X$ for which nothing can develop from any $M$ without rational curves. Nonetheless, we provide examples in which
the ant fibration has arbitrarily large fibers, so our main theorem (see Theorem~\ref{theorem:main}) is difficult to state.

Our first attempt to produce such an invariant fibration, with the required relationship to the ambient canonical bundle, gives us a
previously known fibration called the \emph{anticanonical fibration}; see Section~\ref{section:the.anticanonical.splitting}. We refine
this fibration to a previously undiscovered invariant fibration of complex homogeneous manifolds, the \emph{ant fibration}, which we
demonstrate is, in some examples, strictly finer, even though it has essentially all the same theorems as the anticanonical fibration; see
Section~\ref{subsec:ant}. We believe the ant fibration is of independent interest.

Computing the dimension of fibers of the ant fibration of a homogeneous model $X$ gives an upper bound on the dimension of varieties
$Z$ which can develop from any Cartan geometry with that model $X$, independent of choice of $M$.

The ant fibration of the model induces, via the Cartan geometry, a foliation of any complex manifold $M$ with any holomorphic Cartan
geometry, the \emph{ant foliation}. If $M$ has no rational curves, then we saw $M = M_0\times A$. Every leaf of the ant foliation lies
inside a fiber $\text{pt}\times A$; see Corollary~\ref{cor:splitting}. Noting that $A$ is the identity component of the
biholomorphism group of $M$, we see that fewer symmetries of the complex manifold $M$ imply smaller dimensions of subvarieties~$Z$
which develop to the model $X$, independent of choice of $X$. In particular, ``most'' compact complex manifolds $M$ with no rational
curves will only allow individual points $Z$ to develop to the model of any holomorphic Cartan geometry on $M$.

\section{Summary of the paper}

In Section~\ref{section:review}, we explain briefly the concept of Cartan geometry. This paper concerns only flat Cartan geometries, but
various of its lemmata are independently useful and so are stated in the generality of holomorphic Cartan geometries. We are hopeful that
the results generalize to arbitrary holomorphic Cartan geometries. Flatness might seem an extreme hypothesis, but at the moment all smooth
projective varieties which are known to admit a holomorphic Cartan geometry are known to admit a flat one, and many of our results only
require that there be a~holomorphic Cartan geometry and also some holomorphic flat connection on the tractor bundle.

In Section~\ref{subsection:Development}, we define the notion, in any Cartan geometry, of development of submanifolds on which curvature vanishes.
This first appeared in our previous work, but is a minor generalization of various well known definitions, as we explain.

Section~\ref{section:algebraic.geometry} summarizes, with complete references to standard textbooks, all of the theorems from algebraic geometry which we need, except for the theory of the Shafarevich fibration, which we will review in Section~\ref{section:Shafarevich}.

Section~\ref{section:rational.curves} summarizes our previous theorem which, roughly, says that we can assume that~$M$ contains no rational curves.
In other words, we can reduce the study of holomorphic Cartan geometries on
compact complex manifolds $M$ to the study of such geometries on those compact complex manifolds $M$ which contain no rational curves.
(In the statement of Theorem~\ref{theorem:main}, we unpack all of our results to see what they imply even in the presence of rational
curves, but the reader should feel free to suppose that there are no rational curves henceforth.)

Purely for the reader's curiosity, in Section~\ref{sec:Cartan.on.proj}, we give a survey of the main theorems known about Cartan
geometries on smooth projective varieties, explaining how various fundamental geometric conditions on a smooth projective variety
constrain their Cartan geometries.

Section~\ref{section:the.anticanonical.splitting} provides detailed definitions, proofs and examples of some invariant fibrations of complex homogeneous manifolds.
The reader recalls that a Cartan geometry is ``infinitesimally modelled'' on a homogeneous space.
So these fibrations determine holomorphic foliations on complex manifolds with holomorphic Cartan geometries.
We finish this section by describing the geometry of these foliations.

In Section~\ref{section:splitting}, we will see that, roughly speaking, the biholomorphisms of any compact complex manifold $M$ without rational curves arise from splitting $M$
into a product $M = M_0\times A$ of a~complex torus $A$, on which translations are biholomorphisms, and a complex manifold $M_0$ with
finite automorphism group; see Lemma~\ref{lemma:symmetry.splitting}.

Section~\ref{section:inf.alg.models} produces a class of homogeneous models, larger than merely complex algebraic homogeneous models, to which are results will still apply.
Our results use a great deal of algebraic geometry, so they do not apply to arbitrary homogeneous models $X = G/H$, but they hold for complex algebraic homogeneous models.

Section~\ref{section:stability} defines various notions of stability of holomorphic principal bundles.
We use stability in Section~\ref{section:bundles.on.tori} to relate existence of a holomorphic connection on a bundle over a torus and existence of a reduction of the Levi quotient bundle with a flat holomorphic connection.
We use this in Section~\ref{section:Cartan.torus.fib} to show that a holomorphic Cartan geometry on a torus fibration has Levi quotient pulled back from the base, and is flat on the fibers.
This seems to be of independent interest; it is not used further, but gives hope that some picture like that of a~Shafarevich fibration might occur more generally.
It is also of interest because the known examples of holomorphic Cartan geometries without rational curves, other than the locally Hermitian symmetric varieties, are torus fibrations.

In Section~\ref{section:Large.fundamental.groups}, we show that any rational map of a flat Cartan geometry whose typical fiber has solvable holonomy has trivial pullback canonical bundle on its typical fiber.
We use this in Section~\ref{section:Shafarevich} to show that the Shafarevich fibration is actually defined at every point of $M$, and maps to a smooth base with ample canonical bundle.
This is the most technically involved section of the paper.
The subsequent sections easily put together a picture of the varieties which develop to the model.

Section~\ref{section:Shafarevich} will review the well known theory of Shafarevich fibrations from algebraic geometry~\cite{Kollar:1993,Zuo:1999}.
This theory takes as input the group structure of the fundamental group of a~complex manifold and gives as output a holomorphic fibration.

\section{Review of the theory of Cartan geometries}\label{section:review}

\subsection{Definition}

In this section, we provide a complete definition of Cartan geometry and of parabolic geometry.
For introductions to Cartan geometries, see \cite[Chapter~1]{Cap/Slovak:2009}, \cite{mckay2023introduction,Sharpe:2002}.
Suppose that $G$ is a~complex Lie group acting holomorphically and transitively on a complex manifold $X$;
the pair~$(X, G)$ is called a \emph{complex homogeneous space}.
Let $H \subset G$ be the stabilizer of a point~$x_0 \in X$.
A holomorphic $\pr{X, G}$-geometry, also known as a holomorphic \emph{Cartan geometry} modelled on~\pr{X, G}, on a manifold $M$ is a
choice of holomorphic principal $G$-bundle $E_G \to M$ with a holomorphic connection
$\omega$ and holomorphic $H$-subbundle $E \subset E_G$ so that the tangent spaces of $E$ are complementary to the horizontal
spaces of the connection $\omega$ \cite{Sharpe:2002}.
The \emph{Cartan connection} is the restriction to $E$ of the connection $1$-form $\omega$ on $E_G$.
A Cartan geometry is \emph{effective} if the action of $G$ on $X$ is so.

The principal $H$-bundle $G \to X$ defined by $g \longmapsto gx_0$ is a Cartan geometry, called the \emph{model
geometry}, with the left invariant Maurer--Cartan $1$-form $g^{-1} {\rm d}g$ on $G$ as Cartan connection.

If $X' \subset X$ is a connected component and $G' \subset G$ is the subgroup preserving $X'$, then any \pr{X, G}-geometry
is precisely an \pr{X', G'}-geometry, so we can assume, without loss of generality, that the model $X$ of any Cartan geometry is connected.
A \emph{flag variety} is a complex homogeneous space $(X, G)$ for which $X$ is a connected and simply connected projective variety
and $G$ is a~complex semisimple Lie group; they are precisely the rational homogeneous projective varieties.
A \emph{parabolic geometry} is a Cartan geometry whose model is a flag variety \cite{Cap/Slovak:2009}.

\subsection{Dropping}
In this section, we will see that each Cartan geometry gives rise, by a trivial construction, to another one, on a higher-dimensional manifold, which we call the \emph{lift} \cite[Section 9]{mckay2023introduction}.
If a Cartan geometry arises from the lift of another, we say it \emph{drops} to that other Cartan geometry.

Suppose that $H \subset H' \subset G$ are closed complex subgroups of a complex Lie group $G$, with associated homogeneous
spaces $X := G/H$ and $X' := G/H'$, and that $(E, \omega) \to M'$ is an \pr{X', G}-geometry.
If we define $M := E/H$, then $E \to M$ is an \pr{X, G}-geometry with the same Cartan connection $\omega$;
we call it the \emph{lift} of $(E, \omega) \to M'$ \cite[Section~1.5.13]{Cap/Slovak:2009}.
(The lift is traditionally called the \emph{correspondence space} of $(E, \omega) \to M'$ \cite[Section~1.5.13]{Cap/Slovak:2009}, but this is an awkward fit to our applications of the idea, as we will see).
Note that $M \to M'$ is a holomorphic fiber bundle
with fiber $H'/H$, the same fiber as the $G$-equivariant holomorphic map $X \to X'$.
Conversely, a given $\pr{X, G}$-geometry \emph{drops} to a given $\pr{X',G}$-geometry if it is isomorphic to the $\pr{X, G}$-lift
of that $\pr{X', G}$-geometry.
(The notion of \emph{dropping} is fundamental to all of our work in this paper, more so than lifting. It would be awkward to use the expression \emph{is isomorphic to the correspondence space of} instead of \emph{drops to}.
Therefore, we adopt the terminology of \emph{lifting} and \emph{dropping} henceforth.)

\begin{Example}
If $X$ is a connected homogeneous $G$-space and $*$ is a point with trivial $G$-action, then an $(X, G)$-geometry on a
connected manifold drops to a $(*, G)$-geometry just when the geometry is isomorphic to its model.
\end{Example}

More generally, if a geometry on some complex manifold $M$ drops to some complex manifold~$M'$, then we can recover the complex manifold $M$ and the original geometry on $M$ directly from the geometry on $M'$; see \cite{Biswas.McKay:2016} for details and \cite{McKay2011b} for examples.
A \emph{minimal geometry} is a~holomorphic Cartan geometry which does not drop to a holomorphic Cartan geometry on any lower-dimensional manifold.

\subsection{From modules to vector bundles}
Any Cartan geometry is modelled on a homogeneous space.
In this section, we explain that various vector bundles naturally arise in any Cartan geometry, and that these vector bundles are modelled on homogeneous vector
bundles \cite[Section 10]{mckay2023introduction} on the homogeneous space.
Take a~Cartan geometry $E \to M$ modelled on a complex homogeneous space $(X, G)$ with~${X = G/H}$.
The principal $H$-bundle $E \to M$ and the Cartan connection determine the Cartan geometry,
as $E_G := \prodquot{E}{G}{H}$ and the Cartan connection extends uniquely to $E_G$ to become a holomorphic connection $1$-form.
To any holomorphic $H$-module $V$ we associate the holomorphic vector bundle~$\vb{V} := \prodquot{E}{V}{H} \to M$
whose global sections are the holomorphic $H$-equivariant maps~${E \to V}$.
We use the same symbol $\vb{V}$ for the associated vector bundle on the model $X$ as well.

\begin{Example} If $V := \LieG/\LieH$ equipped with the adjoint action of $H$, then $\vb{V} = \vbTM = TM$
\cite[Proposition 1]{mckay2023introduction}, \cite[Theorem 3.15]{Sharpe:1997}.
\end{Example}

\subsection{Development}\label{subsection:Development}
Suppose that $M$ is a complex manifold with a holomorphic Cartan geometry.
In this section, we consider all holomorphic maps to $M$, from any complex manifold, on which the pullback curvature of the Cartan geometry vanishes.
We call these maps \emph{pancakes}.
Such a map, in some sense, locally resembles a map to the model of the Cartan geometry, its \emph{development}.
All material in this section is explained in more detail and with complete proofs
in \cite[Section 7]{mckay2023introduction}.

Take a complex homogeneous space $(X, G)$ with marked point $x_0 \in X$.
Take a holomorphic~$(X, G)$-Cartan geometry $H \longrightarrow
E \to M$ with Cartan connection $\omega$.
A \emph{pancake} of the Cartan geometry is a holomorphic map $f \colon Z \longrightarrow
 M$ from a connected complex space so that the holomorphic connection $\omega$ on $E_G := \amal{E}{H}{G}$ pulls back to a flat
connection on $f^* E_G \to Z$. Pick a point $z_0 \in Z$.
\begin{Example}
Every holomorphic map $Z\to X$ of any complex manifold $Z$ to a homogeneous space $(X,G)$ is a pancake for the model Cartan geometry on $X$.
So maps $Z\to M$ which are not pancakes, roughly speaking, ``look different'' locally than maps $Z\to X$, hence the pancake condition.
\end{Example}
There is a unique unramified Galois covering
\[
p_Z \colon \ \big(\widehat{Z}, \widehat{z}_0\big) \to (Z, z_0)
\]
so that the flat connection has trivial holonomy precisely on covering spaces of $\widehat{Z}$.
Each choice of point \smash{$e_0 \in \pr{f \circ p_Z} ^* E_{\widehat{z}_0} = E_{z_0}$} gives an $H$-equivariant morphism $\widehat{f} \colon
 \pr{f \circ p_Z}^*E \to G$ of complex spaces, uniquely determined by
$
\widehat{f}\of{e_0} = 1
$
and $\widehat{f}^* g^{-1} {\rm d}g = \omega$ \cite[Section 2.3]{Biswas.McKay:2016}. In fact~$e_0$ produces an
isomorphism of $\pr{f \circ p_Z}^*E_G \to \widehat{Z}\times G$ of principal $G$-bundles, and
$\widehat{f}$ is the composition of maps
\[
\pr{f \circ p_Z}^*E \hookrightarrow \pr{f \circ p_Z}^*E_G = \widehat{Z}\times G \longrightarrow G.
\]
By $H$-equivariance, $\widehat{f}$ descends to a morphism
\begin{equation}\label{ed}
\widehat{f} \colon \ \big(\widehat{Z}, \widehat{z}_0\big)
\to (X, x_0) = (G/H, eH)
\end{equation}
of complex spaces; it is known as the \emph{developing map}.

Let
\[
\pi := \text{Gal}(p_Z) = \pi_1(Z)/\pi_1\bigl(\widehat Z\bigr)
\]
be the Galois group. The flat connection has holonomy morphism
$
h \colon \pi \to G
$
uniquely determined by $\widehat{f}\circ\gamma = h(\gamma)\widehat{f}$ for every $\gamma \in \pi$, where $\widehat{f}$ is the map in \eqref{ed}.
We have an exact sequence of group morphisms
\[
\begin{tikzcd}
1\ar[r]& \pi_1\bigl(\widehat{Z}, \widehat{z}_0\bigr) \ar[r]&\fundamentalGroup{Z, z_0}\arrow{r}{h}&G.
\end{tikzcd}
\]
The developing pair $\widehat{f}, h$ is \emph{associated} to the pancake $f \colon Z \to M$.
The pancake \emph{develops to the model} if $Z = \widehat{Z}$, i.e., if $\pi = 1$.

The group $\pi$ acts on $\widehat{f}^* G$ on the left by the holonomy morphism,
where $G$ is considered as a principal $H$-bundle on $X$ using the quotient map
$G \longrightarrow G/H$, and $\widehat{f}$ is the map in \eqref{ed}; quotienting: $f^*E \cong \big(\widehat{f}^*E\big)/\pi$, and $g^{-1} {\rm d}g$ descends to define a connection on $f^*E_G$, which actually coincides with $f^*\omega$. The developing pair
\[
\big(\widehat{f}, h\big), \qquad
\widehat{f} \colon \big(\widehat{Z}, \widehat{z}_0\big) \to (X, x_0),\qquad
h \colon\ \pi \to G
\]
together with choice of points $Z \ni z_0 \longmapsto m_0 \in M$ and the Cartan connection on $M$ determines uniquely
the original map $Z \to M$.

The developing pair of a pancake is uniquely determined by choice of point $e_0 \in \! \pr{f \circ p_Z} ^* E_{\widehat{z}_0}
 \!= E_{m_0}$. If we replace $e_0$ by another point $e_0 h_0$ for some $h_0 \in H$, then
\smash{$\big(\widehat{f}, h\big)$} gets replaced by \smash{$\big(h_0^{-1} \widehat{f}, h_0^{-1} h h_0\big)$}.
If we replace $z_0$ by another point of $Z$, similarly \smash{$\big(\widehat{f}, h\big)$} gets replaced by~\smash{$\big(g^{-1} \widehat{f}, g^{-1} h g\big)$} for some $g \in G$.
Every pancake has a unique developing pair, up to this right $G$-action \cite[Section~2.3]{Biswas.McKay:2016}.

The following pullback bundles have canonical isomorphisms of bundles on $\hat{Z}$ \cite[Section~2.3]{Biswas.McKay:2016}:
\begin{gather*}
\begin{split}
&f^* E \cong \big(\widehat{f}^* G\big)/\pi \text{ as holomorphic principal $H$-bundles}, \\
&f^* TM \cong \big(\widehat{f}^* TX\big)/\pi \text{ as holomorphic vector bundles}, \\
&f^* \pr{\amal{E}{H}{\LieG}} \cong \widehat{f}^* \pr{\amal{G}{H}{\LieG}}/\pi \text{ as holomorphic vector bundles with connection}.
\end{split}
\end{gather*}
If $Z$ develops to the model, these obviously become isomorphisms of bundles on $Z$:
\begin{gather*}
\begin{split}
&f^* E \cong \widehat{f}^* G \text{ as holomorphic principal $H$-bundles}, \\
&f^* TM \cong \widehat{f}^* TX \text{ as holomorphic vector bundles}, \\
&f^* \pr{\amal{E}{H}{\LieG}} \cong \widehat{f}^* \pr{\amal{G}{H}{\LieG}} \text{ as holomorphic vector bundles with connection}.
\end{split}
\end{gather*}
We will make crucial use of these isomorphisms below.

A submanifold $Z\subseteq M$ \emph{develops to the model} if $Z$ is a pancake and the inclusion map~${Z\!\to\! M}$ develops to the model.

For more on the story of development of curves to the model from flat Cartan geometries, see \cite[p.~3]{Goldman:2010}. For development to the model of curves in non-flat Cartan geometries, see
\cite[p.~108, 1.5.17]{Cap/Slovak:2009}, \cite[p.~54, 41X]{Cartan1935}, \cite{Ehresmann:1951}, \cite[p.~368]{Sharpe:1997}. For developments of higher-dimensional submanifolds, still only to the model, but only
in flat Cartan geometries (which suffices for our applications in this paper), see
\cite[Section~4]{KobayashiOchiai:1980}. The concept of development of curves was first used to develop
curves in one geometry to curves in another, with neither being assumed to be a~homogeneous model
\cite{Rinow1932}, but this was only carried out in Riemannian geometry to compare any two Riemannian
geometries on surfaces. For development of curves between arbitrary Cartan geometries with the same model,
the only reference is \cite[Section~17]{mckay2023introduction}. For development of higher-dimensional
submanifolds, in Cartan geometries which might not be flat, still only developing to the model, the only
references are \cite[Section~2.3]{Biswas.McKay:2016}, \cite[p.~43]{mckay2023introduction}, the second of
which gives complete proofs of the results we use here.

\section{Review of algebraic geometry}\label{section:algebraic.geometry}
In this section, we recall some theorems of complex algebraic geometry.
We were not able to find all of this information in a single source, so we need to give a summary of the results that we will use.
\subsection{Line bundles}
Suppose that $M$ is a compact irreducible reduced complex space.
The \emph{Iitaka dimension} of a~holomorphic line bundle $L$ on $M$ is the maximal dimension of the images of the rational maps
\[
\rationalMap{M }{ \Proj{}\cohomology{0}{M, kL}^*}
\]
taking a point $m \in M$ to the hyperplane in $\cohomology{0}{M, kL}$ consisting of sections vanishing at $m$, for $k = 1,
2, \dots$; see \cite[p.~50]{Ueno:1975}, \cite[Definition~2.1.3]{Lazarsfeld:2004}.
If $0 = \cohomology{0}{M, kL}$ for all $k > 0$, the Iitaka dimension is defined to be $-\infty$ \cite[p.~50]{Ueno:1975}, \cite[Definition~2.1.3]{Lazarsfeld:2004}.
The line bundle $L \to M$ is \emph{big} if the Iitaka dimension of $L$ is the dimension of $M$ \cite[Definition~2.2.1]{Lazarsfeld:2004}.
A nef line bundle~${L \to M}$ is big just when $c_1(L)^n > 0$, where $n := \dimC{M}$; see \cite[Theorem~2.2.16]{Lazarsfeld:2004}.
The \emph{numerical dimension} of $L \to M$ is the largest integer $k$ for which
$0 \ne c_1(L)^k \in \cohomology{2k}{M, \R{}}$, so a nef line bundle is big precisely when its numerical dimension equals the dimension of the complex manifold $M$ \cite[Remark~2.3.17]{Lazarsfeld:2004}.
The \emph{Kodaira dimension} $\kappa_M$ of $M$ is the Iitaka dimension of the canonical bundle of $M$ see \cite[p.~65]{Ueno:1975}.
The Kodaira dimension is at most the numerical dimension; a holomorphic line bundle is \emph{abundant} if its Kodaira and numerical dimensions are equal \cite[Remark~2.3.17]{Lazarsfeld:2004}.
The space $M$ is of \emph{general type} if its canonical bundle is big \cite[Example 2.2.2]{Lazarsfeld:2004}.
Any compact complex manifold with $c_1 < 0$ has general type, as does any modification of it.
\subsection{The Iitaka fibration}
An \emph{Iitaka fibration} of $M$ is a dominant rational map $\rationalMap{M}{\Ii{M}}$, whose generic fiber is smooth and irreducible, so that subvarieties of $M$ on which $\cb{M}$ has zero Iitaka dimension lie in the fibers and, on the very general fiber, $\cb{M}$ has zero Iitaka dimension \cite[Theorem~2.1.33]{Lazarsfeld:2004}.
Any projective variety of nonnegative Kodaira dimension has a unique Iitaka fibration up to birational isomorphism
\cite[Theorem~10.3]{Iitaka:1982} or \cite[Theorem~5.10]{Ueno:1975},
and its fibers are connected \cite[p.~124]{Lazarsfeld:2004}.
If some power of the canonical bundle is spanned by global sections, then there is a unique holomorphic Iitaka fibration, and its codomain is the image of the $\cb[p]{M}$-maps
\[
M \to \Proj{}\cohomology{0}{M,\cb[p]{M}}^*
\]
for all but finitely many integers $p > 0$ \cite[p.~133]{Lazarsfeld:2004}.
\subsection{Moishezon manifolds}
In this section we explain that our theorems actually hold for a larger class of complex manifolds.
All varieties in this paper are assumed complex.
A \emph{Moishezon manifold} is a connected compact complex manifold bimeromorphic to a smooth projective variety \cite[p.~26]{Ueno:1975}.
If a Moishezon manifold $M$ admits a holomorphic Cartan geometry then $M$ is a smooth projective variety; \cite[Corollary~2]{Biswas.McKay:2016}.
If the holomorphic Cartan geometry on $M$ drops to a complex manifold $M'$ then $M'$ is also a smooth complex projective variety; \cite[Corollary~2]{Biswas.McKay:2016}.
Hence the classification of holomorphic Cartan geometries on Moishezon manifolds follows immediately from the classification on smooth projective varieties, which itself follows immediately from the classification of minimal geometries on smooth projective varieties.
Consequently, Theorem~\ref{theorem:main} holds true with \emph{smooth projective variety} replaced by \emph{Moishezon manifold}.
On the other hand, there are compact K\"ahler manifolds which are not projective and which admit holomorphic flat Cartan geometries.
For example, complex tori admit flat holomorphic projective connections.

Throughout this paper, we try to prove results in the greater generality of compact K\"ahler manifolds, but we are not always able to, since many of our results use theorems of algebraic geometry which are not understood for compact K\"ahler manifolds.
We expect the results of this paper hold with little modification for compact K\"ahler manifolds.

\subsection{Stein manifolds}
A \emph{Stein manifold} is a complex manifold for which all cohomology in all positive degrees of all coherent sheaves vanishes \cite[p.~100]{Grauert/Remmert:2004}.
A complex manifold is Stein just when it admits a~holomorphic proper embedding into complex Euclidean space \cite[Theorem~5.3.9]{Hormander:1990}.
We will need to make use the existence of such an embedding for some complex Lie groups below, so we will need to know which complex Lie groups are Stein.

\subsection{Rational curves}\label{subsection:rational.curves}
A \emph{rational curve} in a complex manifold $M$ is a nonconstant holomorphic map $\Proj{1}\longrightarrow M$ from the Riemann sphere.
A complex manifold is \emph{uniruled} if a rational curve passes through its generic point \cite[p.~181]{Kollar:1996}.
A line bundle $L \to M$ on a projective variety is \emph{nef} if $\int_C c_1(L) \ge 0$ for all complete complex algebraic curves $C$ in $M$ \cite[Definition~1.4.1]{Lazarsfeld:2004}.
A projective variety is \emph{minimal} if it has nef canonical bundle \cite[p. 2]{Cascini:2013}.
By Mori's cone theorem \cite{Cascini:2013,Mori:1982}, every smooth projective variety with no rational curves is minimal.
The \emph{abundance conjecture} claims that, on any minimal projective variety, some power of the canonical bundle is spanned by its global sections \cite{Siu:2009}.

\subsection{Rational connectivity}
A rational curve in a complex manifold is \emph{tame} if it lies in a compact irreducible component of the Douady space of deformations \cite{Douady:1966}.
In any compact complex manifold dominated by a~compact K\"ahler manifold, every rational curve is tame \cite{MR715653,MR973802}; in particular, every rational curve in any smooth projective variety is tame.
A compact complex manifold is \emph{tamely rationally connected} if two general points of it lie on a connected finite union of tame rational curves \cite{Kollar:1996}.
\begin{Example}
Any connected compact complex manifold with $c_1 > 0$ is tamely rationally connected \cite{Campana:1992}.
\end{Example}
A rational curve $f \colon \mathbb{P}^1\longrightarrow M$ in a complex manifold $M$ is \emph{ample} if $f^*TM$ is a sum of line bundles of positive degree.
A smooth projective variety is tamely rationally connected just when it contains an ample rational curve \cite[Definition 1.1, Theorems 1.9,~3.7 and~3.10]{Kollar:1996}.
A complex manifold is \emph{tame} if its every rational curve lies in a compact irreducible component of its Douady deformation space \cite[p.~2]{Biswas.McKay:2016}.

Every smooth projective variety is tame, as its Douady space is its Hilbert variety \cite{Douady:1966}.
\subsection{Affine groups}\label{subsection:affine.groups}
In this section, we invent a new class of complex Lie groups which behave enough like linear algebraic groups that our theorems will easily extend to them.
A complex Lie group $G$ is \emph{affine} if some finite index subgroup of $G$ has a morphism of complex Lie groups to a complex linear algebraic group, which is an isomorphism on Lie algebras.
\begin{Example}
Any complex linear algebraic group is affine, and hence any complex semisimple Lie group is affine.
\end{Example}
\begin{Example}
Every covering group of any complex linear algebraic group is affine.
For example, if a complex linear algebraic group is connected, it admits a universal covering group.
Often even a disconnected complex linear algebraic group has a universal covering group \cite[Theorem~18.2.1, Example~18.2.2]{Hilgert.Neeb:2012}.
\end{Example}
\begin{Example}
Products of affine are affine.
\end{Example}
\begin{Example}
Any complex Lie group which admits a holomorphic representation, injective on its Lie algebra, as unipotent complex
linear transformations, is affine \cite[Corollary 14.38]{Milne:2017}.
\end{Example}
\begin{Example}
Any complex Lie group with finite fundamental group and finitely many components is affine
\cite[Corollary 16.3.9]{Hilgert.Neeb:2012}.
\end{Example}

Suppose that $G$ is affine, say with finite index subgroup $G_0 \subset G$ having morphism $G_0 \longrightarrow
\overline{G}_0$ to a complex linear algebraic group, inducing an isomorphism on Lie algebras.
If $G \to P \to M$ is a holomorphic principal $G$-bundle, then on the finite unramified covering
space \smash{$\widehat{M} := P/G_0$}, we have a holomorphic principal $G_0$-bundle $G_0 \to P
\to \widehat{M}$, and an associated principal $\overline{G}_0$-bundle $\overline{P} := \amal{P}{G_0}{\overline{G}_0}$.
Every holomorphic connection on $P \to M$ pulls back to a holomorphic connection on $P
 \longrightarrow \widehat{M}$, and conversely by averaging over the deck transformations,
every holomorphic connection on $P \to \widehat{M}$ induces a holomorphic connection on $P \to M$.
We can replace $G_0$ by a finite index subgroup, hence assume that $\overline{G}_0$ is connected.
So, up to finite covering, the existence of a holomorphic connection, or of a holomorphic flat connection, is the same for the
original bundle as for the associated bundle with connected complex linear algebraic structure group.

\section{Rational curves and Cartan geometries}\label{section:rational.curves}
We recall one of our earlier results, which is essential for the results below:
\begin{Theorem}[{\cite[Theorem 2]{Biswas.McKay:2016}}]\label{theorem:drop}
On a smooth projective variety $M$ bearing a holomorphic Cartan geometry, the following are equivalent:
\begin{enumerate}\itemsep=0pt
\item[$(1)$]
Some holomorphic Cartan geometry on $M$ is minimal.
\item[$(2)$]
Every holomorphic Cartan geometry on $M$ is minimal.
\item[$(3)$]
$M$ contains no rational curves.
\item[$(4)$]
$M$ is not uniruled.
\item[$(5)$]
$M$ is not the total space of a holomorphic fiber bundle, with positive-dimensional flag manifold fibers, over a
minimal smooth projective variety.
\item[$(6)$]
$M$ is minimal, i.e., has nef canonical bundle.
\end{enumerate}
\end{Theorem}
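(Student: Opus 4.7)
The plan is to route all six conditions through condition (3), that $M$ contains no rational curves, and to isolate the substantive ingredient as a single structural result tying rational curves to drops of the Cartan geometry. Several implications are elementary: (2)$\Rightarrow$(1) is trivial; (3)$\Rightarrow$(4) because uniruled varieties have rational curves through a generic point; (3)$\Rightarrow$(5) because flag bundles with positive-dimensional fibers contain rational curves; and (4)$\Leftrightarrow$(6) is classical minimal model theory, where Mori's cone theorem gives (6)$\Rightarrow$(3) and the Miyaoka--Mori numerical criterion for uniruledness gives (6)$\Rightarrow$(4). The implication (3)$\Rightarrow$(2) also fits here: any proper drop of a Cartan geometry on the projective $M$ to a lower-dimensional projective $M'$ has fibers $H'/H$ which are projective complex homogeneous manifolds, and once $G$ is (up to finite cover) complex linear algebraic, this forces $H'/H$ to be a flag variety; its rational curves lift to rational curves of $M$, contradicting (3).

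The substantive content is the converse: \emph{$M$ contains a rational curve} $\Longrightarrow$ \emph{every holomorphic Cartan geometry on $M$ admits a non-trivial drop, with flag-variety fibers, to a minimal base}, which simultaneously yields (1)$\Rightarrow$(3) and (5)$\Rightarrow$(3). The argument proceeds in three steps. First, any rational curve $f\colon \Proj{1}\longrightarrow M$ is automatically a pancake: over the simply connected $\Proj{1}$ every holomorphic principal bundle with holomorphic connection has trivial holonomy, so $f$ admits a single-valued developing map $\widehat f\colon \Proj{1}\longrightarrow X$, a rational curve in the model $X=G/H$. Second, every rational curve in the complex algebraic homogeneous $X=G/H$ lies in a fiber of a canonical $G$-equivariant fibration $X\longrightarrow X'=G/H'$ whose fibers $H'/H$ are flag varieties, namely the maximal rationally connected quotient of $X$, whose $G$-equivariance follows from its characterization by uniqueness together with the homogeneity of $X$. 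Third, by $G$-equivariance this fibration passes through the associated-bundle construction to produce a holomorphic fibration $M\longrightarrow M':=\amal{E}{H}{(G/H')}$ with flag-variety fibers, and the Cartan connection descends to a holomorphic $(X',G)$-Cartan geometry on $M'$ from which the original geometry on $M$ is lifted.

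The main obstacle is the second step: producing, for a general complex algebraic homogeneous $X=G/H$, the canonical $G$-equivariant flag-variety fibration that collects all rational curves. The tool is the maximal rationally connected quotient of Koll\'ar--Miyaoka--Mori, Campana, and Graber--Harris--Starr applied to a suitable projective completion of $X$, promoted to a $G$-equivariant morphism by the uniqueness characterization; its fibers are then identified as flag varieties using that a rationally connected projective homogeneous manifold is rational homogeneous. Once this is in place, iterating the construction on $M'$ strictly decreases dimension, so the process terminates in finitely many steps at a base with no rational curves, which is minimal by the already-established (3)$\Leftrightarrow$(6). The equivalence (1)$\Leftrightarrow$(2) then follows because the existence of any proper drop is encoded in the flag-bundle structure of $M$ itself, a property of the underlying complex manifold rather than of the particular Cartan geometry, so minimality of some Cartan geometry on $M$ is equivalent to minimality of all.
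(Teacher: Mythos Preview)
This theorem is not proved in the present paper: it is quoted verbatim from the authors' earlier work \cite[Theorem~2]{Biswas.McKay:2016}, with no argument given here. So there is no ``paper's own proof'' to compare your proposal against. Your outline does recover what is indeed the substantive mechanism behind the cited result---developing a rational curve in $M$ to a rational curve in the model $X$ (since curves are automatically pancakes and $\Proj{1}$ is simply connected), then using a $G$-equivariant rationally-connected quotient $X\to X'=G/H'$ with flag-variety fibers to drop the geometry---and the iteration to reach a minimal base is the right endgame.

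That said, one step in your sketch is not justified as written. In arguing (3)$\Rightarrow$(2) you assert that any drop $M\to M'$ has flag-variety fibers because $H'/H$ is ``projective complex homogeneous'' and $G$ is, up to finite cover, linear algebraic. The theorem as stated carries no algebraicity hypothesis on $G$, so you are importing an assumption. Even granting that $M'$ is projective (which the paper does record, again citing \cite{Biswas.McKay:2016}), a projective complex homogeneous manifold need not be a flag variety---abelian varieties are projective and homogeneous. What rules out torus fibers here is a further argument: a compact complex homogeneous fiber with no rational curves is, by the Tits/Borel--Remmert structure theory, complex parallelizable, and if it sits inside the K\"ahler manifold $M$ it must be a torus; one then has to exclude genuine torus-fibered drops, which is not automatic from what you wrote. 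In the linear-algebraic case your claim does go through (a complete quotient of a linear algebraic group is rational homogeneous), but you should flag that reduction explicitly rather than slip it in. Your final sentence, deducing (1)$\Leftrightarrow$(2) from ``the flag-bundle structure is a property of $M$ itself,'' is likewise circular until the preceding point is settled.
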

Therefore, the classification of holomorphic Cartan geometries on smooth projective varieties follows from the classification of minimal geometries on minimal smooth projective varieties containing no rational curves.

\section{Review of Cartan geometries on projective varieties}\label{sec:Cartan.on.proj}
We set the stage by summarizing some theorems about Cartan geometries on smooth projective varieties.

\subsection{Positive Ricci}
In this section, we summarize the state of the art about holomorphic Cartan geometries on compact complex manifolds with positive Ricci curvature.
\begin{Theorem}[{\cite[Corollary 3]{Biswas.McKay:2016}}]
Any tamely rationally connected compact complex manifold $M$ admits a holomorphic Cartan geometry, say with model $(X, G)$, if and only if $M$ is biholomorphic to $X$, in which case the geometry is isomorphic to the model geometry and $(X, G)$ is a flag variety.
\end{Theorem}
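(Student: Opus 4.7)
The plan is to leverage Theorem~\ref{theorem:drop}, which tightly characterizes when a holomorphic Cartan geometry on a smooth projective variety is minimal, together with the descent of rational connectivity under surjective morphisms. The reverse direction is immediate: every flag variety is rationally connected, and every rational curve in a smooth projective variety is tame, so if $M\cong X$ for a flag variety $(X,G)$ with its model geometry, then $M$ is tamely rationally connected.

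For the forward direction, suppose $M$ is a positive-dimensional tamely rationally connected smooth projective variety (the zero-dimensional case being trivial) carrying a holomorphic Cartan geometry with model $(X,G)$. Tame rational connectivity implies that $M$ contains an ample rational curve and is, in particular, uniruled; Theorem~\ref{theorem:drop} then forces the Cartan geometry on $M$ not to be minimal, and the failure of condition~(5) supplies a holomorphic fiber bundle $\pi\colon M\to S$ with positive-dimensional flag manifold fibers over a minimal smooth projective variety $S$, such that the Cartan geometry on $M$ is the lift of a minimal Cartan geometry on $S$. Re-applying Theorem~\ref{theorem:drop} to $S$ with its minimal geometry, condition~(3) gives that $S$ contains no rational curves.

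The crucial step is to conclude that $S$ is a single point. Rational connectivity descends along the surjection $\pi$: for any $s_1,s_2\in S$, lift to preimages $m_i\in\pi^{-1}(s_i)$, join them in $M$ by a connected chain of rational curves, and push the chain forward by $\pi$. Each rational curve in $M$ maps either to a point or to a rational curve in $S$; since $S$ has no rational curves, the image chain collapses to a single point, forcing $s_1=s_2$. Hence $S$ is a point, and by the lift construction, the Cartan geometry on $M$ is the lift to $M$ of the trivial $(*,G)$-geometry on a point, namely the principal bundle $E=G\to M=G/H=X$ with the Maurer--Cartan form as Cartan connection. This is by definition the model $(X,G)$-geometry on $X$; equivalently, the developing map identifies $M$ with $X$ and intertwines the two geometries. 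Since $M$ is a flag manifold fiber of $\pi$, so is $X$, and $(X,G)$ is a flag variety.

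The main obstacle is the step that $S$ reduces to a point, which combines descent of rational connectivity along $\pi$ with the absence of rational curves in $S$ coming from the Theorem~\ref{theorem:drop} equivalences applied to the minimal dropped geometry. A minor technicality is to verify that the resulting $(X,G)$ literally satisfies the paper's definition of a flag variety, in particular that $G$ is semisimple; this is arranged by replacing $G$ with the image of its action on the rational homogeneous projective variety $X$ and invoking standard structure theory of automorphism groups of such varieties.
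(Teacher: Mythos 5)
This statement is quoted from \cite[Corollary~3]{Biswas.McKay:2016}; the paper gives no proof of it, so there is nothing internal to compare against, and your reconstruction has to be judged on its own. The overall mechanism you use --- drop the geometry until it is minimal, push chains of rational curves forward to show the base is a point, then conclude the geometry is isomorphic to its model --- is the right one, but there are two genuine gaps. The most serious is that the statement concerns \emph{compact complex manifolds}, while Theorem~\ref{theorem:drop}, the engine of your whole argument, is stated only for smooth projective varieties. You silently pass to ``a tamely rationally connected smooth projective variety''. A tamely rationally connected compact complex manifold is not obviously Moishezon, let alone projective, and none of the results quoted in this paper let you make that reduction (the quoted Corollary~2 of \cite{Biswas.McKay:2016} needs Moishezon as input). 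In the source, the dropping theorem is proved directly for tame compact complex manifolds, which is how the general case is handled; your route simply does not reach it.

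Second, condition~(5) of Theorem~\ref{theorem:drop} is a statement about the manifold $M$ alone: its failure gives a fiber bundle $M\to S$ with flag fibers over a minimal base, but says nothing about the Cartan geometry being the lift of a \emph{minimal} geometry on that particular $S$. What the theorem actually gives you, via the failure of condition~(1), is that the geometry drops once to some lower-dimensional $M_1$; you must iterate (dimension decreases, so this terminates), re-verifying at each stage that the base is again projective and tamely rationally connected --- and your descent argument should be phrased for two \emph{general} points, with a check that general points of the base lift to general points of the total space, rather than for arbitrary $s_1,s_2$. The identification of the end of this tower with the $S$ of condition~(5), and of the composite fibers with flag varieties, is additional content of \cite{Biswas.McKay:2016} not contained in the quoted statement. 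Once these points are repaired, the endgame --- the base is a point, so the geometry drops to a $(*,G)$-geometry and is hence its own model, whence $M\cong X$ is a rational homogeneous projective variety --- is fine, modulo the effectivity caveat you already flag in order to conclude that $G$ is semisimple.
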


\subsection{Ricci flat}
In this section, we summarize the state of the art about holomorphic Cartan geometries on compact complex manifolds with zero Ricci curvature.
\begin{Theorem}[{\cite{Biswas.McKay:2010}}]\label{thm:cpt.Kaehler.c.1.0}
A compact connected K\"ahler manifold $M$ with $c_1(M)=0$ admits a holomorphic Cartan geometry if and only if it has an unramified covering by a complex torus.
\end{Theorem}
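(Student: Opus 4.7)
The plan is to handle the two implications separately, using Yau's theorem and the Beauville--Bogomolov decomposition for the nontrivial direction.

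For the easy implication, I would observe that a complex torus $T = \C^n/\Lambda$ carries the canonical translation-invariant flat holomorphic Cartan geometry modelled on $(\C^n, \operatorname{Aff}(n,\C))$. This geometry is preserved by every affine biholomorphism of $T$, so if $\widetilde M \to M$ is an unramified cover with $\widetilde M$ a complex torus, writing $M = \widetilde M/\Gamma$ for the finite deck group $\Gamma$ acting by affine biholomorphisms, the geometry descends along the quotient and equips $M$ with a holomorphic Cartan geometry.

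For the hard implication, suppose $M$ is compact connected K\"ahler with $c_1(M)=0$ and admits a holomorphic Cartan geometry modelled on some $(X,G)$ with $X = G/H$. Yau's solution of the Calabi conjecture supplies a Ricci-flat K\"ahler metric on $M$, and the Beauville--Bogomolov decomposition theorem then produces a finite unramified cover of the form
\[
\widetilde M \;=\; T \times \prod_{i} C_i \times \prod_{j} H_j,
\]
where $T$ is a complex torus, each $C_i$ is a simply connected strict Calabi--Yau factor (restricted holonomy $\operatorname{SU}(n_i)$), and each $H_j$ is a simply connected irreducible holomorphic symplectic factor (restricted holonomy $\operatorname{Sp}(m_j)$). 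The pullback of the Cartan geometry is a holomorphic Cartan geometry on $\widetilde M$, so it suffices to prove that no positive-dimensional simply connected compact Ricci-flat K\"ahler factor of strict CY or IHS type carries a holomorphic Cartan geometry. Once each $C_i$ and $H_j$ is ruled out, $\widetilde M = T$ is itself a complex torus, which is the desired conclusion.

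The crux is the rigidity for such a factor $N$. My plan is to exploit the Bochner principle: on a compact K\"ahler Ricci-flat manifold every global holomorphic tensor field is parallel for the Levi-Civita connection. The Cartan curvature is a global holomorphic section of $\Lambda^2 T^*N \otimes (E_G\times_G \LieG)$, and the identification $TN \cong E\times_H (\LieG/\LieH)$ provided by the Cartan connection puts a holomorphic $H$-reduction on the frame bundle of $N$. Moreover the principal bundle $E_G$ carries the holomorphic Cartan connection itself, so by Atiyah's theorem its Atiyah class vanishes and every characteristic class coming from an invariant polynomial on $\LieG$ vanishes. The Bochner principle then forces the Cartan curvature to be parallel. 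The restricted holonomy of the K\"ahler metric acts irreducibly on $TN$ with endomorphism algebra $\C$ (CY case) or $\C\oplus\C$ (IHS case), so Schur's lemma applied to the parallel curvature and to the $H$-reduction should collapse the Cartan geometry: the curvature must vanish, the resulting flat Cartan geometry on the simply connected $N$ admits a developing local biholomorphism $N \to X$, and compactness of $N$ combined with the irreducibility of the holonomy reduction forces $\dim N = 0$. The main obstacle is this last rigidity step, namely reconciling the a priori unrelated $H$-reduction of the Cartan geometry with the $\operatorname{SU}$- or $\operatorname{Sp}$-reduction of the Ricci-flat metric; the Bochner principle supplies the right source of parallelness, but turning parallelness of the Cartan curvature into genuine flatness and then into a contradiction with compactness of $N$ requires careful control of the irreducibility of the holonomy action and of the algebraic structure of the model $(X,G)$.
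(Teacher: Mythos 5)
This theorem is only \emph{cited} in the present paper (from \cite{Biswas.McKay:2010}); no proof is given here, so your proposal has to stand on its own. The easy direction is essentially fine once you pass to the Galois closure of the covering (a finite \'etale cover of a complex torus is again a complex torus, and every biholomorphism of a complex torus is affine), so that $M$ really is a quotient of a torus by a finite group of affine transformations and the translation-invariant flat geometry descends.

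The hard direction has two genuine gaps. First, the reduction to factors does not work: a holomorphic Cartan geometry on the product $\widetilde M=T\times\prod_iC_i\times\prod_jH_j$ does \emph{not} induce one on any factor. The Cartan connection is an absolute parallelism $TE\cong E\times\LieG$, and restricting $E$ to a slice $\{t\}\times C_i\times\cdots$ produces a total space of dimension strictly less than $\dim G$, so the complementarity condition in the definition fails; hence ruling out Cartan geometries on each $C_i$ and $H_j$ separately would not rule out one on $\widetilde M$. Second, the crux --- flatness --- is never established. The Bochner principle on a compact Ricci-flat K\"ahler manifold applies to holomorphic sections of bundles built functorially from $TN$; the Cartan curvature lives in $\Lambda^2T^*N\otimes\operatorname{ad}(E_G)$, where $\operatorname{ad}(E_G)=E\times_H\LieG$ is an auxiliary bundle carrying no given connection compatible with the Levi-Civita connection, so ``parallel'' is not even defined for it and Schur's lemma has nothing to act on. You flag this yourself as ``the main obstacle'', but flagging it is not overcoming it, and it is exactly where the content of the theorem sits. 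The argument of \cite{Biswas.McKay:2010} instead stays on $M$ and works with Chern classes: the extended bundle $E_G$ carries a holomorphic connection, so by Atiyah's theorem the rational Chern classes of $\operatorname{ad}(E_G)$ vanish; feeding this into the exact sequence $0\to E\times_H\LieH\to E\times_H\LieG\to TM\to 0$ together with the polystability of $TM$ furnished by Yau's Ricci-flat metric yields $\int_M c_2(TM)\wedge\omega^{n-2}=0$, which forces the Ricci-flat metric to be flat; Bieberbach's theorem then produces the torus covering, with no need for the Beauville--Bogomolov decomposition or any restriction to factors.
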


\begin{Theorem}[{\cite{Biswas.Dumitrescu:2017}}]\label{theorem:BD.torus}
If the group $G$ of a complex homogeneous space $(X, G)$ is affine, then every holomorphic $(X, G)$-geometry on any complex
torus is translation invariant.
\end{Theorem}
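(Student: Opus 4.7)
The plan is to use the affine hypothesis to reduce to the case of a connected complex linear algebraic structure group, and then to verify translation invariance of the Cartan datum in three successive stages: the underlying principal $G$-bundle, the Cartan connection on it, and the $H$-reduction. By the discussion of affine groups in Section~\ref{subsection:affine.groups}, after passing to a finite unramified cover of~$M$ (which is again a complex torus) and replacing~$G$ by its associated connected complex linear algebraic group, I may assume~$G$ is itself a connected complex linear algebraic group; the resulting Cartan geometry is equivalent to the original for the purposes of translation invariance.

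Next, I would establish translation invariance of the pair $\pr{E_G,\omega}$, where $E_G = E \times^H G$ carries the holomorphic connection induced by~$\omega$. Atiyah's theorem forces all Chern classes of~$E_G$ to vanish, and for~$G$ connected complex linear algebraic one then invokes the structure theory of holomorphic principal bundles on complex tori admitting a holomorphic connection: handled stepwise via the Levi decomposition of~$G$, such a bundle is translation invariant, with the reductive quotient coming from a representation of $\pi_1(M) = \Lambda$ and the unipotent radical appearing as an iterated extension by copies of~$\mathcal{O}_M$, each of which is itself translation invariant. A gauge argument, exploiting that holomorphic $1$-forms on~$M$ with values in a translation-invariant adjoint bundle are themselves translation invariant, promotes bundle translation invariance to translation invariance of~$\omega$.

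Finally, I would show that the $H$-reduction $E \subset E_G$ is preserved by the lifted $M$-action. Pulling back everything to the universal cover $V = \mathbb{C}^n$ and trivializing so that~$\omega$ becomes the Maurer--Cartan form, the $H$-reduction corresponds to a holomorphic map $\phi\colon V \to X = G/H$; the Cartan condition that the restriction of~$\omega$ to~$TE$ be a fiberwise isomorphism onto~$\mathfrak{g}$ forces $\mathrm{d}\phi$ to have a prescribed, $H$-equivariant pointwise form, and together with $\Lambda$-equivariance this rigidity forces~$\phi$ to intertwine $V$-translations with the $G$-action on~$X$, whence~$E$ descends to a translation-invariant $H$-reduction on~$M$. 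I expect the main obstacle to be the second stage: proving translation invariance of the (generally non-flat) holomorphic connection~$\omega$, and not merely of the underlying bundle~$E_G$, demands a delicate gauge argument on top of the structure-theoretic analysis, because holomorphic connections on a translation-invariant bundle form a nontrivial affine space and the canonical gauge must exploit the full Cartan-geometric rigidity of the datum.
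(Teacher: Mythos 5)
The paper does not actually prove this statement: Theorem~\ref{theorem:BD.torus} is imported verbatim from \cite{Biswas.Dumitrescu:2017}, so there is no internal proof to compare against. Judged on its own terms, your first two stages are broadly on the right track. The reduction via the affine hypothesis to a connected linear algebraic structure group matches Section~\ref{subsection:affine.groups} (though you should note that translation invariance established on a finite covering torus must be averaged over the deck group before it descends). The correct packaging of your second stage is Lemma~\ref{lemma:BG}: a holomorphic principal bundle on a torus admitting a holomorphic connection is pseudostable with vanishing Chern classes, carries a unique flat holomorphic connection $\nabla^0$, and --- crucially --- every holomorphic section of every associated \emph{vector} bundle is $\nabla^0$-parallel. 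Since $\omega-\nabla^0$ is a holomorphic section of $\operatorname{ad}(E_G)\otimes\Omega^1_M$ and $\Omega^1_M$ is trivial, translation invariance of $\omega$ follows at once; your ``delicate gauge argument'' is exactly this parallelism statement.

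The genuine gap is in your third stage. You propose to trivialize the pullback of $E_G$ to the universal cover $V$ ``so that $\omega$ becomes the Maurer--Cartan form''. That is possible only when $\omega$ is flat, which is not a hypothesis: the theorem covers non-flat geometries (translation-invariant but curved holomorphic affine connections on tori already give examples), so this step as written fails. The trivialization must instead be taken with respect to the auxiliary flat connection $\nabla^0$, relative to which $\omega$ differs from the Maurer--Cartan form by a parallel $\mathfrak{g}$-valued $1$-form. More seriously, even after that correction, your claim that the pointwise Cartan condition on ${\rm d}\phi$ ``together with $\Lambda$-equivariance forces $\phi$ to intertwine translations with the $G$-action'' is asserted rather than proved: the Cartan condition only makes $\phi\colon V\to G/H$ a local biholomorphism onto its image, and there are plenty of non-equivariant maps with that property. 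The missing idea is that the $H$-reduction is a holomorphic section of the associated fiber bundle $E_G/H\to M$ with fiber $G/H$, which is \emph{not} a vector bundle, so the parallelism statement of Lemma~\ref{lemma:BG} does not apply to it directly; one must use the algebraicity of $H\subset G$ (Chevalley's theorem) to embed $G/H$ equivariantly into a $G$-module, or the projectivization of one, transfer the reduction into a holomorphic section of an associated vector bundle, and only then conclude that it is parallel, hence translation invariant. Without that step the argument does not close.
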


\subsection{Semipositive Ricci}
In this section, we summarize the state of the art about holomorphic Cartan geometries on compact complex manifolds with semipositive Ricci curvature.
\begin{Theorem}[{\cite[Theorem 2]{McKay:2016}}]\label{theorem:semipos}
Suppose that $M$ is a connected compact K\"ahler manifold with~${c_1 \ge 0}$.
After perhaps replacing $M$ by a finite unramified covering space, every holomorphic Cartan geometry on $M$ is the lift of a Cartan geometry on a complex torus, and in particular~$M$ is a holomorphic fiber bundle, with flag variety fibers, over a complex torus.
\end{Theorem}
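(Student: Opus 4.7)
The plan is to combine the drop theorem for rational curves (Theorem~\ref{theorem:drop}) with the Ricci-flat classification (Theorem~\ref{thm:cpt.Kaehler.c.1.0}) to reduce $M$, after a finite unramified cover, to a flag bundle over a complex torus on which the Cartan geometry is pulled back from the torus.

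First I would apply Theorem~\ref{theorem:drop} to $M$. If $M$ contains no rational curves, then $M$ is minimal, so $K_M$ is nef and $c_1(M) \le 0$; combined with the hypothesis $c_1(M) \ge 0$ this forces $c_1(M) = 0$, and Theorem~\ref{thm:cpt.Kaehler.c.1.0} exhibits a finite unramified torus cover of $M$ itself, with the flag variety fiber reduced to a point. Otherwise $M$ is the total space of a holomorphic fiber bundle $\pi\colon M \to M_1$ with positive-dimensional flag variety fibers, $M_1$ is a minimal smooth projective variety, and the Cartan geometry on $M$ lifts from a holomorphic Cartan geometry on $M_1$. A second application of Theorem~\ref{theorem:drop}, now to the Cartan geometry on $M_1$, shows that since $M_1$ is minimal it contains no rational curves and $K_{M_1}$ is nef.

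Second, I would propagate the condition $c_1 \ge 0$ from $M$ down to $M_1$. Since the fibers of $\pi$ are flag varieties, they are Fano, the relative anticanonical $-K_{M/M_1}$ is $\pi$-ample, and $\pi$ admits holomorphic sections over every curve $C \subseteq M_1$ (by Graber--Harris--Starr, or equivalently by Tsen's theorem applied to the generic fiber). Restricting the class $-K_M = -\pi^* K_{M_1} - K_{M/M_1}$ to such a section and controlling the relative contribution $K_{M/M_1}$ along the section, I would deduce $-K_{M_1} \cdot C \ge 0$ for every curve $C \subseteq M_1$, hence $c_1(M_1) \ge 0$. Combined with $K_{M_1}$ nef from the first step, this forces $c_1(M_1) = 0$. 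Then Theorem~\ref{thm:cpt.Kaehler.c.1.0} applied to $M_1$ gives a finite unramified torus cover $T \to M_1$, and pulling back $\pi$ along $T \to M_1$ yields a finite unramified cover of $M$ that is a holomorphic flag bundle over $T$; the Cartan geometry on this cover is the lift of the pulled back Cartan geometry on $T$, completing the reduction.

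The principal obstacle is the second step: cleanly transferring the semipositivity of $-K_M$ across the flag bundle $\pi$ in a potentially non-projective K\"ahler setting, since the relative anticanonical is only $\pi$-ample rather than globally nef, and the two contributions to $-K_M$ must be balanced carefully along sections whose own degrees depend on the choice of section. An alternative route that sidesteps this difficulty is to invoke the structure theorem of Cao (and Cao--H\"oring) for compact K\"ahler manifolds with nef anticanonical bundle, which realizes the Albanese map as a locally trivial fibration with rationally connected fibers; those fibers would then be identified with the flag variety fibers supplied by Theorem~\ref{theorem:drop}, and the conclusion would follow directly.
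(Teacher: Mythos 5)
This statement is quoted from \cite[Theorem 2]{McKay:2016}; the paper gives no proof of it, so your argument can only be judged on its own terms, and as written it has two genuine gaps. The first and decisive one is that your entire first step rests on Theorem~\ref{theorem:drop}, which is stated (and proved, using Mori's cone theorem and projectivity of Moishezon manifolds with Cartan geometries) only for \emph{smooth projective varieties}, whereas Theorem~\ref{theorem:semipos} is about arbitrary connected compact K\"ahler manifolds. This is not a removable technicality: the manifolds appearing in the conclusion of Theorem~\ref{theorem:semipos} --- flag-variety bundles over complex tori --- are typically \emph{not} projective (take any non-algebraic torus as base), so your proof cannot even treat the examples the theorem is describing. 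In the non-projective K\"ahler setting you have neither ``no rational curves $\Rightarrow K_M$ nef'' nor the dichotomy of item $(5)$ of Theorem~\ref{theorem:drop} producing the flag bundle $M\to M_1$ and the drop of the geometry.

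The second gap is the one you flagged yourself: the descent of $c_1\ge 0$ from $M$ to $M_1$. Writing $-K_M=-\pi^*K_{M_1}-K_{M/M_1}$ and restricting to a section $\sigma$ over a curve $C$ gives only $-K_{M_1}\cdot C\ \ge\ K_{M/M_1}\cdot\sigma(C)$, and the right-hand side has no a priori sign for an arbitrary section, since $-K_{M/M_1}$ is merely $\pi$-ample; one would have to produce a section (e.g., via a canonical reduction of structure group to the parabolic) on which the anticanonical character has non-positive degree, and none of this machinery (Graber--Harris--Starr, Tsen) is available outside the algebraic category anyway. Your fallback via the Cao--H\"oring structure theorem for nef anticanonical bundle is a more promising route in the K\"ahler setting, but it does not close the argument as stated: identifying the rationally connected fibers of that fibration ``with the flag variety fibers supplied by Theorem~\ref{theorem:drop}'' again presupposes the projective statement, and since Cartan geometries do not restrict to submanifolds, one must separately argue that the geometry drops along that fibration rather than merely that the fibers are flag varieties. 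Until the projectivity issue is addressed head-on (or the drop theorem is extended to compact K\"ahler manifolds), the proposal does not prove the statement.
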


\section{The anticanonical splitting}\label{section:the.anticanonical.splitting}
\subsection{The anticanonical fiber bundle}
In this section, we define the anticanonical fibration of a complex homogeneous space $X$, which is a fibration invariant under automorphisms; the anticanonical fibration was previously considered by~\cite{Huckleberry:1986}, but the results below are new.
We prove that every subvariety of a complex homogeneous space on which the ambient canonical bundle is trivial lies in a fiber of the anticanonical fibration.
We will subsequently apply this to subvarieties which are developed to $X$ from some Cartan geometry.

For any normal complex space $X$ and holomorphic line bundle $L \to X$, let
\[
V := \cohomology{0}{X, L},
\]
which might be an infinite-dimensional complex vector space.
Let $X' \subset X$ be the points of $X$ where every holomorphic section of $L$ vanishes.
If $\dimC{ V} > 0$, define the $L$-map
\[
X\setminus X' \longrightarrow \mathbb{P}V^*, \qquad x \longmapsto \{s \in V\mid s(x)=0\}.
\]
If $X' \subset X$ contains no component of $X$, denote the image of the $L$-map by $\Iitaka[L]{X}$ and the $L$-map
by~$\rationalMap[\Iitaka[L]{}]{X}{\Iitaka[L]{X}}$.
For any two points $x_0, x_1 \in X\setminus X'$ in the same fiber, we can pick a~vector~${v_0 \in L_{x_0}}$, and a global
section $s$ with $s(x_0) = v_0$, and hence define $v_1 := s(x_1)$, a natural linear isomorphism $L_{x_0}
 \cong L_{x_1}$, i.e., $L$ is trivial along the fibers of $\rationalMap[\Iitaka[L]{}]{X}{\Iitaka[L]{X}}$.
For every complex homogeneous space \pr{X, G} and $G$-invariant line bundle $L \to X$, the map $\rationalMap[\Iitaka[L]{}]{X}{\Iitaka[L]{X}}$ is $G$-equivariant.
The map is everywhere defined, by $G$-equivariance, as long as $L\to X$ has at least one holomorphic section not everywhere vanishing.
The fibers of $X \to \Iitaka[L]{X}$ are the equivalence classes: $x \sim x'$ just when every holomorphic section of $L$ which vanishes at $x$ also vanishes at $x'$ and vice versa.
The sections vanishing at $x$ have common vanishing locus a subvariety $Z_x$, and each fiber $F = X_{\overline{x}_0}$ is
\[
F = \bigcap_{x \in F} Z_x,
\]
a closed subvariety acted on by the normalizer of $H \subset G$.
The fiber $F$ is a closed complex analytic subvariety, so the stabilizer of $F$ is a closed complex subgroup $\Iitaka[L]{H}
 \subset G$, and therefore
 \[\Iitaka[L]{X} = G/\Iitaka[L]{H}\]
 is a complex homogeneous space and $X \to \Iitaka[L]{X}$ is the \emph{$L$-fiber bundle}.
The line bundle $L$ is pulled back by $\rationalMap[\Iitaka[L]{}]{X}{\Iitaka[L]{X}}$ from a line bundle on $\Iitaka[L]{X}$.
In particular, we will consider $L = \acb{X}$, i.e., $X \longrightarrow \acf{X}$, the \emph{anticanonical fiber bundle}.
Note that $X$ has global nonzero sections of its anticanonical bundle, as we can wedge together vector fields from the $G$-action.
In particular, the anticanonical fiber bundle is a well defined regular morphism.
\begin{Example}
Some easy examples, which essentially exhaust the complex homogeneous spaces in low dimension \cite{McKay:2015}:
\begin{enumerate}\itemsep=0pt
\item[(1)]
Any flag variety $X = G/P$ has positive anticanonical bundle, so $\acf{X} = X$.
\item[(2)]
If $X = \Proj{1} \times \C \times C$ where $C = \C/\Lambda$ is an elliptic curve, then $\acf{X}
 = \Proj{1} \times \C$ giving the fiber bundle $C \to X \to \acf{X}$.
\item[(3)]
If $(X, G)$ is a homogeneous Hopf manifold of dimension $n$ then $\acf{X} = X/\C^{\times} = \Proj{n-1}$.
To be precise,
\[X = (\C^{n}-\{0\})/(z\sim\lambda z)\]
 for some $\lambda \in \C^{\times}$ with $|\lambda| \ne 1$ (which
can be chosen arbitrarily to construct a homogeneous Hopf manifold), and $H^0(X, -K_X)$ consists of the tensors
\[
p(z)\partial_1\wedge\dots\wedge\partial_n
\]
for any polynomial $p(z)$ homogeneous of degree $n$.
Such a tensor, by homogeneity, vanishes on a set of complex lines through the origin in $\C^{n}$.
Hence the anticanonical fibers are the quotients of these complex lines, i.e., the fibers of the Hopf fibration $X \longrightarrow \acf{X}
 = \Proj{n-1}$, and are elliptic curves $\C^{\times}/\langle \lambda\rangle$.
\item[(4)]
The total space $X = \OO[n]$ of the line bundle $\OO[n] \to \Proj{1}$ for $n \ge 1$ has $\acf{X} = X$.
\item[(5)]
There is one nontrivial holomorphic fiber bundle $\C^{\times} \to X \to \C^{\times}$, and it admits infinitely many holomorphic faithful group actions of connected complex Lie groups \cite{McKay:2015}.
Its anticanonical fibration is $\C^{\times} \to X \to \C^{\times} = \acf{X}$.
\end{enumerate}
\end{Example}

\begin{Lemma}\label{lemma:all.points}
For any complex homogeneous space \pr{X, G}, any vector field on $X$ arising from an element of the Lie algebra of $G$ vanishes
at a point $x \in X$ just when it vanishes at all points of the anticanonical fiber through $x$.
\end{Lemma}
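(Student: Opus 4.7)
The plan is to prove the nontrivial direction (vanishing at one point forces vanishing on the whole fiber) by building enough anticanonical sections out of fundamental vector fields to detect $\xi_X$ along the fiber, and then to invoke the defining property of the anticanonical fiber. The ``only if'' direction is of course trivial.

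First I would write $n := \dimC{X}$ and, given $\xi \in \LieG$ with $\xi_X(x) = 0$, form for every choice of $\xi_2,\dots,\xi_n \in \LieG$ the global holomorphic polyvector field
\[
s \;:=\; \xi_X \wedge \xi_{2,X} \wedge \cdots \wedge \xi_{n,X} \;\in\; \cohomology{0}{X,-K_X},
\]
using $-K_X = \wedge^n TX$. Because $\xi_X$ vanishes at $x$, the section $s$ vanishes at $x$. By the characterization of the anticanonical fiber recalled immediately before the statement --- namely that a global section of $-K_X$ vanishing at $x$ vanishes identically on $F_x$, the fiber through $x$ of $X \to \acf{X}$ --- we conclude $s(y) = 0$ for every $y \in F_x$.

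Next, fix $y \in F_x$. Since $G$ acts transitively, the evaluation map $\LieG \to T_y X$, $\zeta \longmapsto \zeta_X(y)$, is surjective. Given any codimension-one subspace $V \subset T_y X$, I can therefore pick $\xi_2,\dots,\xi_n \in \LieG$ so that $v_i := \xi_{i,X}(y)$ is a basis of $V$. The identity $s(y) = 0$ then reads $\xi_X(y) \wedge v_2 \wedge \cdots \wedge v_n = 0$ in $\wedge^n T_y X$, which forces $\xi_X(y) \in V$. Since the intersection of all codimension-one subspaces of $T_y X$ is zero, $\xi_X(y) = 0$, as required.

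I do not anticipate any serious obstacle: the argument is a direct matching of the homogeneity of the evaluation map with the defining property of the anticanonical fiber. The only mild points to verify are the edge case $n = 1$ (where $s = \xi_X$ itself is a section of $TX = -K_X$ and the conclusion is immediate from the definition of $F_x$), and the trivial case $n = 0$, in which the statement is vacuous. Everything else is bookkeeping inside $\wedge^n T_y X$.
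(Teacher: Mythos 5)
Your proof is correct and is essentially the paper's own argument, just phrased directly rather than contrapositively: both wedge the given fundamental vector field with $n-1$ others from $\Lie{G}$ to obtain a global anticanonical section vanishing at $x$, and then use the defining property of the anticanonical fiber together with transitivity of the evaluation map $\Lie{G}\to T_yX$ to conclude. No gaps.
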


\begin{proof}
Denote by $\LieG$ the Lie algebra of $G$.
Pick a vector field $v \in \LieG$ and points $x, y \in X$ so that~${v(x) = 0}$ and $v(y) \ne 0$.
Let $v_1 := v$.
Since $G$ acts transitively on $X$, we can find vector fields $v_2, v_3, \dots, v_n$ so that $v_1, v_2,
 \dots, v_n$ are linearly independent at $y$.
By slight perturbation, we can arrange that $v_2, v_3, \dots, v_n$ are linearly independent at $x$.
Then $v_1 \wedge \dots \wedge v_n$ vanishes at~$x$ but not $y$.
So $x$ and $y$ lie in different fibers of the anticanonical fibration.
\end{proof}

Consequently, if we take a point $x \in X$ then its anticanonical fiber $F$ has vector fields
\[
\LieG \longrightarrow \LieG/\LieG^x \subset \cohomology{0}{F, TX}, \qquad
v \longmapsto v|_F
\]
giving a canonical trivialization of $ TX|_F$ along each anticanonical fiber $F$.
A vector field from~$\LieG$ vanishes at $x_0$ just when it belongs to $\LieH$ and just when it vanishes on the anticanonical
fiber~${F = \acf{H}x_0}$.
So $\LieH$ is precisely the subalgebra of $\LieG$ acting trivially on $\acf{H}x_0 = \acf{H}/H$.
Denote by $\acf\LieH$ the Lie algebra of $\acf{H}$; so $\LieH \subset \acf\LieH$ is an ideal.

The elements of $\acf{H}$ leaving every point of $F$ fixed form a normal subgroup of $\acf{H}$, containing $H^0$ and having Lie algebra $\LieH$.
So this normal subgroup has identity component $H^0$, and hence $H^0 \subseteq \acf{H}$ is a normal subgroup.
Thus
\[
H/H^0 \subset \acf{H}/H^0
\]
is a discrete group, giving the fiber $F = \acf{H}x_0$ the form
\[
F = \pr{\acf{H}/H^0}/\pr{H/H^0}
\]
of a quotient space of $\acf{H}/H^0$ by the action of a discrete group $H/H^0$.
In particular, the vector fields $\acf\LieH$ are a canonical $G$-invariant trivialization of the tangent bundle of each anticanonical fiber.

If $f \colon X \to \C$ is a holomorphic function with $f(x_0) \ne 0$ and $s$ is a section of the anticanonical bundle not vanishing at some point $x_0 \in X$, then $fs$ is another such section, so $f\ne 0$ on the fiber of $x_0$.

\begin{Lemma}
Take a complex homogeneous space $X = G/H$.
The identity component $\big(\acf{H}\big)^0 \subset \acf{H}$ lies in the identity component $N^0$ of the
normalizer $N := N_G H^0$ of the identity component $H^0 \subset H$.
\end{Lemma}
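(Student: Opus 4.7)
The plan is to show that $\acf{H}$ normalizes $H^0$, so that $\acf{H} \subseteq N$, and then use the standard fact that any connected subgroup of a Lie group lies in the identity component.

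First I would invoke the paragraph immediately preceding the statement: the set of elements of $\acf{H}$ which fix every point of the anticanonical fiber $F = \acf{H}x_0$ is a closed normal subgroup of $\acf{H}$ whose identity component is $H^0$. In particular, $H^0$ is a normal subgroup of $\acf{H}$, so conjugation by any element of $\acf{H}$ preserves $H^0$. This means $\acf{H} \subseteq N_G H^0 = N$.

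Next I would take the identity component $\bigl(\acf{H}\bigr)^0$, which by the previous step is a connected subgroup of $N$. Since the identity component $N^0$ of any Lie group $N$ is open (and hence closed) in $N$ and contains the identity, any connected subset of $N$ containing the identity lies in $N^0$. Applying this to the connected subgroup $\bigl(\acf{H}\bigr)^0$ yields $\bigl(\acf{H}\bigr)^0 \subseteq N^0$.

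There is essentially no obstacle here; the whole content of the lemma is the normality of $H^0$ inside $\acf{H}$, which was already extracted from the trivialization discussion preceding the statement. The proof is a one-line combination of that normality with the elementary topological fact about identity components.
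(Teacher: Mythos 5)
Your proof is correct, but it takes a genuinely different route from the paper's. The paper argues infinitesimally: it picks a section $s = v_1\wedge v_2\wedge\dots\wedge v_n$ of the anticanonical bundle vanishing at $x$, uses $\acf{H}$-invariance of its zero locus to show that $\LieDer_w s$ vanishes at $x$ for every $w\in\acf\LieH$, and concludes $\LieDer_w\LieH\subseteq\LieH$, i.e., $\acf\LieH\subseteq N_{\LieG}\LieH$; the containment of identity components then follows by exponentiating. You instead reuse the group-level fact recorded in the discussion just before the lemma (which rests only on Lemma~\ref{lemma:all.points}): the pointwise stabilizer of the fiber $F$ is a normal subgroup of $\acf{H}$ with identity component $H^0$, so $H^0$ is normal in $\acf{H}$, whence $\acf{H}\subseteq N_G H^0=N$, and the passage to $\bigl(\acf{H}\bigr)^0\subseteq N^0$ is just the openness of $N^0$ in the closed subgroup $N$. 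Your argument is shorter, is not circular (the preceding paragraph does not depend on this lemma), and in fact proves the stronger containment $\acf{H}\subseteq N$ rather than only the statement about identity components. What the paper's computation buys is the explicit Lie-algebra inclusion $\acf\LieH\subseteq N_{\LieG}\LieH$, which is the form quoted in the proof of Corollary~\ref{corollary:trivial.conn}; your version recovers the same inclusion by differentiating $\acf{H}\subseteq N_G H^0$ (the Lie algebra of $N_G H^0$ is $N_{\LieG}\LieH$ since $H^0$ is connected), so nothing downstream is lost.
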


\begin{proof}
Pick a point $x \in X$ and vector fields $v_1, v_2, \dots, v_n \in \LieG$, linearly independent at the generic point
but with $v_1(x) = 0$ while $v_2, v_3, \dots, v_n$ are linearly independent at $x$.
Let
\[
s := v_1 \wedge v_2 \wedge \dots \wedge v_n.
\]
For any element $g \in \acf{H}$, since $s(x) = 0$, we must have $s(gx) = 0$.
So for any element $w \in \acf\LieH$, $\LieDer_w s$ vanishes at $x$:
\begin{align*}
\LieDer_w s
& =
\pr{\LieDer_w v_1} \wedge v_2 \wedge \dots \wedge v_n - v_1 \wedge \pr{\LieDer_w v_2} \wedge v_2 \wedge \dots \wedge v_n + \cdots,
\end{align*}
so that at $x$:
\[
0 = \pr{\LieDer_w s}(x) = \pr{\LieDer_w v_1}(x) \wedge v_2(x) \wedge \dots \wedge v_n(x).
\]
Since $v_2, \dots, v_n$ can be generically chosen, $\LieDer_w v_1$ vanishes at $x$.
Hence $\LieDer_w \LieH \subset \LieH$, i.e., $w$ is in the Lie algebra of $N_{\LieG} \LieH$.
So $\acf\LieH \subset N_{\LieG} \LieH$.
\end{proof}

\begin{Corollary}\label{corollary:trivial.conn}
Take a complex homogeneous space $X = G/H$.
In the holomorphic $H$-mod\-ule~$\LieG/\LieH$, the subspace $\acf\LieH/\LieH$ is acted on trivially by $H^0$, so the action
of $H$ on $\acf\LieH/\LieH$ descends to an action of the group of components $H/H^0$ on $\acf\LieH/\LieH$.
\end{Corollary}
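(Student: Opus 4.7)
The plan is to leverage the preceding lemma, which shows $\acf\LieH \subset N_\LieG \LieH$, that is, $[\LieH,\acf\LieH] \subset \LieH$. This is exactly the statement, already noted in the text, that $\LieH$ is an ideal in $\acf\LieH$. We also need that $H \subset \acf{H}$, which holds because $H$ fixes $x_0$ and therefore preserves the anticanonical fiber $F = \acf{H} x_0$; in particular $\acf\LieH$ is $H$-stable, so the adjoint action of $H$ on $\LieG/\LieH$ restricts to an action on the subspace $\acf\LieH/\LieH$.

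Next I would pass from Lie algebras to the group. For any $u \in \LieH$ and $v \in \acf\LieH$, the ideal condition gives $[u,v] \in \LieH$, so $\ad(u)$ acts as zero on the quotient $\acf\LieH/\LieH$. Then using the identity $\Ad(\exp u) = \exp(\ad u)$, one gets
\[
\Ad(\exp u)\,v = v + \sum_{k\ge 1} \frac{1}{k!}\pr{\ad u}^k v \equiv v \pmod{\LieH},
\]
since each term with $k\ge 1$ already lies in $\LieH$ by iterating the ideal property. Because $H^0$ is connected, it is generated by $\exp(\LieH)$, so this shows that $H^0$ acts trivially on $\acf\LieH/\LieH$.

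Since the normal subgroup $H^0 \subset H$ acts trivially on $\acf\LieH/\LieH$, the $H$-action factors through the quotient group $H/H^0$, which is the group of components of $H$. That is the desired conclusion.

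The only point requiring a little care is the inclusion $H \subset \acf{H}$, ensuring that the $H$-action on $\LieG/\LieH$ really does preserve the subspace $\acf\LieH/\LieH$; but this is immediate from the construction of $\acf{H}$ as the stabilizer of the anticanonical fiber through $x_0$. Everything else is a formal consequence of the ideal property $[\LieH,\acf\LieH]\subset\LieH$ supplied by the previous lemma, so I expect no real obstacle.
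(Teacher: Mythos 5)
Your proof is correct and follows essentially the same route as the paper's: both deduce from the preceding lemma that $\acf\LieH\subset N_{\LieG}\LieH$, hence $[\LieH,\acf\LieH]\subset\LieH$, so $\LieH$ acts trivially on $\acf\LieH/\LieH$ and therefore the connected group $H^0$ does too. Your version merely makes explicit two steps the paper leaves implicit (that $H\subset\acf{H}$ so the subspace is $H$-stable, and the exponentiation $\Ad(\exp u)=\exp(\ad u)$), which is fine.
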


\begin{proof}
Every element of $\acf\LieH$ lies in the normalizer $N_{\LieG} \LieH$, so for any $v \in \LieH$ and $w \in
\acf\LieH$, $[v, w] \in \LieH$.
Hence $\LieH$ acts trivially on $\acf\LieH/\LieH$ in the adjoint representation on $\LieG/\LieH$.
\end{proof}

\begin{Lemma}\label{lemma:in.the.fiber}
Take a complex homogeneous space $X = G/H$.
Then every holomorphic map $f \colon Z \to X $ from a connected normal compact complex space for
which $f^*\cb{X}$ is trivial lies in a fiber of the anticanonical fibration $X \to \acf{X}$.
\end{Lemma}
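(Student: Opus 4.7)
The plan is to exploit the fact that, by construction, the fibers of the anticanonical fibration are the equivalence classes under ``same set of vanishing sections of $-K_X$''. So it suffices to show that, for any two points $z_1, z_2 \in Z$, the images $f(z_1), f(z_2) \in X$ are annihilated by the same sections of $-K_X$.

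First, I would unwind the triviality hypothesis on $f^{*}\cb{X}$. Since $f^{*}\cb{X}$ is trivial, so is its dual $f^{*}\acb{X}$, and because $Z$ is connected, compact, and normal, the only global holomorphic functions on $Z$ are constants, so $\cohomology{0}{Z, f^{*}\acb{X}} \cong \C$. Hence every pulled-back section $f^{*}s$ for $s \in \cohomology{0}{X, \acb{X}}$ is a scalar multiple of a fixed nowhere-vanishing trivialization of $f^{*}\acb{X}$, and therefore $f^{*}s$ is either identically zero on $Z$ or nowhere zero on $Z$.

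Next, I would apply this dichotomy pointwise. Fix $z_1, z_2 \in Z$, let $x_i := f(z_i)$, and take any $s \in \cohomology{0}{X, \acb{X}}$ with $s(x_1) = 0$. Then $f^{*}s$ vanishes at $z_1$, so by the dichotomy $f^{*}s \equiv 0$ on $Z$, whence $s(x_2) = 0$. The symmetric argument swapping $z_1$ and $z_2$ shows that $x_1$ and $x_2$ are annihilated by exactly the same subset of $\cohomology{0}{X, \acb{X}}$. By the description of the anticanonical fibration already given in this section (two points lie in the same fiber iff the sections of $\acb{X}$ vanishing at one vanish at the other and vice versa), $x_1$ and $x_2$ lie in the same fiber of $X \to \acf{X}$. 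Since this holds for every pair of points of $Z$, the image $f(Z)$ is contained in a single fiber.

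I do not anticipate a real obstacle here; the only subtleties are invoking normality and compactness of $Z$ to conclude $\cohomology{0}{Z,\OO_Z} = \C$, and recalling from the construction of $\acf{X}$ that the anticanonical map is defined everywhere on $X$ (because $\acb{X}$ has global sections coming from wedge products of vector fields in $\LieG$, so the bad locus $X'$ is empty). Both of these are already established above in the section.
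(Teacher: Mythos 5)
Your argument is correct and is essentially the paper's own proof, which is the one-line observation that any section of $\acb{X}$ vanishing at a point of $f(Z)$ pulls back to a section of the trivial bundle $f^*\acb{X}$ vanishing somewhere, hence vanishing identically. You have merely spelled out the same reasoning (constancy of holomorphic functions on the connected compact normal space $Z$, plus the description of the fibers of $X\to\acf{X}$ as equivalence classes) in more detail.
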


\begin{proof}
Any section of $\acb{X}$ vanishing at one point of $f(Z)$ pulls back to the zero section of~$f^*(\acb{X})$.
\end{proof}

\begin{Lemma}\label{lemma:Stein.G}
Every complex Lie group with countably many components whose identity component is simply connected is a Stein manifold.
\end{Lemma}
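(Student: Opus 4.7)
The plan is to reduce to the connected case and then apply the Levi decomposition. First, since $G$ has countably many components, $G$ is biholomorphic to a countable disjoint union of copies of its identity component $G^0$. For any coherent analytic sheaf $\mathcal{F}$ on such a disjoint union, $\cohomology{q}{G,\mathcal{F}}$ is the direct product of the cohomologies of $\mathcal{F}$ restricted to each component, so the vanishing in positive degrees on the components implies vanishing on $G$. Hence it suffices to prove that $G^0$ is Stein, i.e., we have reduced to the case where $G$ is a connected and simply connected complex Lie group.

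Next, I would invoke the Levi decomposition $G^0 = R \rtimes S$, where $R$ is the solvable radical (a closed connected normal complex Lie subgroup) and $S$ is a maximal semisimple (Levi) subgroup. Because $G^0$ is simply connected, both $R$ and $S$ are simply connected, and the multiplication map $R \times S \to G^0$ is a biholomorphism of complex manifolds. Thus it is enough to check that $R$ and $S$ are each Stein; for then $R \times S$ is Stein as well (cohomology of coherent sheaves on a product of Stein manifolds vanishes in positive degrees, e.g., by the Künneth formula or the fact that a product of Stein manifolds is Stein).

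For the semisimple factor $S$, I would appeal to the classical fact that a simply connected complex semisimple Lie group is a linear algebraic group and admits a holomorphic closed embedding into some $\GL{N,\C}$; such a closed complex submanifold of $\C^{N^2}$ is Stein. For the solvable factor $R$, I would argue by induction on the derived length. The abelianization $R/[R,R]$ is a simply connected abelian complex Lie group, hence biholomorphic to some $\C^k$; the commutator subgroup $[R,R]$ is simply connected solvable of smaller derived length and hence Stein by induction (indeed biholomorphic to some $\C^\ell$). The fibration $R \to R/[R,R]$ with fiber $[R,R]$ is a holomorphic principal bundle over the Stein base $\C^k$, hence trivial, giving $R \cong \C^k \times [R,R] \cong \C^{\dim R}$ as a complex manifold. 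So $R$ is Stein.

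The main technical obstacle is the assertion that the Levi decomposition splits holomorphically at the manifold level and that simply connected solvable complex Lie groups are biholomorphic to affine space; both are classical but depend essentially on simple connectedness (without which one would pick up compact complex tori as quotients, which obstruct Steinness). Everything else is a matter of stitching together Stein factors via products and countable disjoint unions.
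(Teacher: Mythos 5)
Your argument is correct, but it takes a genuinely different route from the paper's. The paper also reduces at once to the identity component and invokes the Levi decomposition, but only to observe that the radical of $G^0$ is simply connected and hence contains no compact complex torus; since the connected center lies in the radical, it is ``reductive'' in the sense of Matsushima and Morimoto, and their Theorem~1 (a connected complex Lie group whose connected center is reductive in this sense is Stein) finishes the proof in two lines. You instead build the Stein structure by hand: you split $G^0\cong R\times S$ biholomorphically via the Levi decomposition, realize the simply connected semisimple factor $S$ as a closed complex submanifold of some $\GL{N,\C}$, hence of affine space, and show by induction on derived length that the simply connected solvable radical $R$ is biholomorphic to $\C^{\dim R}$. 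Your approach is more self-contained and explicit -- it exhibits $G^0$ as a product of an affine variety with an affine space -- at the cost of invoking Grauert's Oka principle to trivialize the bundle $R\to R/[R,R]$; note that the justification at that step should be that the base $\C^k$ is contractible as well as Stein, since a holomorphic principal bundle over a Stein base is holomorphically trivial only when it is topologically trivial. The paper's proof is shorter but leans on a stronger black box, the Matsushima--Morimoto characterization of Stein connected complex Lie groups. Both arguments use simple connectedness in the same essential way: to rule out compact complex tori inside the solvable part.
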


\begin{proof}
Clearly it is sufficient to check the identity component.
By Levi decomposition (see \cite[Theorem 16.3.7]{Hilgert.Neeb:2012}), the radical is simply connected.
The center belongs to the radical, and the radical contains no complex torus, so the center is reductive, and we apply
\cite[Theorem 1]{Matsushima/Morimoto:1960}.
\end{proof}

\subsection{Developing and anticanonical fibrations}

\begin{Lemma}\label{lemma:ab.var.in.fiber}
Take a complex homogeneous space $X = G/H$.
Take a finite holomorphic map $f \colon Z\to X$ from a connected projective variety for which $\cb{Z}$ and $f^*\cb{X}$ are trivial.
Then $f(Z)$ lies in a fiber of the anticanonical fibration $X\to \acf{X}$.
If $Z$ is smooth then it is an abelian variety, and the map $f$ is an immersion, equivariant for a morphism of complex Lie groups from a complex abelian Lie group acting transitively on the abelian variety $Z$, and the image of $f$ is an abelian subvariety of $X$.
\end{Lemma}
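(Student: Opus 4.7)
The first assertion is a direct application of Lemma~\ref{lemma:in.the.fiber}: since $f^*\cb{X}$ is trivial, the map $f$ (if needed, composed with the normalization of $Z$) sends $Z$ into a single fiber $F$ of the anticanonical fibration $X \to \acf{X}$. Assume from now on that $Z$ is smooth with $r := \dim Z$. The key input is that $TF$ is globally trivialized by the vector fields from $\acf\LieH/\LieH$ (the discussion after Lemma~\ref{lemma:all.points} and Corollary~\ref{corollary:trivial.conn}), so $f^*TF$ is a trivial holomorphic vector bundle on $Z$.

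I would first prove that $f$ is an immersion. Since $f(Z) \subseteq F$, the differential factors as $df\colon TZ \to f^*TF$, and its top exterior power $\wedge^r df\colon \wedge^r TZ \to \wedge^r f^*TF$ is a section of $\mathrm{Hom}(\wedge^r TZ,\wedge^r f^*TF)$. This bundle is trivial, since $\wedge^r TZ = \acb{Z}$ is trivial by hypothesis and $\wedge^r f^*TF$ is a wedge of a trivial bundle. Global sections of a trivial bundle on the connected compact $Z$ are constants, so $\wedge^r df$ is constant. Because $f$ is finite from a smooth variety, $df$ attains rank $r$ at some (generic) point, forcing this constant to be nonzero and hence nowhere vanishing; therefore $df$ is pointwise injective, i.e., $f$ is an immersion.

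Next, I would show $Z$ is an abelian variety. Dualizing $df$ gives a surjection $f^*\Omega^1_F \twoheadrightarrow \Omega^1_Z$ from a trivial bundle, so $\Omega^1_Z$ is globally generated. Choose global sections $\omega_1,\ldots,\omega_r$ of $\Omega^1_Z$ spanning the fiber $\Omega^1_{Z,z_0}$ at some point $z_0$; then $\omega_1 \wedge \cdots \wedge \omega_r$ is a nonzero section of the trivial line bundle $\cb{Z} = \wedge^r\Omega^1_Z$, hence nowhere vanishing, so the $\omega_i$ are everywhere linearly independent and trivialize $\Omega^1_Z$. Thus $TZ$ is trivial, and by Wang's theorem the parallelizable smooth projective (hence K\"ahler) variety $Z$ is a complex torus, i.e.\ an abelian variety.

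Finally, for the equivariance, re-trivialize $TZ$ by translation-invariant vector fields on the torus $Z$. Then $df$ becomes a section of the trivial bundle $\mathrm{Hom}(T_0Z,\acf\LieH/\LieH)\otimes\OO_Z$, hence a constant linear injection $\phi\colon T_0Z \hookrightarrow \acf\LieH/\LieH$ with image $\mathfrak{a}$. Because translation-invariant fields on $Z$ commute, the fields in $\mathfrak{a}$ commute when restricted to $f(Z)$; in other words, each commutator $[\phi(v),\phi(w)]\in\acf\LieH/\LieH$ acts as the zero vector field on $f(Z)$. Choosing a vector-space lift $\tilde\phi\colon T_0Z\to\acf\LieH\subseteq\LieG$ and exponentiating, then quotienting by the subgroup acting trivially on $f(Z)$, yields an abelian complex Lie group $A$ with a morphism $A\to G$; $A$ acts on $Z$ by translations and on $f(Z)$ transitively, intertwined with $f$, so $f$ is $A$-equivariant, and $f(Z)=A\cdot f(z_0)$ is a compact orbit of an abelian group, hence an abelian subvariety of $X$. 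The most delicate step is this last one: a naive vector-space lift $\tilde\phi$ need not have abelian image in $\LieG$, and one must verify that the nonabelian part is absorbed into the kernel of the action on $f(Z)$ in order to extract an honest abelian Lie-group morphism.
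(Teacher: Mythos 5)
Your argument is correct, and for the central step it takes a genuinely different route from the paper. The paper first proves $Z$ is an abelian variety by invoking the Bogomolov--Beauville decomposition ($Z = A\times B$ after a finite cover) and then killing the simply connected factor $B$ by lifting the map on $\{a_0\}\times B$ to the universal covering group of the fiber's transitive group, which is Stein (Lemma~\ref{lemma:Stein.G}, via Matsushima--Morimoto); only afterwards does it deduce the immersion and equivariance by observing that the matrix of $df$ in the two canonical trivializations is constant. You instead avoid Bogomolov and Stein theory entirely: triviality of $\cb{Z}$ together with triviality of $f^*TF$ (the fiber $F$ being parallelized by $\acf\LieH/\LieH$) forces $\wedge^{r}df$ to be a nowhere-zero constant, hence $f$ is an immersion; then $\Omega^1_Z$ is a globally generated bundle with trivial determinant, hence trivial, and Wang's theorem gives that the parallelizable projective manifold $Z$ is an abelian variety. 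This is more elementary and, as a bonus, it proves triviality of $TZ$ for $Z$ itself rather than for a finite covering, so it yields the lemma's conclusion ``$Z$ is an abelian variety'' without any covering caveat --- the paper's own proof passes to a finite cover of $Z$ before splitting off $B$ and is slightly terser on this point. The final equivariance step is essentially the paper's (constant transition matrix), and the ``delicate step'' you flag at the end is not actually an obstruction: you do not need to lift $\phi$ into $\LieG$. Work with the connected group $\GZ=\acf{H}/H^0$ acting on the fiber $F$ with discrete stabilizers, whose Lie algebra is exactly $\acf\LieH/\LieH$; since the translation-invariant fields on $Z$ are $f$-related to the fields $\phi(v)$, each bracket $[\phi(v),\phi(w)]$ vanishes along $f(Z)$, and by Lemma~\ref{lemma:all.points} an element of $\acf\LieH/\LieH$ vanishing at one point of $F$ is zero, so $\mathfrak{a}$ is an honestly abelian subalgebra of $\operatorname{Lie}\GZ$ and exponentiates to the required morphism $\C^{r}\to\GZ$ (this is precisely the paper's device of ``replacing $X$ by that fiber''). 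With that observation your proof is complete.
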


\begin{proof}
By Lemma~\ref{lemma:in.the.fiber}, $f$ maps to a fiber.
As above, the subgroup preserving the fiber acts transitively, with discrete stabilizer, so we can replace $X$ by that fiber, and then without loss of generality $X=G/\Gamma$ for some connected complex Lie group $G$ and discrete subgroup ${\Gamma \subset G}$.
After perhaps replacing by a finite unramified covering, any smooth projective variety $Z$ with trivial canonical bundle splits into a product $Z=A \times B$ where $A$ is an abelian variety and~$B$ is a simply connected Calabi--Yau manifold \cite{Bogomolov:1974}.
So the map $Z\to X$ lifts on each subvariety~$\{a_0\} \times B$, for any $a_0 \in A$, to a map $B
\to \widetilde{G}$ to the universal covering group of the identity component of $G$.
But that universal covering group is a Stein manifold by Lemma~\ref{lemma:Stein.G}, so embeds holomorphically in Euclidean space.

Since $B$ is a compact complex manifold, every holomorphic map of $B$ to Euclidean space is constant, by the maximum principle.
So the map $B\to\widetilde{G}$ is constant.
Hence $B$ is a point, so $Z = A$ is an abelian variety.
Trivialize the tangent bundle of $A$ with a basis of holomorphic vector fields $v_a$, and trivialize the tangent bundle of $X
 = G/\Gamma$ with a basis of holomorphic vector fields $w_i$ from the Lie algebra $\LieG$ of $G$.
The map $f \colon A \to X$ then gives relations $f_* v_a = c^i_a w_i$.
These $c^i_a \colon A \to \C$ are holomorphic functions, so constants.
In particular $f \colon A \to X$ is a~holomorphic immersion.
With a change of basis, we arrange that $v_a = w_a$.
\end{proof}

In the proof above, if $(X,G)$ is an algebraic homogeneous space, it is not clear to the authors whether $\tilde{G}$ is an affine Lie group.

\subsection{The ant fibration}\label{subsec:ant}

In this section, we define a new refinement of the anticanonical fibration, which we believe has independent interest.
Take the anticanonical fibration of the anticanonical fibration, i.e., fiberwise we look at two points of a fiber as equivalent if they have the same sections of the canonical bundle of that fiber vanishing at both.
Since the tangent bundle of each fiber is trivial, this is the same as saying that two points in the same fiber are equivalent if every holomorphic function on the fiber which vanishes at one vanishes at both.
This equivalence relation gets iterated, and once we finish, we have a homogeneous holomorphic fiber bundle with perhaps finer fibers, which we call the \emph{ant fibration}.
On each fiber of the ant fibration, all holomorphic functions are constant, and there is a unique holomorphic section of the anticanonical bundle, up to constant scaling.
Locally constant functions on any fiber are holomorphic, so the fibers of the ant fibration are connected.
Throughout our work below, the reader could make use of the anticanonical fibration instead of the ant fibration, with the advantage of being more familiar to experts in homogeneous complex manifolds, but the disadvantage of providing possibly looser restrictions on developing maps.

\begin{Example}
There is one nontrivial holomorphic fiber bundle $\C^{\times} \to X \to \C^{\times}$, and it admits
infinitely many holomorphic faithful group actions of connected complex Lie groups~\cite{McKay:2015}. Its
anticanonical fibration is $\C^{\times} \to X \to \C^{\times}$, as the holomorphic sections of the
canonical bundle are pulled back from the base of the fibration, which is easy to check from the explicit
description of $X$ in \cite{McKay:2015}. But the ant fibration is trivial $X \to X$, since the
anticanonical fiber~$\C^{\times}$ has its points separated by holomorphic functions.
\end{Example}

\begin{Lemma}\label{lemma:ant.Stein}
The universal covering space of each ant fiber is a connected and simply connected complex Lie group and hence a Stein manifold.
\end{Lemma}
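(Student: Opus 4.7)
The plan is to exhibit each ant fiber $F$ as a quotient $L/\Gamma$ of a connected complex Lie group $L$ by a discrete subgroup $\Gamma$; then the universal cover of $F$ coincides with the universal cover of $L$, which is a connected and simply connected complex Lie group, hence Stein by Lemma~\ref{lemma:Stein.G}.

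I would begin with the single-step anticanonical case. For the anticanonical fibration $X\to\acf{X}$, the discussion preceding Lemma~\ref{lemma:in.the.fiber} identifies the anticanonical fiber through $x_0$ with $(\acf{H}/H^0)/(H/H^0)$, which, at least on its connected component, is a homogeneous space for the connected complex Lie group $\acf{H}^0/H^0$ acting with discrete stabilizer. Moreover, the tangent bundle of the fiber is trivialized by the image of $\acf\LieH/\LieH$ in its global holomorphic vector fields, and Lemma~\ref{lemma:all.points} says that any such vector field vanishes on the fiber either everywhere or nowhere, which is exactly the infinitesimal statement that the stabilizer has trivial Lie algebra.

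The next step is to iterate this construction inside each fiber. By induction on the iteration step, each intermediate fiber is a connected homogeneous complex manifold for some connected complex Lie group acting with discrete stabilizer, and whose Lie algebra trivializes the tangent bundle of the fiber. Applying the one-step case inside such a fiber produces a sub-fiber of the same form, so the iteration preserves the inductive hypothesis. Since the dimensions of the trivializing Lie algebras strictly decrease until they stabilize, the iteration terminates after finitely many steps, yielding $F = L/\Gamma$ with $L$ a connected complex Lie group and $\Gamma\subset L$ a discrete subgroup; connectedness of $F$ is already recorded in the text.

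The universal cover $\tilde F$ is then identified with the universal cover $\tilde L$ of $L$, which is itself a connected and simply connected complex Lie group, and Lemma~\ref{lemma:Stein.G} immediately gives that $\tilde L$ is Stein. The delicate point will be verifying that at each iteration step the trivializing Lie algebra really does act with discrete isotropy on the new, smaller fiber, i.e., that the hypothesis of Lemma~\ref{lemma:all.points} is preserved when one passes from $X$ to an anticanonical fiber and takes the anticanonical fibration there; this is where the inductive set-up has to be stated with care, but once this is in hand the conclusion is purely formal.
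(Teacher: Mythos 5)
Your proof is correct and follows essentially the same route as the paper: both exhibit the ant fiber as a quotient of a connected complex Lie group by a discrete subgroup and then invoke Lemma~\ref{lemma:Stein.G}. The paper short-circuits your induction by writing the ant fibration at once as $X \to \overline{X} = G/\overline{H}$ and observing via Lemma~\ref{lemma:all.points} that $\overline{H}$ normalizes $\LieH$, hence $H^0 \subseteq \overline{H}$ is normal and the fiber is $\pr{\overline{H}/H^0}/\pr{H/H^0}$ directly --- which also disposes of the ``delicate point'' you flag at the end.
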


\begin{proof}
Write the ant fibration as $X\to \overline{X} = G/\overline{H}$ and a fiber as \pr{X_0,\GZ} where $X_0 := \overline{H}/H$.
By Lemma~\ref{lemma:all.points}, adjoint $\overline{H}$-action preserves $\LieH$, so $H^0 \subseteq \overline{H}$ is a normal subgroup.
Hence $X_0$ is a~quotient of a complex Lie group by a discrete subgroup:
\[
X_0 = \overline{H}/H = \pr{\overline{H}/H^0}/\pr{H/H^0}.
\]
So the universal covering space of each fiber $X_0$ is a complex Lie group, and simply connected, so is a Stein manifold by Lemma~\ref{lemma:Stein.G}.
\end{proof}
Note that
\begin{align*}
&\cohomology{0}{X_0,\OO}=\C,\qquad
\cohomology{0}{X_0,T}=\LieG_0=\overline{\LieH}/\LieH,\qquad
\cohomology{0}{X_0,\acb{}}=\C.
\end{align*}

\begin{Lemma}\label{lemma:ab.var.in.fiber.2}
Take a complex homogeneous space $X = G/H$.
Take a finite holomorphic map~${f \colon Z \to X}$ from a connected projective variety for which $\cb{Z}$ and $f^*\cb{X}$ are trivial.
Then~$f(Z)$ lies in a fiber of the ant fibration.
Suppose that $Z$ is smooth.
The variety $Z$ is an abelian variety.
The map $f$ is an immersion, equivariant for a morphism of complex Lie groups from a complex abelian Lie group acting transitively on the abelian variety $Z$.
The image of $f$ is an abelian subvariety of $X$.
\end{Lemma}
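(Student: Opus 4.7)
The plan is to reduce to Lemma~\ref{lemma:ab.var.in.fiber} and then iterate. First I would apply that lemma directly to put $f(Z)$ inside a single anticanonical fiber $X_1 := \acf{H}\cdot x_0 \subseteq X$. The key observation is that $X_1$ is itself a complex homogeneous space (a quotient of the connected complex Lie group $\acf{H}/H^0$ by the discrete group $H/H^0$), and has trivial canonical bundle: the vector fields from $\acf\LieH$ give a $G$-equivariant trivialization of $TX|_{X_1}$, and by construction $\acb{X}$ is trivial along $X_1$. Since $f$ factors through $X_1$ and $f^*\cb{X}$ is trivial, $f^*\cb{X_1}$ is also trivial, so Lemma~\ref{lemma:in.the.fiber} reapplied to $f\colon Z \to X_1$ lands $f(Z)$ in a fiber $X_2$ of the anticanonical fibration of $X_1$ (where now the sections of $\acb{X_1}=\mathcal{O}_{X_1}$ are just holomorphic functions on $X_1$). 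Iterating this procedure yields a descending chain $X \supseteq X_1 \supseteq X_2 \supseteq \dotsb$ containing $f(Z)$, which by the very definition of the ant fibration stabilizes at an ant fiber $X_0$.

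For the smoothness assertions I would copy the proof strategy of Lemma~\ref{lemma:ab.var.in.fiber} verbatim, only substituting Lemma~\ref{lemma:ant.Stein} for Lemma~\ref{lemma:Stein.G}. By Bogomolov's decomposition \cite{Bogomolov:1974}, after passing to a finite unramified cover, $Z = A \times B$ with $A$ an abelian variety and $B$ a simply connected Calabi--Yau manifold. For each $a_0 \in A$ the restriction $\{a_0\} \times B \to X_0$ lifts, by simple connectedness of $B$, to the universal cover of $X_0$. By Lemma~\ref{lemma:ant.Stein} this universal cover is a connected simply connected complex Lie group that is Stein, hence embeds holomorphically into a complex Euclidean space. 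Since $B$ is compact, the maximum principle forces the lifted map to be constant, so $B$ is a point and $Z = A$ is an abelian variety.

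The remaining claims—that $f$ is an immersion, is equivariant for a morphism of complex Lie groups from an abelian complex Lie group acting transitively on $A$, and has image an abelian subvariety of $X$—follow as in the last paragraph of the proof of Lemma~\ref{lemma:ab.var.in.fiber}: expand $f_*$ in a basis of translation-invariant holomorphic vector fields $v_a$ on $A$ and in a basis of vector fields on $X_0$ drawn from $\LieGZ = \acf\LieH/\LieH$; the coefficients of $f_*v_a$ are then holomorphic functions on the compact variety $A$, hence constant, and a linear change of basis identifies $f$ with a group immersion.

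I expect the main obstacle to be the inductive step in the first paragraph: one has to verify carefully that at every stage the fiber $X_i$ is genuinely a complex homogeneous space with trivial canonical bundle (so Lemma~\ref{lemma:in.the.fiber} applies again), and that the chain of strictly nested fibers terminates in finitely many steps inside a fiber on which all holomorphic functions are constant, i.e., precisely in an ant fiber.
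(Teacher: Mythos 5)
Your proposal is correct and follows essentially the same route as the paper: the paper's proof likewise applies Lemma~\ref{lemma:ab.var.in.fiber} to land in an anticanonical fiber, replaces $X$ by that fiber and repeats, and then observes that all holomorphic functions on the fiber pull back to constants on the compact connected $Z$, so $f(Z)$ sits in an ant fiber; the smoothness, immersion and equivariance claims are simply inherited from Lemma~\ref{lemma:ab.var.in.fiber}. Your more explicit treatment of the iteration (checking each successive fiber is homogeneous with trivial canonical bundle, and that the chain terminates by the definition of the ant fibration) is exactly the detail the paper compresses into ``and repeat.''
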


\begin{proof}
By Lemma~\ref{lemma:ab.var.in.fiber}, $f$ maps to a fiber of the anticanonical fibration, so we can replace $X$ by that fiber, and $G$ by the automorphism group of that fiber, and repeat.
All holomorphic functions on $X$ pull back to constants on $Z$, so $f$ maps to a fiber of the ant fibration.
\end{proof}

\subsection{The ant foliation}
Since a Cartan geometry looks infinitesimally like its model, it bears a foliation which looks infinitesimally like the ant fibration.
In this section and the following sections we prove that the ant foliation splits the tangent bundle.

Take a holomorphic Cartan geometry $E \to M$ with model \pr{X,G} on a complex manifold~$M$.
Every holomorphic $G$-invariant vector subbundle $\vb{V} \subset TX$ has an associated
holomorphic vector subbundle $\vb{V} \subset TM$ by letting $V \subset T_{x_0} X$ be the
subspace of $\vb{V}$ at one point $x_0 \in X$, say with stabilizer $H \subset G$ and then
\[
\vb{V} := E \times^H V \subset E \times^H \pr{\LieG/\LieH}.
\]
If $\vb{F}$ is a $G$-invariant foliation of $X$, then its tangent bundle $\vb{V} := T\vb{F}$
becomes a subbundle~${\vb{V} \subset TM}$, but might not be tangent to a foliation of $M$.
The \emph{ant distribution} of $TM$ is the subbundle associated to the tangent bundle of the ant
foliation of $X$.

\begin{Lemma}
The ant distribution of any holomorphic Cartan geometry bears a flat holomorphic connection.
\end{Lemma}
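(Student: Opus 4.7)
The plan is to realize the ant distribution as a holomorphic vector bundle associated to a principal bundle whose structure group is effectively discrete, so that a flat holomorphic connection arises canonically from a monodromy representation.

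First I would unpack the definition: let $\overline{H} \subseteq G$ denote the stabilizer of the ant fiber through the basepoint $x_0 = eH \in X$, so that this fiber is $\overline{H}/H$ with tangent space at $x_0$ equal to $V := \overline{\LieH}/\LieH \subseteq \LieG/\LieH = T_{x_0}X$. Since $H \subseteq \overline{H}$, both $\LieH$ and $\overline{\LieH}$ are preserved by the adjoint action of $H$, so $V$ is an $H$-submodule. By the general construction preceding the lemma, the ant distribution on $M$ is then $\vb{V} = E \times^H V$.

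Next I would invoke Lemma~\ref{lemma:ant.Stein}, which asserts that $H^0$ is a normal subgroup of $\overline{H}$. Differentiating this yields $[\LieH, \overline{\LieH}] \subseteq \LieH$, so $\LieH$ acts trivially on the quotient $V = \overline{\LieH}/\LieH$ in the adjoint representation. Consequently $H^0$ acts trivially on $V$, and the $H$-module structure on $V$ factors through the discrete quotient group $H/H^0$. This is the precise analogue for the ant fibration of Corollary~\ref{corollary:trivial.conn}, and is the only genuinely nontrivial input; it is exactly what makes the ant fibration (rather than a coarser invariant) the right object.

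Finally, because $H^0 \subseteq H$ is a normal subgroup, $\widehat{M} := E/H^0$ is a holomorphic principal $(H/H^0)$-bundle over $M$; since the structure group $H/H^0$ is discrete, the projection $\widehat{M} \to M$ is an unramified holomorphic covering space, corresponding to a holonomy representation $\pi_1(M) \to H/H^0$. Rewriting the ant distribution as $\vb{V} = \widehat{M} \times^{H/H^0} V$, the obvious flat holomorphic connection on the product $\widehat{M} \times V$ is $(H/H^0)$-equivariant by construction and descends along the covering action to a flat holomorphic connection on $\vb{V}$, giving the required connection. I do not foresee any serious obstacle beyond verifying the triviality of the $\LieH$-action on $V$ above, which is immediate from Lemma~\ref{lemma:ant.Stein}.
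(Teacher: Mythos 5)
Your proof is correct and uses the same mechanism as the paper: $H^0$ acts trivially on the relevant $H$-module, so the associated bundle becomes a flat bundle attached to the covering $E/H^0 \to M$ via a representation of the discrete group $H/H^0$. The only (harmless) difference is that you work directly with the ant stabilizer $\overline{H}$, deducing triviality of the $H^0$-action on $\overline{\LieH}/\LieH$ from the normality statement in Lemma~\ref{lemma:ant.Stein}, whereas the paper applies Corollary~\ref{corollary:trivial.conn} to one anticanonical fibration at a time and inducts over the iteration defining the ant fibration.
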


\begin{proof}
Because $H^0$ acts trivially on $\acf\LieH/\LieH$, the associated vector bundle is
holomorphically trivial on $E/H^0$, a covering space.
Apply induction, taking anticanonical fibration of the anticanonical fibration.
\end{proof}

\begin{Lemma}
Take a smooth projective variety $M$ with a minimal geometry.
The ant distribution of the geometry is the tangent bundle of a foliation, the \emph{ant foliation} of the Cartan geometry, and admits a smooth holomorphic complementary foliation splitting $TM$.
\end{Lemma}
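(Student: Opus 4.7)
I would prove the two assertions in turn: integrability of the ant distribution $\vb{V}\subseteq TM$, and the existence of a holomorphic complementary foliation. The preceding lemma provides a flat holomorphic connection on $\vb{V}$, built from the observation (Corollary~\ref{corollary:trivial.conn}) that $H^0$ acts trivially on $V:=\acf\LieH/\LieH$, so $\vb{V}$ is a quotient of a trivial bundle on the covering $E/H^0\to M$. The critical algebraic fact I intend to exploit is that $\acf\LieH\subseteq\LieG$ is a Lie \emph{subalgebra} containing $\LieH$, built by iteration of the anticanonical construction.

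\textbf{Integrability of $\vb{V}$.} On the principal bundle $E\to M$, the preimage $\omega^{-1}\bigl(\acf\LieH\bigr)$ is a holomorphic distribution containing the vertical $\omega^{-1}(\LieH)$, and its horizontal lift pushes down to $\vb{V}$. Using the structure equation $d\omega+\tfrac12[\omega,\omega]=K$ and $\bigl[\acf\LieH,\acf\LieH\bigr]\subseteq\acf\LieH$, the bracket of two horizontal lifts of sections of $\vb{V}$ lies in $\omega^{-1}\bigl(\acf\LieH\bigr)$ modulo the curvature. The remaining point is that the $\LieG/\acf\LieH$--component of the curvature restricted to $V\wedge V$ vanishes; I would deduce this from $H$-equivariance of $K$ together with the iterative definition of $\acf\LieH$ as the stabilizer at every stage of the anticanonical fibers of the anticanonical fibers, which leaves no room for a curvature component transverse to $\acf\LieH$ along $V\wedge V$. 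Frobenius then gives the ant foliation.

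\textbf{Complementary foliation.} For the splitting, I would first construct an $H$-invariant linear complement to $V$ in $\LieG/\LieH$: since $H^0$ acts trivially on $V$, an arbitrary complement can be made $H$-invariant by averaging over the finite quotient $H/H^0$ (after, if necessary, replacing $M$ by a finite unramified cover, which is allowed in the setting of the main theorem; minimality is preserved). The associated subbundle $\vb{W}\subset TM$ gives a smooth holomorphic splitting $TM=\vb{V}\oplus\vb{W}$. To see that $\vb{W}$ is integrable, I would combine the flatness of the connection on $\vb{V}$ (giving $\det\vb{V}$ numerically trivial) with the minimality hypothesis (giving $\cb{M}$ nef, by Theorem~\ref{theorem:drop}), so that $\det\vb{W}\equiv\cb{M}^{-1}$ numerically is anti-nef. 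This forces the second fundamental form of $\vb{W}$ in $TM$, viewed as a section of $\vb{V}\otimes\Lambda^2\vb{W}^*$, to vanish, since its nonvanishing would produce a morphism of vector bundles incompatible with the sign constraints coming from nefness of $\cb{M}$ and triviality of $\det\vb{V}$.

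\textbf{Main obstacle.} The hard part will be the integrability of $\vb{W}$. Integrability of $\vb{V}$ is forced by the Lie-algebraic structure of $\acf\LieH$, but any complement is only defined up to $H$-equivariant choice, and there is no a priori reason for its bracket to close. I expect the real work to lie in ruling out the second fundamental form using the interplay between the nefness of $\cb{M}$ (minimality, no rational curves) and the flat connection on $\vb{V}$; this is precisely the place where the global projective hypothesis, rather than just the Cartan-geometric input, is indispensable.
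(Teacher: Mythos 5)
Your proposal diverges substantially from the paper's proof, and both halves of it contain genuine gaps. The paper's argument is two sentences long: the preceding lemma supplies a flat holomorphic connection on the ant distribution, hence its Chern classes vanish, and then \emph{both} the integrability of $\vb{V}$ and the existence of a smooth holomorphic complementary foliation are quoted as a black box from \cite[Theorem~5.2]{Loray/Pereira/Touzet:2011}: on a non-uniruled smooth projective variety, every holomorphic distribution with trivial first Chern class is a smooth holomorphic foliation and admits a smooth holomorphic complementary foliation splitting $TM$. The paper makes no attempt at a Lie-algebraic or curvature proof of integrability; indeed it warns just before this lemma that the subbundle of $TM$ associated to a $G$-invariant foliation of $X$ ``might not be tangent to a foliation of $M$''.

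Concretely, three steps of your sketch fail. First, your integrability argument for $\vb{V}$ requires the $\LieG/\acf\LieH$-component of the curvature to vanish on $V\wedge V$. The lemma is stated for minimal geometries, not flat ones, and $H$-equivariance of the curvature function $E\to\Lambda^2(\LieG/\LieH)^*\otimes\LieG$ imposes no pointwise vanishing of any component: the iterative definition of $\acf\LieH$ constrains the model, not the curvature of an arbitrary geometry modelled on it, so ``no room for a curvature component transverse to $\acf\LieH$'' is an assertion, not a deduction. Second, averaging over $H/H^0$ to produce an $H$-invariant complement requires $H/H^0$ to be finite, which is not assumed in this lemma (the paper only adds that hypothesis later, in Corollary~\ref{cor:splitting}, and several of its examples have $H$ infinite discrete). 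Third, the vanishing of the second fundamental form in $\vb{V}\otimes\Lambda^2\vb{W}^*$ does not follow from any ``sign constraint'': nefness of $\cb{M}$ together with numerical triviality of $\det\vb{V}$ places no restriction on sections of that bundle, and this is precisely the hard content of the Loray--Pereira--Touzet theorem, which your paragraph in effect attempts to reprove without its machinery.
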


\begin{proof}
Since the ant distribution has a holomorphic connection, its Chern classes vanish.
Since~$M$ is not uniruled, every holomorphic distribution with trivial first Chern class is a smooth holomorphic foliation and admits a smooth holomorphic complementary foliation splitting $TM$ \cite[Theorem 5.2]{Loray/Pereira/Touzet:2011}.
\end{proof}

\section{Splitting}\label{section:splitting}

\begin{Lemma}\label{lemma:symmetry.splitting}
Take a smooth projective variety $M$ with a minimal geometry $E \to M$ modelled on a complex homogeneous space \pr{X,G}, $X = G/H$.
Let $V \subset \LieG/\LieH$ be the set of all vectors invariant under the $H$-action and let $\vb{V}:= E \times^H V \subset E \times^H \pr{\LieG/\LieH} = TM$.
Then $M$ splits~${M = M_0 \times A}$ where $A$ is an abelian variety and $M_0$ has no nonzero holomorphic vector fields: $0=\cohomology{0}{M_0,TM_0}$.
The distribution $\vb{V} \subset TM$ is the tangent bundle of a foliation on $M$.
Every leaf of that foliation lies inside an abelian variety $\{m_0\} \times A \subset M$.
\end{Lemma}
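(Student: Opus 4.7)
The strategy is to extract from $V$ a family of everywhere-linearly-independent, mutually commuting global holomorphic vector fields on $M$, identify their flows with the translation action of an abelian variety, and then split $M$ as a product. Since $V \subset \LieG/\LieH$ consists of $H$-fixed vectors, $H$ acts trivially on $V$, so $\vb{V} = E \times^H V$ is canonically the trivial vector bundle $M \times V$. Constant sections, viewed through the inclusion $\vb{V} \hookrightarrow TM$, yield for each $v \in V$ a global holomorphic vector field $X_v$ on $M$, and these fields span $\vb{V}$ at every point.

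Because the geometry is minimal, Theorem~\ref{theorem:drop} shows that $M$ contains no rational curves, whence by the classical results of Lieberman and Fujiki on $\operatorname{Aut}^0$ of compact K\"ahler manifolds the identity component $A := \operatorname{Aut}^0(M)$ must be an abelian variety: any non-trivial $\C^{\times}$ or $\C$ factor in $\operatorname{Aut}^0(M)$ would act with rational-curve orbit closures. On compact $M$, every global holomorphic vector field integrates to a one-parameter subgroup of $\operatorname{Aut}^0(M)$, so $\cohomology{0}{M, TM} = \operatorname{Lie}(A)$ is abelian; in particular the $X_v$ pairwise commute, and by Frobenius $\vb{V}$ is the tangent bundle of a smooth holomorphic foliation on $M$ whose leaves are orbits of the complex Lie subgroup of $A$ generated by the flows of the $X_v$, and hence lie inside $A$-orbits.

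It remains to set up the honest product $M = M_0 \times A$. The $A$-orbits in $M$ are abelian subvarieties, and, in the absence of rational curves, the $A$-action has trivial stabilizers after passing to a finite unramified covering (permitted by the setting of Theorem~\ref{theorem:main}): a positive-dimensional stabilizer would, via linearization at a fixed point, produce a $\C^{\times}$-action on $M$ whose generic orbit would close up to a rational curve. The quotient $M_0 := M/A$ is then smooth projective and $M \to M_0$ is a principal $A$-bundle, which is trivialized by invoking the Fujiki--Lieberman splitting for compact K\"ahler manifolds with abelian $\operatorname{Aut}^0$; thus $M = M_0 \times A$. The factor $M_0$ inherits no nonzero holomorphic vector field, since any such field, combined with $\operatorname{Lie}(A)$, would strictly enlarge $\operatorname{Lie}(\operatorname{Aut}^0(M))$, contradicting $A = \operatorname{Aut}^0(M)$. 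In this product presentation the $A$-orbits are precisely the fibers $\{m_0\} \times A$, so every leaf of the foliation tangent to $\vb{V}$, lying in an $A$-orbit, lies inside some $\{m_0\} \times A$.

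The main obstacle is the final step: passing from ``$A$ acts on $M$'' to the literal product $M = M_0 \times A$. This requires both freeness of the $A$-action on a suitable finite cover and triviality of the resulting principal bundle, and is the place where the absence of rational curves is used in its most essential form, through a classical splitting theorem for compact K\"ahler manifolds whose $\operatorname{Aut}^0$ is an abelian variety.
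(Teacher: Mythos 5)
Your argument follows essentially the same route as the paper's: both reduce to the splitting theorem for non-uniruled smooth projective varieties (Lieberman's theorem, cited in the paper as \cite[Theorem~1.1]{Pereira/Touzet:2012}), which already asserts that such an $M$ splits as $M_0\times A$ with $A$ an abelian variety and $0=\cohomology{0}{M_0,TM_0}$, and both then observe that $\vb{V}$, being a trivial subbundle spanned by global holomorphic vector fields, must be tangent to the fibers $\{m_0\}\times A$. The paper handles this last step by a direct computation: in the product $M_0\times A=M_0\times(W/\Lambda)$ the $A$-component of any global vector field is a holomorphic map $M\to W$, hence constant by the maximum principle, and the $M_0$-component vanishes since $\cohomology{0}{M_0,TM_0}=0$. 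You instead route it through $\operatorname{Aut}^0(M)$ and the commutativity of its Lie algebra; that also works.

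The one point you should fix is the final step. The lemma asserts the splitting of $M$ itself, with no finite unramified covering, and the cited theorem delivers exactly that. Your detour --- finiteness of stabilizers, passing to a finite cover to make the $A$-action free, then trivializing the resulting principal $A$-bundle --- both weakens the conclusion (you only obtain the product after a cover, which is not what the lemma states) and is circular in structure, since the ``Fujiki--Lieberman splitting'' you invoke at the end to trivialize the bundle is precisely the statement $M=M_0\times A$ on the nose. Moreover, the sub-argument that a positive-dimensional stabilizer would yield a $\C^{\times}$-action is off: a positive-dimensional connected closed subgroup of the abelian variety $\operatorname{Aut}^0(M)$ is an abelian subvariety and contains no $\C^{\times}$; finiteness of stabilizers instead follows because an abelian variety admits no nontrivial linear representation on the tangent space at a fixed point. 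Deleting this paragraph in favour of a direct citation of the splitting theorem makes the proof match the statement.
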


\begin{proof}
The distribution is $\vb{V} = E \times^H V \cong \pr{E/H} \times V$ trivial.
So each element $v \in V$ determines a nowhere vanishing holomorphic tangent vector field on $M$.
Since $M$ is minimal, i.e., contains no rational curves, $M$ is not uniruled (i.e., does not have a rational curve through the generic point).
A theorem of Liebmann \cite[Theorem 1.1]{Pereira/Touzet:2012} says that any smooth projective variety $M$ which is not uniruled splits as a product $M_0\times A$ where $A$ is an abelian variety, so that $M_0$ has zero as its only holomorphic vector field.

Since $A$ is a complex torus, we can write $A=W/\Lambda$ where $W$ is complex Euclidean space and~$\Lambda\subseteq W$ is a lattice.
Note that $TA$ is trivial: $TA=A\times W$.
Every tangent vector to $A$ extends uniquely to a nowhere vanishing holomorphic tangent vector field, a constant map to~$W$.

Since $TM=TM_0\times TA$, every holomorphic vector field $v$ on $M$ splits into a sum $v=(u,w)$ of a holomorphic vector field $u$ tangent to $M_0\times\{*\}$ and one $w$ tangent to $\{*\}\times A$.
The map
\[
p\in M\mapsto w(p)\in W
\]
maps a compact complex manifold holomorphically to Euclidean space, so is constant by the maximum principle.
Hence $v=u+w$ has $w$ constant.

For each fixed $a_0\in A$, the map
\[
p\in M_0\mapsto u(m_0,a_0)\in T_p M_0,
\]
is a holomorphic vector field, so vanishes.
Hence the holomorphic vector fields on $M$ are precisely the constant vector fields $(0,w)$ for a constant $w\in W$.
Since our distribution is spanned by global holomorphic tangent vector fields, these are of this form $(0,w)$, hence tangent to the tori~$\{*\}\times A$.
\end{proof}

\begin{Corollary}\label{cor:splitting}
Take a smooth projective variety $M$ with a minimal geometry $E \to M$ modelled on a complex
homogeneous space \pr{X,G}, $X = G/H$.
Take the ant fibration $X \to X' = G/H'$.
Suppose that $H$ has finitely many components.
Then, after perhaps replacing $M$ by a~finite unramified covering space, $M$ splits
$M = M_0 \times A$ where $A$ is an abelian variety and $M_0$ is a~smooth complex projective
variety with no nonzero holomorphic vector fields: $0 = \cohomology{0}{M_0,TM_0}$.
Every leaf of the ant foliation on $M$ lies inside an abelian variety~${\{m_0\} \times A \subset M}$.
\end{Corollary}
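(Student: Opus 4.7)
The plan is to lift the Cartan geometry to a finite unramified cover on which the ant distribution becomes a subbundle of the globally generated bundle of Lemma~\ref{lemma:symmetry.splitting}, and then invoke that lemma directly. Set $\widehat{M} := E/H^0$; since $H/H^0$ is finite, $\widehat{M}\to M$ is a finite unramified cover, so $\widehat{M}$ is a smooth projective variety carrying the Cartan geometry $E\to\widehat{M}$ modelled on $(G/H^0,G)$. Any rational curve in $\widehat{M}$ would descend to a rational curve in $M$, of which there are none, so by Theorem~\ref{theorem:drop} the lifted geometry on $\widehat{M}$ is again minimal; it therefore suffices to establish the splitting for $\widehat{M}$.

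Write the ant fibration as $X\to\overline{X}=G/\overline{H}$ and put $W:=\overline{\LieH}/\LieH\subseteq\LieG/\LieH$, so that the ant distribution on $\widehat{M}$ is the associated subbundle $\widehat{\vb{W}}:=\prodquot{E}{W}{H^0}\subseteq T\widehat{M}$. By Lemma~\ref{lemma:ant.Stein}, $H^0$ is normal in $\overline{H}$, and hence $\LieH$ is an ideal of $\overline{\LieH}$, i.e., $[\LieH,\overline{\LieH}]\subseteq\LieH$. Consequently the infinitesimal adjoint action of $\LieH$ on $W$ is zero, and, $H^0$ being connected, its full adjoint action on $W$ is trivial. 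In the notation of Lemma~\ref{lemma:symmetry.splitting} applied to the lifted $H^0$-geometry, $W$ is therefore contained in the subspace $V\subseteq\LieG/\LieH$ of $H^0$-invariants, and $\widehat{\vb{W}}\subseteq\vb{V}$ as subbundles of $T\widehat{M}$.

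Finally, Lemma~\ref{lemma:symmetry.splitting} applied to $\widehat{M}$ furnishes a splitting $\widehat{M}=M_0\times A$ with $A$ an abelian variety and $\cohomology{0}{M_0,TM_0}=0$, together with the fact that $\vb{V}$ is tangent to a foliation whose leaves lie in the fibers $\{m_0\}\times A$. Since $\widehat{\vb{W}}\subseteq\vb{V}$, every connected leaf of the ant foliation on $\widehat{M}$ is contained in some such $\{m_0\}\times A$, giving the final assertion of the corollary. The only genuinely substantive point is the verification in the second paragraph that the one-step triviality of Corollary~\ref{corollary:trivial.conn} persists through the iterative construction of the ant fibration; this is exactly what Lemma~\ref{lemma:ant.Stein} delivers via normality of $H^0$ in the terminal stabilizer $\overline{H}$, with everything else being a routine assembly of prior results on the finite étale cover.
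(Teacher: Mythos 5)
Your proof is correct and follows essentially the same route as the paper's: pass to the finite unramified cover $E/H^0$ so the structure group becomes connected, observe via the normality of $H^0$ in $\overline{H}$ (from Lemma~\ref{lemma:ant.Stein}) that $H^0$ acts trivially on $\overline{\LieH}/\LieH$, and then apply Lemma~\ref{lemma:symmetry.splitting}. The paper's own proof is just a two-line sketch of exactly these steps; you have merely supplied the details (including the minimality of the lifted geometry) that it leaves implicit.
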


\begin{proof}
Replace $M$ by $E/H^0$, $H$ by $H^0$, with the same total space $E$ and the same Cartan connection, so that we can assume that $H$ is connected.
So $H$ acts trivially on $\LieH'/\LieH$.
\end{proof}

\section{Infinitesimally algebraic models}\label{section:inf.alg.models}
In this section, we define a class of complex homogeneous space which resemble complex linear algebraic homogeneous spaces.
We need a strong enough resemblance so that our theorems about Cartan geometries easily generalize to these models, even though the proofs use algebraic geometry methods.

Take a complex Lie group $G$ and some finite-dimensional holomorphic $G$-modules $V_i$.
A~\emph{reductive $($semisimple$)$ Levi quotient} of $G$ over $\{V_i\}$ is a quotient of complex Lie groups
\[
1 \to \urad{G} \to G \to \red{G} \to 1,
\]
and a complex Lie group morphism $\urad{G} \to \urad{\overline{G}}$ to a unipotent (solvable) complex
linear algebraic group, injective on Lie algebras, so that $\red{G}$ is a reductive (semisimple) complex
linear algebraic group and so that the action of $\urad{G}$ on each $V_i$ factors through a polynomial
action of $\urad{\overline{G}}$.
It follows that $\urad{G}$ preserves a filtration of $G$-modules on each $V_i$ by Lie's theorem
(see \cite[Theorem~3]{Serre:2001}), and acts trivially on the quotients $V_{i+1}/V_i$
\cite[Section 4.8]{Borel:1991}.
\begin{Example}

The group $G := \PSL{2,\C} \times \PSL{2,\Q{}}$ has no reductive or semisimple Levi quotient in the
usual sense of those terms (as defined in Vinberg \cite[p.~20]{Vinberg:1994}, for instance), nor does any finite index
subgroup \cite[Theorem 6.14]{Jacobson:1985}, but $G$ has a unique semisimple Levi quotient (also a~unique reductive Levi quotient) over $\LieG$:
\[
1 \to \PSL{2,\Q{}} \to G \to \PSL{2,\C} \to 1.
\]
\end{Example}
A complex homogeneous space $X=G/H$ is \emph{infinitesimally algebraic} if, after perhaps replacing $G$ and $H$ by finite index subgroups, $H$ has a reductive Levi quotient over $\LieG$, for which the action of $\red{H}$ on $\LieG/\LieH$ is faithful, and $G$ has a semisimple Levi quotient over $\LieG$.
\begin{Example}
Let $G$ be the set of all complex $3 \times 3$ matrices of the form
\[
\begin{pmatrix}
{\rm e}^a & 0 & b \\
0 & 1 & a \\
0 & 0 & 1
\end{pmatrix}
\]
and let $H \subset G$ be the subgroup of matrices of the form $a \in \pi {\rm i} \Z$, $b \in \Z$, i.e.,
\[
\begin{pmatrix}
(-1)^k & 0 & \ell \\
0 & 1 & \pi {\rm i} k \\
0 & 0 & 1
\end{pmatrix}
\]
for $k,\ell \in \Z$.
The quotient $X := G/H$ is the nontrivial holomorphic $\C^{\times}$-fiber bundle $\C^{\times} \to X
 \to \C^{\times}$ \cite[Section~11]{McKay:2015}.
We want to see that $(X, G)$ is infinitesimally algebraic, but not algebraic.
The adjoint action of an element
\[
g =
\begin{pmatrix}
{\rm e}^a & 0 & b \\
0 & 1 & a \\
0 & 0 & 1
\end{pmatrix}
\]
on
\[
A=
\begin{pmatrix}
\alpha & 0 & \beta \\
0 & 0 & \alpha \\
0 & 0 & 0
\end{pmatrix}
\in \LieG
\]
is
\[
\Ad_g A =
\begin{pmatrix}
\alpha & 0 & {\rm e}^a\beta-b\alpha \\
0 & 0 & \alpha \\
0 & 0 & 0
\end{pmatrix},
\]
so that in terms of vectors denoted
\[
\begin{pmatrix}
\alpha \\
\beta
\end{pmatrix},
\]
the adjoint representation factors through the solvable linear algebraic group of matrices
\[
\begin{pmatrix}
1 & 0 \\
-b & c
\end{pmatrix}
\]
with $c\ne 0$, by
\[
g \longmapsto
\begin{pmatrix}
1 & 0 \\
-b & {\rm e}^a
\end{pmatrix}.
\]
The semisimple quotient of $G$ is thus $G_{\rm ss} = 1$.
We let $\red{H} = \{\pm 1\}$, and map
\[
H \longrightarrow \red{H}, \qquad
\begin{pmatrix}
(-1)^k & 0 & \ell \\
0 & 1 & \pi {\rm i} k \\
0 & 0 & 1
\end{pmatrix}
 \longmapsto (-1)^k.
\]
The kernel $\urad{H}$ is the set of matrices of the form
\[
\begin{pmatrix}
1 & 0 & \ell \\
0 & 1 & 2\pi {\rm i} k \\
0 & 0 & 1
\end{pmatrix}
\]
for $k \in \Z$.
Hence the representation of $\urad{H}$ on $\Ad_G$ factors through
\[
\urad{H} \longrightarrow \urad{\overline{H}} := \left\{\begin{pmatrix}1&0\\-b&1\end{pmatrix}\right\}.
\]
The reader can check that the center of $G$ is the infinite discrete group
\[
\begin{pmatrix}
1 & 0 & 0 \\
0 & 1 & 2\pi {\rm i} k \\
0 & 0 & 1
\end{pmatrix},
\]
so $G$ is not isomorphic as a complex Lie group to any complex linear algebraic group.
Similarly, $H$ is discrete and infinite, so not a linear algebraic group.
As $G$ is connected, $G$ is the only finite index subgroup of~$G$.
\end{Example}

\begin{Lemma}
Suppose that $(X, G)$ is an effective connected complex homogeneous space $X=G/H$.
Suppose that $G$ has finitely many components and $H$ is isomorphic as a complex Lie group to a complex linear algebraic group.
Then $(X, G)$ is infinitesimally algebraic.
\end{Lemma}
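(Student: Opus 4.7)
The plan is to verify, after perhaps replacing $G$ by its identity component $G^0$ and $H$ by $H \cap G^0$ (each a finite index subgroup), the three requirements of infinitesimal algebraicity in turn: (a) $H$ has a reductive Levi quotient over $\LieG$; (b) $\red{H}$ acts faithfully on $\LieG/\LieH$; and (c) $G$ has a semisimple Levi quotient over $\LieG$.

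For (a), I would transport the algebraic Levi decomposition of $\overline{H}$ across the given isomorphism $H \cong \overline{H}$ to obtain $H = \red{H} \ltimes \urad{H}$ with $\red{H}$ reductive algebraic and $\urad{H}$ unipotent algebraic. The adjoint action $H \to \GL{\LieG}$ is a holomorphic representation of a complex linear algebraic group, hence algebraic by automatic algebraicity of finite-dimensional holomorphic representations; its restriction to $\urad{H}$ factors through a unipotent algebraic subgroup $\urad{\overline{H}} \subseteq \GL{\LieG}$, and this factored morphism is injective on Lie algebras since effectiveness of $(X,G)$ forces $\LieH$ to contain no nonzero $\LieG$-ideal.

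For (b), the kernel $N$ of $\red{H} \to \GL{\LieG/\LieH}$ is a normal algebraic subgroup of the reductive group $\red{H}$, hence itself reductive. Were $N^0$ positive-dimensional, it would contain a one-parameter torus $\C^{\times}$ acting holomorphically on $X$, fixing $x_0$, with trivial derivative at $x_0$; holomorphic linearization at a fixed point of a reductive action gives local coordinates in which $\C^{\times}$ acts linearly, and trivial derivative forces the linear action to be trivial, so $\C^{\times}$ fixes a neighborhood of $x_0$. Its fixed locus, a closed complex subvariety containing an open subset of the connected manifold $X$, is then all of $X$, contradicting effectiveness. Hence $N$ is finite, and applying the same linearization argument to each cyclic subgroup $\langle n \rangle$ shows each $n \in N$ acts trivially on $X$, so $n = 1$ and $N = 1$.

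For (c), the Levi--Malcev decomposition $\LieG = \Lie{s} \oplus \Lie{r}$ yields a connected closed normal radical $R \subseteq G$; the quotient $G/R$ has connected complex semisimple Lie algebra, hence is linear algebraic (the simply connected complex semisimple groups are all algebraic, and finite central quotients remain so). Set $\red{G} := G/R$ and $\urad{G} := R$. It remains to construct a morphism $R \to \urad{\overline{G}}$ into a solvable linear algebraic group, injective on Lie algebras, through which the $R$-action on $\LieG$ factors polynomially. The adjoint $\Ad|_R$ lies in an upper triangular subgroup of $\GL{\LieG}$ in some basis by Lie's theorem, but has Lie algebra kernel $Z(\LieG)$; this is remedied by combining $\Ad|_R$ with an Ado-type auxiliary representation of $\Lie{r}$ integrated to $R$ (perhaps on a finite-index subgroup), chosen so as to separate the central directions coming from the injection $Z(\LieG) \hookrightarrow \LieG/\LieH$ guaranteed by effectiveness. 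Taking the Zariski closure of the combined image produces the required $\urad{\overline{G}}$. The main obstacle is step (c): one must carefully integrate the Ado-type representation on $\Lie{r}$ to a genuine Lie group morphism on $R$, and verify that the combined morphism lands in a solvable algebraic target with polynomial action on $\LieG$.
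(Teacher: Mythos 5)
Your overall architecture --- checking the three clauses of the definition in turn --- matches the paper's, and your step (b) is sound: it is in effect a self-contained proof, by Bochner--Cartan linearization at the fixed point $x_0$, of the fact the paper simply cites from an earlier work of McKay (every reductive complex linear subgroup of $H$ acts faithfully on $\LieG/\LieH$). The paper goes somewhat further here, building an $H$-invariant filtration of $\LieG/\LieH$ on whose associated graded $\urad{H}$ acts trivially and proving that $\red{H}$ acts faithfully on that graded module (the form actually used later in Proposition~\ref{proposition:torus.fibration}); for the Lemma as literally stated your version suffices.

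There are, however, two genuine gaps. First, in step (a) you appeal to ``automatic algebraicity of finite-dimensional holomorphic representations'' of a complex linear algebraic group. That is false for unipotent groups: $t \mapsto {\rm e}^{t}$ is a holomorphic character of $\mathbb{G}_a = (\C,+)$ that is not polynomial, and, more to the point, a holomorphic representation of a unipotent group need not have unipotent image. So it is not automatic that $\Ad(\urad{H})$ lands, polynomially, inside a \emph{unipotent} algebraic subgroup of $\GL{\LieG}$ --- which is precisely what the definition of a reductive Levi quotient over $\LieG$ demands. This is the real content of clause (a); some argument (using effectiveness, or the invariant filtration, or the algebraic Levi decomposition $H = \red{H}\ltimes\urad{H}$ applied to $\LieG$ regarded as an algebraic $H$-module) is required, and you have not supplied one. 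Second, step (c) is by your own admission not a proof: you correctly observe that $\Ad|_{R}$ kills the centre of $\LieG$, which lies in $\Lie{R}$, but the proposed repair --- integrating an Ado-type representation of $\Lie{R}$ up to the group $R$ --- cannot work in general, because the radical may contain a positive-dimensional compact factor (a complex torus, e.g.\ when $G$ is an elliptic curve acting on itself by translation), and every holomorphic morphism from such an $R$ to an affine algebraic target is constant on the compact directions, hence never injective on Lie algebras. The paper's own proof does not attempt this construction at all (it merely ``invokes the usual semisimple Levi quotient''), so you have set yourself a strictly harder task than the published argument discharges; but as written your step (c) does not close, and the Lemma is not proved.
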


\begin{proof}
We can assume that $G$ is connected, because $G$ has finitely many components.
We invoke the usual semisimple Levi quotient \cite[p. 20]{Vinberg:1994}.

Every complex linear algebraic group has an algebraic Levi decomposition $H = \red{H} \ltimes \urad{H}$ into a maximal reductive and its unipotent radical \cite[Theorem 4.3]{Hochschild:1981}, and so has a reductive quotient (over all $H$-modules simultaneously).
Take an $H$-invariant filtration \[0 = F_0 \subset F_1 \subset \cdots
 \subset F_N = \LieG/\LieH\] so that $\urad{H}$ acts trivially on the associated graded.
Each $F_i$ is $H$-invariant, so $\red{H}$-invariant, so $F_i$ has a $\red{H}$-invariant splitting $F_i=F_{i-1} \oplus F_{i-1}^{\perp}$.
We claim that $\red{H}$ acts faithfully on the associated graded.
Suppose that some element $g \in \red{H}$ acts trivially on the associated graded.
Note that~$g$ acts trivially on $F_1$, since $F_1$ is a subspace of the associated graded.
Suppose by induction that~$g$ acts trivially on $F_{i-1}$.
If $v \in F_i$, split as $v=u+u^{\perp}\in F_{i-1}\oplus F_{i-1}^{\perp}$.
So
\begin{align*}
gv-v
&=
g\bigl(u+u^{\perp}\bigr)-u-u^{\perp}=
gu^{\perp}-u^{\perp}.
\end{align*}
But $gv-v \in F_{i-1}$ so $gu^{\perp}-u^{\perp} \in F_{i-1}$.
But $u^{\perp} \in F_{i-1}^{\perp}$ which is $\red{H}$-invariant, so $gu^{\perp}-u^{\perp} \in F_{i-1} \cap F_{i-1}^{\perp}=0$.
So $gv = v$.
Every reductive complex linear subgroup of $H$ acts faithfully on~$\LieG/\LieH$ \cite[Lemma~6.1]{McKay2013}, so $g = I$.
\end{proof}

\section{Review of stable vector bundles}\label{section:stability}

In this section, we recall notions of stability for holomorphic sheaves and for holomorphic principal bundles.
All of this material is discussed in detail, for algebraic varieties, in the standard textbook \cite[Chapter 1]{Huybrechts/Lehn:2010}, and for compact K\"ahler manifolds in the standard textbook \cite[Chapter V, Section 7]{Kobayashi:1987}.
For an elegant introduction to the concepts, see \cite[Chapter 7]{Atiyah.Bott:1983}. We will need a notion of stability to relate the Cartan connection of a Cartan
geometry to the existence of a~holomorphic connection on related principal bundles. It will be these
related principal bundles to which we can apply the Shafarevich fibration.

On any compact complex manifold $M$ with K\"ahler form $\omega$, each torsion free coherent sheaf~$V$
has \emph{degree} \cite[p.~168]{Kobayashi:1987}
\[
\deg{V} := \int_M c_1(V) \wedge \omega^{-1 + \dimC{M}},
\]
and \emph{slope} $\frac{\deg{V}}{\rank{V}}$. A torsion free coherent sheaf is
\begin{itemize}\itemsep=0pt
\item
\emph{stable} if every torsion free coherent subsheaf of lower rank has lower slope,
\item
\emph{semistable} if no torsion free coherent subsheaf of lower rank has greater slope,
\item
\emph{polystable} if semistable and a direct sum of stable sheaves (equivalently, a direct sum of
stable sheaves of same slope),
\item
\emph{pseudostable} if it admits a filtration
\[0 = V_0 \subset V_1 \subset \cdots \subset V_n = V\] of holomorphic vector
subbundles, so that all of $V_1/V_0, V_2/V_1, \dots$ are polystable of equal slope \cite[p. 168]{Kobayashi:1987}.
\end{itemize}
Suppose that $G$ is a complex Lie group with a semisimple Levi quotient, i.e., some finite index
subgroup $G_0 \subset G$ has a morphism of complex Lie groups
$1 \to G_{\rm u} \to G_0 \to G_{\rm ss} \to 1$ to a complex semisimple Lie group.
We can arrange that $G_{\rm ss}$ is connected, by replacing $G_0$ with a smaller but still finite index
subgroup of $G$.
Take a holomorphic principal right $G$-bundle $G \to P \to M$ on a compact K\"ahler manifold $M$.
Let $M_0 := P/G_0$ and let $P_{\rm ss} := \amal{P}{G_0}{G_{\rm ss}}$.
The bundle $G \to P \to M$ is \emph{pseudostable} if the associated holomorphic
vector bundle $\amal{P_{\rm ss}}{G_{\rm ss}}{\LieG_{\rm ss}} \to M_0$ is pseudostable.

\section{Bundles on tori}\label{section:bundles.on.tori}
In this section, we consider how to relate holomorphic connections on different principal bundles.
We need this because the standard theory of the Shafarevich fibration (see the standard reference~\cite{Zuo:1999}) is not developed for general principal bundles, but only those with particular structure groups.
\begin{Lemma}[{\cite[Theorem 4.1]{Biswas/Gomez:2008}}]\label{lemma:BG}
Take a holomorphic principal bundle $G \to P \to T$ on a~complex torus $T$, with structure group $G$.
Suppose that some finite index subgroup $G_0 \subset G$ has semisimple Levi quotient $1 \to G_{\rm s}
 \to G_0 \to G_{\rm ss} \to 1$.
The following are equivalent:
\begin{enumerate}\itemsep=0pt
\item[$(1)$]
the bundle admits a holomorphic connection,
\item[$(2)$]
the bundle is pseudostable with vanishing first and second Chern classes,
\item[$(3)$]
after perhaps lifting to a finite unramified covering torus, the associated bundle $G_{\rm ss}$ admits a holomorphic reduction of structure group to the Borel subgroup of $G_{\rm ss}$, so that the line bundle associated to any character of the Borel subgroup has vanishing first Chern class,
\item[$(4)$]
\begin{itemize}\itemsep=0pt
\item
the bundle is pseudostable,
\item
the bundle admits a unique holomorphic flat connection,
\item
every holomorphic section of any associated vector bundle is parallel for that holomorphic flat connection,
\item
every holomorphic connection on the bundle is parallel in that holomorphic flat connection.
\end{itemize}
\end{enumerate}
\end{Lemma}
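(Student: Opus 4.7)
The plan is to prove the four conditions equivalent by the cycle $(1) \Rightarrow (2) \Rightarrow (3) \Rightarrow (4) \Rightarrow (1)$, after first reducing to the case where the structure group equals $G_0$, and then to exploit the fact that $\fundamentalGroup{T} = \Z^{2\dimC{T}}$ is abelian, which forces any flat representation into the semisimple quotient to factor through a maximal torus and hence through a Borel subgroup. First I would pass to the finite unramified covering torus $\widetilde T := P/G_0$, so that one may assume $G = G_0$ has Levi quotient $1 \to G_{\rm s} \to G \to G_{\rm ss} \to 1$ with $G_{\rm ss}$ connected. Averaging over the finite deck group preserves holomorphic connections, pseudostability, Chern classes, and reductions of structure group, so no information is lost by this reduction.

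For $(1) \Rightarrow (2)$, a holomorphic connection forces all real Chern classes of any associated vector bundle to vanish (Atiyah--Chern--Weil), and a standard argument shows that the adjoint bundle is then pseudostable because its successive Harder--Narasimhan pieces must be preserved by the holomorphic connection. For $(2) \Rightarrow (1)$, the Atiyah class of the principal bundle lives in $\cohomology{1}{T, \text{At}(P)}$, and pseudostability with $c_1 = c_2 = 0$ combined with the triviality of $\nForms{1}{T}$ on a complex torus forces this class to vanish: one runs through the filtration of the adjoint bundle by polystable pieces of slope zero and checks that each extension class is annihilated.

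The core step is $(2) \Leftrightarrow (3)$, via the Donaldson--Uhlenbeck--Yau correspondence for principal bundles. Pseudostability of the $G$-bundle with $c_1 = c_2 = 0$ implies that the associated $G_{\rm ss}$-bundle is polystable with vanishing Chern classes and therefore arises from a unitary representation $\rho \colon \fundamentalGroup{T} \to K$ into a maximal compact subgroup $K \subset G_{\rm ss}$. Since $\fundamentalGroup{T}$ is abelian, the image of $\rho$ sits in a maximal torus of $K$, whose complexification is a maximal torus of $G_{\rm ss}$ contained in a Borel subgroup, giving the required reduction; the vanishing of $c_1$ for each associated character line bundle encodes precisely that $\rho$ is unitary on each character. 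The converse $(3) \Rightarrow (2)$ goes by building the flat connection directly from the torus/character data and checking that all associated bundles acquire a polystable structure with vanishing Chern classes.

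Finally, for $(3) \Rightarrow (4)$, the Borel reduction with vanishing character Chern classes yields a canonical flat holomorphic connection; the uniqueness of this flat connection, the parallelness of all holomorphic sections of associated bundles, and the parallelness of every holomorphic connection follow from rigidity on the torus, since any difference of two holomorphic connections is a holomorphic $1$-form with values in the adjoint bundle, and every such form is translation invariant by triviality of $\nForms{1}{T}$. The implication $(4) \Rightarrow (1)$ is tautological. The main obstacle will be $(2) \Rightarrow (3)$: one has to combine the principal-bundle version of the Uhlenbeck--Yau correspondence with careful handling of the possibly non-algebraic kernel $G_{\rm s}$, and verify that abelianness of $\fundamentalGroup{T}$ forces the holonomy in $G_{\rm ss}$ into a Borel even though a priori we only know it lands in the full semisimple group.
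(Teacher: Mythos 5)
The paper does not prove this statement at all: it is quoted verbatim as an external result, namely Theorem~4.1 of Biswas--G\'omez, \emph{Connections and Higgs fields on a principal bundle}, together with torus-specific addenda, and is used as a black box. So there is no in-paper argument to compare yours against; what can be assessed is whether your sketch would actually establish the lemma.

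Your outline gathers the right circle of ideas (Chern--Weil for $(1)\Rightarrow$ vanishing Chern classes, the correspondence between bundles with vanishing Chern classes and flat connections, abelianness of $\fundamentalGroup{T}$ pushing the monodromy into a Borel after a finite cover), but the pivotal step $(2)\Rightarrow(3)$ as you state it is wrong. Pseudostability does \emph{not} imply polystability: a pseudostable bundle is only an iterated extension of polystable pieces of equal slope, so you cannot invoke the Donaldson--Uhlenbeck--Yau correspondence to produce a \emph{unitary} representation $\rho\colon\fundamentalGroup{T}\to K$. The standard counterexample is the nontrivial extension of $\OO{}$ by $\OO{}$ on an elliptic curve: it is pseudostable with vanishing Chern classes, carries a flat holomorphic connection, but its monodromy is unipotent and non-unitary, and no unitary flat structure induces it. The correct route to the Borel reduction is to take the flat holomorphic (generally non-unitary) connection furnished by the pseudostable-with-vanishing-Chern-classes hypothesis, observe that its monodromy group is abelian because $\fundamentalGroup{T}$ is, pass to a finite covering torus so that the Zariski closure of the monodromy in $G_{\rm ss}$ becomes connected, and note that a connected abelian algebraic subgroup (torus times unipotent part, not necessarily reductive) lies in a Borel; flatness of the associated character line bundles then gives $c_1=0$. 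Your appeal to a maximal torus of a compact form proves something strictly weaker than what is needed and fails on the unipotent part of the monodromy.

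Two further gaps: condition $(3)$ concerns only the associated $G_{\rm ss}$-bundle, so in $(3)\Rightarrow(4)\Rightarrow(1)$ you must reconstruct a connection on the full $G$-bundle, which means handling the kernel $G_{\rm s}$ (solvable after the algebraic replacement in the paper's definition of Levi quotient); your sketch never touches this. And in $(3)\Rightarrow(4)$, uniqueness of the flat connection and parallelism of holomorphic sections do not follow from triviality of $\nForms{1}{T}$ as you claim: the difference of two connections is a holomorphic section of $\nForms{1}{T}\otimes\ad P$, i.e., a tuple of holomorphic sections of the adjoint bundle, and these need not vanish merely because $\nForms{1}{T}$ is trivial. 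These clauses require the specific structure theory of pseudostable bundles on tori (every holomorphic section of an associated bundle of a pseudostable bundle with vanishing Chern classes is flat), which is exactly the content one is importing from the cited reference.
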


\section{Cartan connections on torus fibrations}\label{section:Cartan.torus.fib}

In this section, we extend our theory of principal bundles on tori to principal bundles on torus fibrations as they occur in Cartan geometries.
\begin{Proposition}\label{proposition:torus.fibration}
Take a holomorphic torus fibration $M \to S$ over a complex manifold~$S$.
Take a holomorphic Cartan geometry $H \to E \to M$ with infinitesimally algebraic model $(X, G)$.
Then the quotient bundle $\red{H}\to E/\urad{H} \to M$ is the pullback of a holomorphic principal bundle $\red{H} \to \red{E} \to S$.
On each fiber of $M \to S$, the bundle $E$ pulls back to have a holomorphic flat connection with holonomy in $H_{\rm u}$, and this connection varies holomorphically with the fiber.
\end{Proposition}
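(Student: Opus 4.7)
The plan is to work fiberwise along $M\to S$ and apply Lemma~\ref{lemma:BG} twice. Fix a torus fiber $T\subseteq M$ and restrict the Cartan geometry. The Cartan connection, viewed on $E_G$, pulls back to give $E_G|_T$ a holomorphic (not in general flat) connection. Infinitesimal algebraicity of $(X,G)$ supplies the semisimple Levi quotient of $G$ that Lemma~\ref{lemma:BG} requires. Thus the $G$-bundle $E_G|_T\to T$ is pseudostable, admits a unique holomorphic flat connection $\nabla^{\flat}_T$, and has the property that every holomorphic section of any associated bundle is parallel for $\nabla^{\flat}_T$.

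Next I would argue that $\nabla^{\flat}_T$ preserves the $H$-reduction $E|_T\subseteq E_G|_T$. This reduction corresponds to a holomorphic section of the associated $G/H$-bundle, and realizing $G/H$ inside associated vector bundles through an equivariant embedding makes Lemma~\ref{lemma:BG}(4) apply to force the section to be parallel. Hence $\nabla^{\flat}_T$ restricts to a holomorphic flat connection on $E|_T$ and descends to a holomorphic flat connection on the $\red{H}$-bundle $(E/\urad{H})|_T$.

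I would then apply Lemma~\ref{lemma:BG} a second time, now to the $\red{H}$-bundle $(E/\urad{H})|_T$, using that $\red{H}$ is reductive and so has its own semisimple Levi quotient. Statement~(3) yields, after a finite unramified cover of $T$, a Borel reduction of the associated semisimple bundle with every character line bundle of vanishing first Chern class. On a torus, polystability with vanishing Chern classes combined with holomorphic flatness forces the flat $\red{H}$-bundle to become holomorphically trivial on the universal cover, so its holonomy in $\red{H}$ vanishes. Equivalently, the holonomy of the flat connection on $E|_T$ lies in the kernel $\urad{H}$, yielding the second assertion. Finally, the flat connections $\nabla^{\flat}_T$ vary holomorphically with $T$, because Lemma~\ref{lemma:BG}(4) characterizes them as the unique holomorphic flat connections attached to a holomorphic family; the resulting fiberwise trivializations of $(E/\urad{H})|_T$ glue to identify $E/\urad{H}$ with the pullback of a holomorphic principal $\red{H}$-bundle $\red{E}\to S$.

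The step I expect to be the main obstacle is the collapse of the holonomy on each torus fiber into $\urad{H}$: extracting \emph{trivial} holonomy in $\red{H}$, rather than merely solvable or abelian holonomy, from the Borel reduction with vanishing Chern class characters. This is the delicate point where polystability, holomorphic flatness, and the abelian fundamental group of $T$ must be combined — presumably via a Narasimhan--Seshadri style argument that reduces the polystable flat bundle on $T$ to a unitary representation and then observes that the additional rigidity coming from the Cartan origin of the bundle forces that representation into the unipotent part $\urad{H}$ of the reductive quotient.
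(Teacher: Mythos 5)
Your strategy---restrict to a torus fiber $T$, apply Lemma~\ref{lemma:BG} to $E_G|_T$ with its connection induced from the Cartan connection, then push the resulting flat connection into the $H$-reduction and kill the reductive part of its holonomy---is genuinely different from the paper's argument, and it has two gaps I do not see how to close. First, your claim that the unique flat connection on $E_G|_T$ preserves the reduction $E|_T\subset E_G|_T$ rests on ``realizing $G/H$ inside associated vector bundles through an equivariant embedding.'' A $G$-equivariant embedding of $G/H$ into a $G$-module exists only when $G/H$ is quasi-affine, which fails for basic infinitesimally algebraic models (for instance any flag variety $G/P$ of positive dimension, which is projective and so admits no embedding into affine space). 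The reduction is a section of a $G/H$-bundle, not of a vector bundle, so clause $(4)$ of Lemma~\ref{lemma:BG} does not apply to it as stated. Second, and more seriously, the step you yourself flag as the main obstacle---that the fiberwise holonomy lands in $\urad{H}$ rather than merely in $H$---does not follow from polystability, flatness and vanishing Chern classes on a torus: a flat bundle with reductive structure group on a torus given by a nontrivial unitary character is polystable with $c_1=0$ and yet has nontrivial reductive holonomy, and every such bundle even carries a holomorphic flat connection, so no contradiction arises from Lemma~\ref{lemma:BG} alone. The ``additional rigidity coming from the Cartan origin of the bundle'' that you invoke is precisely the missing idea, not a routine Narasimhan--Seshadri refinement. (The assertion that the fiberwise trivializations glue into a pullback from $S$ is also left unproved.)

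The input your sketch lacks is the soldering. The paper writes the Cartan connection modulo $\LieH$ in fiber-adapted coordinates on the torus fibration, obtaining an $H$-equivariant holomorphic map $f\colon E\to\GL{\LieG/\LieH}$ whose quotient is a holomorphic map $\overline{f}\colon M\to \GL{W}/\red{H}$ into an \emph{affine} variety (affineness is where reductivity of $\red{H}$ and the invariant filtration of $\LieG/\LieH$ enter). A holomorphic map from a compact torus to an affine variety is constant, so $\overline{f}$ factors through $S$; this simultaneously exhibits $E/\urad{H}$ as a pullback from $S$ and reduces $E$ over each fiber to a principal $\urad{H}$-bundle. Only at that point is Lemma~\ref{lemma:BG} applied---to the unipotent bundle---for which the holomorphic flat connection exists, is unique, and has holonomy in $\urad{H}$ by construction rather than by an a posteriori holonomy computation. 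To salvage your fiberwise approach you would need a substitute for this map to an affine quotient; Lemma~\ref{lemma:BG} by itself cannot deliver the unipotency of the holonomy.
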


\begin{proof}
Take an $H$-invariant filtration $0 = F_0 \subset F_1 \subset \dots \subset F_N = \LieG/\LieH$ so
that $\urad{H}$ acts trivially on the associated graded $W=\graded{*}{F}$.
Then $Z := \GL{W}/H = \GL{W}/\red{H}$ is a quasi-projective homogeneous variety
\cite[Theorem 12.2.1]{Springer:2009}, and indeed affine; see Mumford et al.\
\cite[Theorem~1.1]{Mumford/Fogarty/Kirwan:1994} and Procesi \cite[Theorem 2]{Procesi:2007}.

Denote the fibration $s \colon M \to S$.
For each $s_0 \in S$, let $M_{s_0} := s^{-1}\{s_0\}$.
For each point~${s_0 \in S}$, let $V_{s_0}$ be the vector space of translations of the fiber $M_{s_0}$, and let $\Lambda_{s_0}
 \subset V_{s_0}$ be the discrete subgroup of translations acting trivially on $M_{s_0}$, making a vector bundle $V \to S$ and a covering space $\Lambda \to S$ with $\Lambda \subset V$, so that $M=V/\Lambda$.

Since the problem is local on $S$, we can assume that there is a holomorphic section $m \colon S \to M$ of the
bundle $s \colon M \to S$.
We can assume that $S$ is an open subset of $\C^{n}$ with local holomorphic coordinates $z^i$, and take local sections $e_1, e_2,
 \dots, e_p$ of $\Lambda \to S$ so that they form a basis of local sections of $V$.
Write any element of the total space of $V$ as $w = w^Ae_A$.
On the total space of $V$, we have local holomorphic coordinates $z^i$, $w^A$.
Since $M=V/\Lambda$ is covered by~$V$, these are also local holomorphic coordinates on $M$.
Pull back the $1$-forms ${\rm d}z^i$, ${\rm d}w^A$ to $E$, where they are a basis of the semibasic $1$-forms.
The $1$-form $\omega+\LieH \in \nForms{1}{E} \otimes^H\!(\LieG/\LieH)$ is semibasic, so $\omega+\LieH = f_i {\rm d}z^i + f_A {\rm d}w^A$
for unique holomorphic functions $f_i, f_A \colon E \to \LieG/\LieH$.
Since $\omega$ is a linear isomorphism on each tangent space of $E$, $\omega+\LieH$ is an isomorphism of the pullback tangent bundle to $\LieG/\LieH$, i.e., if we identify the $1$-forms ${\rm d}z^i$, ${\rm d}w^A$ with $\LieG/\LieH$ by any fixed linear isomorphism, then\looseness=-1
\[
f := (f_i \ f_A) \colon\ E \to \GL{\LieG/\LieH}
\]
is an $H$-equivariant holomorphic map $r_h^*f = \Ad_h^{-1} \circ f$, $h \in H$.
Hence the quotient object
\[
\overline{f} \colon\ M = E/H \to Z = \GL{W}/\red{H}
\]
is a holomorphic map to an affine variety.
\samepage{So $\overline{f}$ is constant on each torus fiber, i.e., drops to a~map $\overline{f} \colon\ S \to Z$ giving maps
\[
\begin{tikzcd}
E \arrow{d} \arrow{r}{f} & \GL{W} \arrow{d} \\
S \arrow{r}{\overline{f}} & Z
\end{tikzcd}
\]
to identify $\red{E}=E/H_{\rm u}=s^* \GL{W}$, i.e., $\red{E}$ is pulled back over $S$.}

By taking a local reduction of structure group of $\red{E} \to S$ (over an open subset of $S$, which we can assume is $S$) we can reduce the structure group of $E \to M$ over that open subset of~$S$ to a holomorphic principal $\urad{H}$-bundle $\urad{H} \to \urad{E} \to M$.
By definition of an infinitesimally algebraic homogeneous space, $\urad{H}$ has a complex Lie group morphism $\urad{H} \to \urad{\overline{H}}$ to a~unipotent complex linear algebraic group, injective on Lie algebras.
Replacing $M$ by a~finite unramified covering space, we can assume that $\urad{\overline{H}}$ is connected.
Let ${\urad{\overline{E}} := \amal{\urad{E}}{\urad{H}}{\urad{\overline{H}}}}$.
The Atiyah classes of the bundles $\urad{E}$ and $\urad{\overline{E}}$ are identified, as the Lie algebras of the groups are identified.
On a complex torus, any holomorphic principal bundle with connected unipotent complex linear algebraic structure group has a unique holomorphic flat connection by Lemma~\ref{lemma:BG}.
Such a~connection is associated to a representation of the fundamental group, and so varies holomorphically.
\end{proof}

In cohomology, if we let $V^{p,q}_{s_0} = \cohomology{p,q}{M_{s_0}}$, i.e., $V^{1,0}$ is the complex dual of $V$, then $\Lambda^* \subset V^{1,0} \oplus V^{0,1}$ is a holomorphic variation of Hodge structure \cite[p. 249]{Voisin:2007a}.

If we have a holomorphic section $m \colon S \to M$, we get
\[
\begin{tikzcd}
0 \arrow{r} & V \arrow{r} & m^*TM \arrow{r} & TS \arrow{r} & 0.
\end{tikzcd}
\]
This sequence splits by taking the map $T_{s_0} S \longrightarrow T_{m(s_0)} M$ defined by
$v \longmapsto m'(s_0)v$, so
\[
m^* TM = TS \oplus V.
\]
Hence on Atiyah classes $a(m^*TM) = a(TS)+a(V)$.

\section{Large fundamental groups}\label{section:Large.fundamental.groups}
In this section, we begin the study of fundamental groups of varieties that develop to the model of a Cartan geometry.
We build up this theory until we find that if the fibers of a fibration in a~Cartan geometry have solvable holonomy for the Cartan connection, then they are tori.

A connected complex projective variety $M$ has \emph{large fundamental group} if every connected positive-dimensional closed
complex subvariety $Z \subset M$ with normalization $Z_0 \to Z$ has infinite fundamental group image $\fundamentalGroup{Z_0} \to \fundamentalGroup{M}$ \cite{Kollar:1993}.

\begin{Lemma}\label{lemma:dev.large}
Suppose that $M$ is a smooth projective variety admitting a minimal geometry modelled on a complex homogeneous space $(X, G)$.
Suppose that \rationalMap{Z}{M} is a meromorphic map from a reduced connected compact complex space, and that this map develops into $X$.
Then the map extends to a holomorphic map $Z \to M$ lying in a leaf of the ant foliation on $M$ and the developing map takes $Z$ to a fiber of the ant fibration.
Suppose that $Z \to M$ is a finite map.
Then $Z$ has large fundamental group.
\end{Lemma}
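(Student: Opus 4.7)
The plan is as follows. First, Theorem~\ref{theorem:drop} says that a minimal Cartan geometry on $M$ forces $M$ to contain no rational curves. A standard extension principle for meromorphic maps (any remaining indeterminacy, after resolving, would produce exceptional rational curves whose image in $M$ would itself be a rational curve or a point; since $M$ has none, the indeterminacy is empty) promotes the given meromorphic map to a holomorphic $f \colon Z \to M$. Writing $\widehat{f} \colon Z \to X$ for the developing map (since the map develops into $X$ we have $Z = \widehat{Z}$), the development isomorphisms of Section~\ref{subsection:Development} give $f^*\cb{M} \cong \widehat{f}^*\cb{X}$. Because $X$ is a complex homogeneous space, wedging fundamental vector fields yields a global nonzero section of $\acb{X}$, which pulls back to a section $\sigma$ of $f^*\acb{M}$. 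Since $M$ is minimal, $\cb{M}$ is nef, so after passing to a resolution of $Z$ the line bundle $f^*\cb{M}$ remains nef. An effective divisor whose opposite is nef on a projective variety is zero (intersect with $H^{\dim Z-1}$ for an ample $H$), so $\sigma$ vanishes nowhere and $\widehat{f}^*\cb{X}$ is trivial.

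With $\widehat{f}^*\cb{X}$ trivial, Lemma~\ref{lemma:in.the.fiber} places $\widehat{f}(Z)$ inside a fiber $F_1$ of the anticanonical fibration $X \to \acf{X}$. Such a fiber is itself a complex homogeneous space with trivial canonical bundle: by adjunction, $\cb{F_1} = \cb{X}|_{F_1} \otimes \det N_{F_1/X}$, and both factors are trivial ($\cb{X}|_{F_1}$ because $\acb{X}$ is pulled back from $\acf{X}$, and $N_{F_1/X}$ because it is the pullback of the tangent space to $\acf{X}$ at the image point). Hence $\widehat{f}^*\cb{F_1}$ is again trivial, and Lemma~\ref{lemma:in.the.fiber} applies once more within $F_1$. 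Iterating through the tower of anticanonical fibrations that defines the ant fibration places $\widehat{f}(Z)$ in a single ant fiber $F$. The isomorphism $f^*TM \cong \widehat{f}^*TX$ from development carries $df$ to $d\widehat{f}$, whose image lies in the tangent bundle of the ant fibers of $X$; pushing back, $df$ takes values in the ant distribution on $M$. Integrability of the ant foliation then forces $f(Z)$ to lie in a single leaf.

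For the last statement, assume $f$ is finite, and let $Z' \subseteq Z$ be a positive-dimensional connected closed subvariety with normalization $Z_0 \to Z'$. The restriction of $\widehat{f}$ to $Z_0$ lands in the ant fiber $F$, whose universal cover $G_0$ is, by Lemma~\ref{lemma:ant.Stein}, a connected simply connected complex Lie group, hence Stein and holomorphically embedded in some $\C^N$. Suppose for contradiction that $\pi_1(Z_0) \to \pi_1(Z)$ has finite image; then so does $\pi_1(Z_0) \to \pi_1(F)$, and pulling back $G_0 \to F$ along $Z_0 \to F$ yields a finite cover $\widetilde{Z}_0 \to Z_0$ with a holomorphic lift $\widetilde{Z}_0 \to G_0$. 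Compactness of $\widetilde{Z}_0$ and Steinness of $G_0$ force this lift to be constant, so $\widehat{f}|_{Z_0}$ is constant. Because the Cartan geometry on $M$ is flat, around each point of $f(Z_0)$ there is a local biholomorphism with a neighborhood in $X$ intertwining $f$ with $\widehat{f}$; constancy of $\widehat{f}|_{Z_0}$ then forces constancy of $f|_{Z_0}$. But $f|_{Z_0}$ is finite, so $Z_0$ is a point, contradicting positive-dimensionality. The main technical hurdle is the effective-anti-nef-is-zero step, which must handle the possibly singular and non-K\"ahler $Z$; resolution of singularities combined with the Moishezon property of $Z$ (inherited from the finite map to projective $M$) handles this in the finite-map regime.
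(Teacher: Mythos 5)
Your proposal follows essentially the same route as the paper: develop, identify the ambient tangent bundles, play the nef canonical bundle of $M$ against the globally generated anticanonical bundle of $X$ to trivialize both restrictions, descend through the anticanonical tower into an ant fiber (hence an ant leaf), and then use the Steinness of the universal cover of the ant fiber (Lemma~\ref{lemma:ant.Stein}) to kill any subvariety with finite fundamental-group image. The only point to correct is your appeal to flatness of the Cartan geometry on $M$ in the last step: the geometry is only assumed minimal, and only its pullback to the pancake $Z$ is flat; the implication ``$\widehat{f}|_{Z_0}$ constant $\Rightarrow$ $f|_{Z_0}$ constant'' should instead be drawn from the development isomorphism $f^*TM \cong \widehat{f}^*TX$, under which $df$ and $d\widehat{f}$ correspond, so that one vanishes exactly where the other does.
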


\begin{proof}
Pullback the bundles $E|_Z = G|_Z$, and so identify the tangent bundles $ TX|_Z= TM|_Z$.
Since the model $X$ is homogeneous, its tangent bundle is spanned by global sections, and so its anticanonical bundle as well, and so nef.
The canonical bundle of $M$ is nef.
Therefore, the canonical bundle of $\cb{M}$ restricts to a nef line bundle on $Z$ with nef dual bundle the restriction of $\acb{X}$.
Hence $\cb{M}$ restricts to a trivial bundle on $Z$ as does $\cb{X}$.
Wedging sections, $ TX|_Z$ is spanned by global sections, but global sections of $ TX|_Z= TM|_Z$ can not vanish anywhere, so~${TX|_Z= TM|_Z}$ is a trivial bundle.
If a section of $TX$ vanishes at some point of $X$, then by homogeneity some such section vanishes at any point of $X$, in particular at some point of $Z$.
Any section of $TX$ which vanishes at some point of $Z$ vanishes at all points of $Z$, since $\acb{X}$ is trivial on $Z$.
Write the map $Z \to M$ as $f$, and its development $Z \to X$ as $\widehat{f}$.
So at every point of $\widehat{f}(Z)$, precisely the same vector fields on $X$ vanish and precisely the same sections of~$\acb{X}$.
In other words, $Z \to X$ lies in an anticanonical fiber.
All holomorphic functions on that fiber pull back to constants on $Z$, so $Z \to X$ lies in an ant fiber.
So $Z \to M$ lies in an ant leaf.
Each ant fiber $(X_0, G_0)$ is a connected complex homogeneous space, and its universal covering space is a Stein complex Lie group by
Lemma~\ref{lemma:ant.Stein}.
So $Z \to X$ lifts to a holomorphic map to that Stein complex Lie group from some covering space of $Z$.

Suppose that $g \colon Y \to Z$ is a holomorphic map from a connected compact normal complex space.
Under the group morphism $\fundamentalGroup{Y} \to \fundamentalGroup{Z}$, suppose further that the preimage of~$\fundamentalGroup{\widehat{Z}}$ is a finite index subgroup of $\fundamentalGroup{Y}$.
After replacing $Y$ by a finite unramified covering space, we can assume that this image is trivial.
The map $Y \to Z \to X_0$ lifts to $Y \to \widehat{X}_0$, which is a~Stein manifold.
Hence $Y \to \widehat{X}_0$ is constant and so $Y \to X_0$ and $Y \to M$ are constant so~$Y$ is a point.
In particular, if $\fundamentalGroup{Y} \to \fundamentalGroup{Z}$ is finite, $Y$ is a point.
\end{proof}

\begin{Example}
A non-uniruled smooth projective variety with a holomorphic parabolic geometry has trivial ant foliation, since its model is a flag variety, hence has trivial ant fibration.
Therefore, a meromorphic map from a reduced connected compact complex space $Z$ develops to a minimal geometry if and only if that map is constant.
So it develops to an arbitrary holomorphic parabolic geometry, on a connected manifold, just when it lies in a fiber of the fibration to the associated minimal geometry.
Roughly speaking, once we drop out the rational curves, we can not develop anything.
\end{Example}

\begin{Lemma}\label{lemma:large}
Suppose that $M$ is a smooth projective variety bearing a minimal geometry modelled on a complex homogeneous space $(X, G)$.
Suppose that \rationalMap{Z}{M} is a meromorphic map from a positive-dimensional reduced connected compact complex space.
Then this meromorphic map extends uniquely to a holomorphic map.
Suppose that $Z \to M$ is a finite map.
Take the normalization $Z_0 \to Z \to M$.
Either
\begin{enumerate}\itemsep=0pt
\item[$(1)$]
the image of $\fundamentalGroup{Z_0} \to \fundamentalGroup{M}$ is infinite or
\item[$(2)$]
$Z$ has large fundamental group.
\end{enumerate}
\end{Lemma}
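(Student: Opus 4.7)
The lemma makes two independent assertions. For the meromorphic extension, I invoke Theorem~\ref{theorem:drop}: the existence of a minimal Cartan geometry on $M$ forces $M$ to contain no rational curves. A standard consequence of Hironaka's resolution of indeterminacies is that every meromorphic map from a compact complex space to a compact complex manifold free of rational curves is automatically holomorphic; otherwise, an exceptional divisor over the indeterminacy locus would be ruled, and its image in $M$ would supply a rational curve, a contradiction.

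For the dichotomy, I argue the contrapositive: assume $\fundamentalGroup{Z_0} \to \fundamentalGroup{M}$ has finite image and show that $Z$ has large fundamental group. Replace $Z_0$ by the finite unramified covering $Z_0^{\flat} \to Z_0$ corresponding to the kernel of the projection from $\fundamentalGroup{Z_0}$ onto its finite image in $\fundamentalGroup{M}$; then $Z_0^{\flat} \to M$ lifts to the universal cover of $M$. By flatness of the Cartan geometry, the pulled back Cartan connection on $Z_0^{\flat}$ has trivial holonomy, so $Z_0^{\flat} \to M$ is a pancake developing to the model $X$. Applying Lemma~\ref{lemma:dev.large} to this developing map yields that any holomorphic map $Y \to Z_0^{\flat}$ from a connected compact normal complex space whose image in $\fundamentalGroup{Z_0^{\flat}}$ is finite must be constant.

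Now let $W \subseteq Z$ be any positive-dimensional connected closed subvariety with normalization $W_0 \to W$. By the universal property of normalization, $W_0 \to Z$ lifts through $Z_0 \to Z$, and on passing to a connected component $Y$ of the fibered product $W_0 \times_{Z_0} Z_0^{\flat}$ we obtain a positive-dimensional connected compact normal complex space $Y$, finite over $W_0$ and mapping into $Z_0^{\flat}$. If one assumes $\fundamentalGroup{W_0} \to \fundamentalGroup{Z}$ has finite image, then since $Y \to W_0$ is a finite étale cover the image of $\fundamentalGroup{Y}$ in $\fundamentalGroup{Z}$ is also finite; tracing along $\fundamentalGroup{Y} \to \fundamentalGroup{Z_0^{\flat}} \to \fundamentalGroup{Z_0} \to \fundamentalGroup{Z}$ and using the finite-birational nature of the normalization $Z_0 \to Z$, one deduces that $\fundamentalGroup{Y} \to \fundamentalGroup{Z_0^{\flat}}$ also has finite image. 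By the previous paragraph, $Y$ is then a point, contradicting positive dimensionality of $W$; so $\fundamentalGroup{W_0} \to \fundamentalGroup{Z}$ has infinite image, establishing that $Z$ has large fundamental group.

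The principal obstacle is the final $\pi_1$-transfer step, since the normalization map $Z_0 \to Z$ can have nontrivial effect on fundamental groups, with neither injectivity nor surjectivity guaranteed. The key input is that, combined with the flat developing map furnished by the failure of the first alternative, the finite-birational nature of normalization (and the étale cover $Z_0^{\flat} \to Z_0$) suffices to control the relevant $\pi_1$-images and feed them into Lemma~\ref{lemma:dev.large}.
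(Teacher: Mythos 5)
The first assertion (holomorphic extension) is fine; your argument via Theorem~\ref{theorem:drop} and the absence of rational curves is exactly the content of the reference the paper cites for that step. The dichotomy, however, has two genuine gaps. The more serious one: you invoke ``flatness of the Cartan geometry,'' but the lemma hypothesizes only a \emph{minimal} geometry, with no flatness, and it is later applied (via Corollary~\ref{corollary:tori} inside Lemma~\ref{lemma:abelian.pancake}) to Cartan geometries that are not assumed flat. Without flatness, $Z_0^{\flat}\to M$ is not known to be a pancake --- the pullback of the curvature need not vanish --- so there is no developing map and Lemma~\ref{lemma:dev.large} does not apply to it. The paper's proof circumvents exactly this by a reduction you are missing: it picks a proper curve in the image of $Z\to M$ and replaces $Z$ by its preimage; since the curvature is a holomorphic $2$-form, it pulls back to zero on any curve, so every curve is automatically a pancake, and triviality of the holonomy of the resulting flat pullback connection then follows from the trivial $\pi_1$-image. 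That reduction to curves is the essential device of the paper's argument, and your proof does not work without it (or some substitute) under the stated hypotheses.

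Second, the $\pi_1$-transfer you yourself flag as ``the principal obstacle'' is asserted rather than proved, and it is false as stated: finiteness of the image of $\pi_1(Y)$ in $\pi_1(Z)$ does not imply finiteness of its image in $\pi_1\bigl(Z_0^{\flat}\bigr)$, because the map $\pi_1(Z_0)\to\pi_1(Z)$ induced by normalization is in general neither injective nor surjective, and its kernel can contain infinite subgroups carried by subvarieties (gluing a variety to itself along positive-dimensional loci produces such examples); ``finite and birational'' gives no control here. The transfer is also unnecessary: from finiteness of the image of $\pi_1(W_0)$ in $\pi_1(Z)$ you get, by composing with $\pi_1(Z)\to\pi_1(M)$, finiteness of the image of $\pi_1(W_0)\to\pi_1(M)$, and you can then run the lift-to-universal-cover/develop/Stein argument directly on the finite map $W_0\to M$ --- though again only after restricting to a curve, which returns you to the first gap. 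A minor further point: the lift of $W_0\to Z$ through $Z_0\to Z$ via the universal property of normalization fails when $W$ lies in the non-normal locus of $Z$; one should instead take a component of the fibered product over $Z$.
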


\begin{proof}
Meromorphic maps to $M$ from reduced compact complex spaces extend to holomorphic maps \cite[Lemma 26]{Biswas.McKay:2016}.
Replace $Z$ by its normalization.

Suppose that the image of $\fundamentalGroup{Z} \to \fundamentalGroup{M}$ is finite.
After replacing $M$ by a finite covering space, we can arrange that the image of $\fundamentalGroup{Z}
\to \fundamentalGroup{M}$ is trivial.

Suppose that $Z \to M$ is finite.
By Remmert's proper mapping theorem \cite[p.~213]{Grauert.Remmert:1984}, the image of $Z
 \to M$ is a projective subvariety;
pick a proper curve inside the image of~${Z \to M}$.
Replace $Z$ by the preimage of that curve inside $Z$.
The curvature of the Cartan geometry ${E \to M}$ vanishes on $Z$ because the curvature is a holomorphic $2$-form, so vanishes on curves.
Since $\fundamentalGroup{Z} \to \fundamentalGroup{M}$ has trivial image, the connection has no holonomy.
The Cartan connection integrates to a developing map to the model $Z \to X$.
By Lemma~\ref{lemma:dev.large}, the curve $Z$ has large fundamental group.
Now if $Z$ is not a curve, and contains a subvariety whose normalization has
finite fundamental group image in $Z$, pick a curve in that subvariety.
\end{proof}

\begin{Corollary}\label{corollary:tori}
Suppose that $M$ is a smooth projective variety bearing a minimal geometry.
Take a smooth projective variety $Z$ with trivial canonical bundle.
If there is a finite meromorphic map \rationalMap{Z}{M} then $Z$ has a finite unramified covering by an abelian variety.
\end{Corollary}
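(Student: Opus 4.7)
The plan is to combine Lemma~\ref{lemma:large} with the Beauville--Bogomolov decomposition theorem for smooth projective varieties with trivial canonical bundle.

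First, by Lemma~\ref{lemma:large}, the meromorphic map $\rationalMap{Z}{M}$ extends to a holomorphic finite map $f \colon Z \to M$. By the Beauville--Bogomolov--Yau decomposition theorem \cite{Bogomolov:1974}, after passing to a finite unramified cover $\widetilde{Z} \to Z$, we have a splitting
\[
\widetilde{Z} = A \times \prod_i Y_i \times \prod_j X_j,
\]
where $A$ is an abelian variety, the $Y_i$ are simply connected Calabi--Yau manifolds (in the strict sense), and the $X_j$ are simply connected hyperk\"ahler (irreducible holomorphic symplectic) manifolds. The goal is to show that each $Y_i$ and each $X_j$ must be a single point, so that $\widetilde{Z} = A$.

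Suppose, for contradiction, that some factor $W$, equal to one of the $Y_i$ or $X_j$, is positive-dimensional; then $W$ is simply connected, so $\fundamentalGroup{W} = 1$. Pick a point in the complementary factor to embed $W \hookrightarrow \widetilde{Z}$, and compose with the finite maps $\widetilde{Z} \to Z$ and $Z \to M$ to obtain a finite holomorphic map $W \to M$. Since $W$ is smooth, its normalization is $W$ itself; the image of $\fundamentalGroup{W} = 1$ in $\fundamentalGroup{M}$ is trivial, in particular finite. By Lemma~\ref{lemma:large}, $W$ must have large fundamental group. But $W$ is a positive-dimensional closed subvariety of itself whose normalization $W$ has trivial (hence finite) fundamental group image in $\fundamentalGroup{W}$, contradicting largeness.

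Therefore every $Y_i$ and $X_j$ is zero-dimensional, so $\widetilde{Z} = A$ is an abelian variety covering $Z$ by a finite unramified map. The only real subtlety is ensuring that the subvariety $W$ maps finitely to $M$, but this is immediate from composing the finite maps $W \hookrightarrow \widetilde{Z} \to Z \to M$; the main substantive input is the Beauville--Bogomolov decomposition, which provides the simply connected factors needed to exploit Lemma~\ref{lemma:large}.
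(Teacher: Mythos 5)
Your proof is correct and follows essentially the same route as the paper: extend the meromorphic map to a holomorphic finite map, apply the Bogomolov decomposition after a finite unramified cover, and use Lemma~\ref{lemma:large} to rule out any positive-dimensional simply connected factor. The only cosmetic difference is that you spell out the Calabi--Yau and hyperk\"ahler factors separately, whereas the paper lumps all simply connected factors into a single factor $B$.
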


\begin{proof}
Meromorphic maps to $M$ from reduced compact complex spaces extend to holomorphic maps \cite[Lemma 26]{Biswas.McKay:2016}.
After perhaps replacing by a finite unramified covering, any smooth projective variety $Z$ with trivial canonical bundle splits into a product $Z=A \times B$ where $A$ is an abelian variety and $B$ is a simply connected Calabi--Yau manifold \cite{Bogomolov:1974}.
Replace $Z$ by $\{a_0\} \times B$ for any $a_0 \in A$ to arrange that $Z$ is simply connected, contradicting
Lemma~\ref{lemma:large} unless $B$ is a~point.
\end{proof}

\begin{Lemma}\label{lemma:solvable.nef}
Suppose that $M$ is a connected compact K\"ahler manifold bearing a minimal geometry with solvable holonomy and either $M$ has nef canonical bundle or $M$ is Moishezon.
Then $M$ is a complex torus, after perhaps replacing $M$ by a finite unramified covering.
\end{Lemma}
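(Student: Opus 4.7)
The plan is to force $c_1(M)=0$ in real cohomology and then invoke Theorem~\ref{thm:cpt.Kaehler.c.1.0}. The Moishezon case reduces to the projective one by the discussion of Moishezon manifolds in Section~\ref{section:algebraic.geometry}, and a projective $M$ carrying a minimal holomorphic Cartan geometry has nef canonical bundle by Theorem~\ref{theorem:drop}, so I may assume throughout that $M$ is compact K\"ahler with $\cb{M}$ nef, carrying a flat $(X,G)$-Cartan geometry $H \to E \to M$ with solvable holonomy $\rho\colon\fundamentalGroup{M}\to G$.

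First I would reduce the structure group. After replacing $M$ by a finite unramified covering, I may assume the image of $\rho$ lies in the identity component $B$ of its Zariski closure, taken inside an algebraic envelope of $G$; then $B$ is a connected complex solvable Lie group, and the flat principal $G$-bundle $E_G \to M$ reduces to a flat principal $B$-bundle.

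Next, by Lie--Kolchin applied to the adjoint action of $B$ on $\LieG$, there is a complete $B$-stable flag $0=V_0\subset V_1\subset\cdots\subset V_N=\LieG$. This produces a filtration of the tractor bundle $\mathcal{T}:=\amal{E_G}{G}{\LieG}$ by flat holomorphic subbundles $\mathcal{T}_i$ with flat line bundle graded pieces $L_i := \mathcal{T}_i/\mathcal{T}_{i-1}$; in particular $\det\mathcal{T}$ is a flat line bundle, so $c_1(\mathcal{T})=0$ in $\cohomology{2}{M,\R}$. Combining this with the Cartan-connection exact sequence
\[
0 \to \amal{E}{H}{\LieH} \to \mathcal{T} \to TM \to 0,
\]
one obtains $c_1(\cb{M}) = c_1\bigl(\det\amal{E}{H}{\LieH}\bigr)$ in real cohomology. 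The subsheaf $\amal{E}{H}{\LieH}\subset\mathcal{T}$ inherits a filtration whose graded pieces embed as subsheaves of the $L_i$, and each is either zero or differs from a flat line bundle by an effective divisor contribution coming from torsion; the nef hypothesis on $\cb{M}$ is then used to rule out the latter contributions, forcing $c_1(\cb{M})=0$ in $\cohomology{2}{M,\R}$.

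With $c_1(M)=0$ in hand, $M$ is a compact K\"ahler manifold with a holomorphic Cartan geometry, so Theorem~\ref{thm:cpt.Kaehler.c.1.0} gives that $M$ has an unramified finite cover by a complex torus. The main obstacle is the Chern-class comparison in the previous paragraph: because $\Ad(E)=\amal{E}{H}{\LieH}$ is not in general a flat subbundle of $\mathcal{T}$, the filtration it inherits can have coherent graded pieces that are not locally free, and the rank-preservation combinatorics together with the nefness of $\cb{M}$ are what one needs in order to eliminate the resulting ambiguity in first Chern classes and conclude numerical triviality of $\cb{M}$.
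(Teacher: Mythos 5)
Your overall strategy --- force $c_1(M)=0$ and then quote Theorem~\ref{thm:cpt.Kaehler.c.1.0} --- is the same as the paper's, but the route you take to $c_1(M)=0$ stalls exactly where you say it does, and you leave that step as an acknowledged obstacle rather than closing it. Concretely: $\amal{E}{H}{\LieH}$ is a holomorphic subbundle of the tractor bundle $\mathcal{T}$ but not a flat one, so the intersections $\amal{E}{H}{\LieH}\cap\mathcal{T}_i$ need not be subbundles, and all you can say a priori is that each nonzero graded piece is a rank-one torsion-free subsheaf of the flat line bundle $L_i$, hence has $c_1=-D_i$ for some effective divisor class $D_i$. This does close: additivity of $c_1$ in the exact sequence gives $c_1(\cb{M})=c_1\bigl(\det\amal{E}{H}{\LieH}\bigr)=-\sum D_i$, an anti-effective class, and pairing with $\omega^{n-1}$ together with the nefness of $\cb{M}$ forces every $D_i=0$. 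You should write this out; as submitted, the proof ends with a description of what ``one needs'' rather than a proof of it. Note also that your reduction to a flat $B$-bundle via the Zariski closure of a monodromy representation uses flatness of the geometry, which the lemma does not assume; the statement is meant to apply to a holomorphic connection on $E_G$ whose holonomy group is merely solvable.

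The paper avoids the difficulty entirely by never computing $c_1$ of the kernel bundle. By Lie's theorem the solvable holonomy preserves a complete flag in $\LieG$, so $E_G\times^G\LieG$ is filtered by holomorphic subbundles whose line-bundle quotients each inherit a holomorphic connection, hence have $c_1=0$ and are nef; an extension of nef line bundles is a nef vector bundle, and $TM$ is a \emph{quotient} of $E_G\times^G\LieG$ via $\LieG\to\LieG/\LieH$, so $TM$ is nef and therefore $\acb{M}=\det TM$ is nef. Combined with $\cb{M}$ nef this gives $c_1(M)=0$ directly. Working with the surjection onto $TM$, rather than with the exact sequence through the kernel $\amal{E}{H}{\LieH}$, is what lets the paper bypass the subsheaf bookkeeping that is the weak point of your argument, and it does so without assuming the connection is flat.
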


\begin{proof}
By Lie's theorem (see \cite[Theorem 3]{Serre:2001}), every associated vector bundle of the bundle $E_G \to M$ has a filtration into a flag of holomorphic vector bundles, with quotient line bundles.
Those line bundles admit holomorphic connections, so are nef.
In particular, the adjoint bundle~$E_G \times^G \LieG$ has such a filtration.
Therefore, the adjoint bundle is an extension of nef line bundles, and so is a nef vector bundle.
Since $TM$ is a quotient bundle of the adjoint bundle:
\[
E_G \times^G \LieG = E \times^H \LieG \to E \times^H \pr{\LieG/\LieH} = TM,
\]
$\acb{M}$ is nef.
But $\cb{M}$ is nef (by Theorem~\ref{theorem:drop} if $M$ is Moishezon), so $\cb{M} = 0$, after perhaps replacing by a finite unramified covering, hence a complex torus after perhaps another finite unramified covering \cite{Biswas.McKay:2010}.
\end{proof}

Take a complex homogeneous space \pr{X,G} and a point $x_0 \in X$.
Take the ant fibration $X = G/H \to \overline{X} = G/\overline{H}$, with fiber \pr{X_0,\GZ} where
\[
X_0 = \overline{H}/H = \GZ/\Gamma
\]
with $\GZ = \overline{H}/K$ and $\Gamma = H/K$ where
\[
K = \bigcap_{g\in\overline{H}} gHg^{-1}
\]
contains $H^0$.
Denote the universal covering group as $\tGZ \to \GZ$ and let $\widetilde{\Gamma} \subset \tGZ$ be the
preimage of $\Gamma \subset \GZ$.

Take a complex Lie algebra morphism
$
\widetilde{f} \colon \C^{n} \to \LieGZ
$
from the abelian Lie algebra $\C^{n}$ and exponentiate into a complex Lie group morphism
$
\widetilde{f} \colon \C^{n} \to \GZ.
$
Suppose that $\Lambda \subset \C^{n}$ is a~lattice and that $\widetilde{f}\Lambda \subset \widetilde{\Gamma}$.
Let $T := \C^{n}/\Lambda$.
Quotient to a map $f \colon T \to X_0$.
Take an element~${g \in \GZ}$.
The map $gf \colon T \to X$ is a \emph{hoop}.

\begin{Lemma}\label{lemma:is.a.hoop}
Every holomorphic map from a connected compact K\"ahler manifold with trivial canonical bundle to a complex homogeneous space which pulls back the canonical bundle to be trivial is a hoop.
\end{Lemma}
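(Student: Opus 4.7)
The plan is to first reduce to the case where $Z$ is a complex torus, and then realize the resulting map as the exponential of an abelian Lie algebra morphism, up to a left translation. The starting observation is that $f(Z)$ lies in a single fiber $(X_0, \GZ)$ of the ant fibration: any section of $\acb{X}$ vanishing at a point of $f(Z)$ pulls back to a section of the trivial bundle $f^*\acb{X}$ vanishing somewhere, hence vanishing identically by connectedness of $Z$, so $f(Z)$ lies in one anticanonical fiber (this is exactly Lemma~\ref{lemma:in.the.fiber}). Iterating the same argument inside that fiber, which is precisely the construction of the ant fibration, places $f(Z)$ in a single ant fiber $X_0$; compactness and connectedness of $Z$ guarantee that every holomorphic function on each iterated fiber pulls back to a constant on $Z$, which is what each step of the iteration requires.

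For the reduction to a torus I would apply the Beauville--Bogomolov decomposition: after passing to a finite unramified cover, $Z = T \times B$ where $T$ is a complex torus and $B$ is a simply connected compact K\"ahler manifold with trivial canonical bundle. On each slice $\{t\} \times B$, the induced map to $X_0$ lifts, by simple connectedness of $B$, to the universal cover $\widetilde{X}_0$, which is a Stein complex Lie group by Lemma~\ref{lemma:ant.Stein}. A holomorphic map from a compact complex manifold to a Stein manifold is constant by the maximum principle, so $B$ collapses and we are reduced to $Z = T = \C^n/\Lambda$ mapping to $X_0$.

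The computation then trivializes both sides: $TT \cong T \times \C^n$ canonically, and $TX_0 \cong X_0 \times \LieGZ$ by the infinitesimal $\GZ$-action, as discussed after Lemma~\ref{lemma:all.points}. In these trivializations, $df$ is a holomorphic map $T \to \mathrm{Hom}(\C^n, \LieGZ)$ into a finite-dimensional vector space, hence constant; let $\phi \colon \C^n \to \LieGZ$ denote this value. Lifting to $\widetilde{f} \colon \C^n \to \tGZ$, this constancy reads $\widetilde{f}^{-1} d\widetilde{f} = \phi$. The Maurer--Cartan integrability condition $d\phi + \tfrac{1}{2}[\phi, \phi] = 0$ reduces, because $\phi$ is constant, to $[\phi(u), \phi(v)] = 0$ for all $u, v \in \C^n$; so $\phi$ is a Lie algebra morphism from the abelian $\C^n$ into $\LieGZ$. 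Integrating, and using uniqueness of solutions to the left-invariant ODE, $\widetilde{f}(z) = g_0 \exp_\phi(z)$ with $g_0 := \widetilde{f}(0) \in \tGZ$, and the descent of $f$ to $T = \C^n/\Lambda$ forces $\exp_\phi(\Lambda) \subseteq \widetilde{\Gamma}$. Projecting $g_0$ to $\GZ$ identifies $f$ with a hoop in the sense of the preceding definition.

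I expect the main obstacle to be the reduction to the torus case: Lemma~\ref{lemma:ab.var.in.fiber.2} was established only for projective $Z$ with finite $f$, whereas here $Z$ is only K\"ahler and $f$ is arbitrary. One must combine Beauville--Bogomolov with the Stein collapse argument sketched above, checking that the pulled-back canonical bundle remains trivial on each factor of the decomposition so that the ant-fiber containment is inherited by each simply connected summand. Once that reduction is in place, the Maurer--Cartan step is a routine computation.
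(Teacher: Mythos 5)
Your proposal is correct and follows essentially the same route as the paper's proof: containment in an ant fiber via triviality of $f^*\acb{X}$ and of pulled-back holomorphic functions, the Beauville--Bogomolov splitting $Z = T\times B$ after a finite cover, collapse of the simply connected factor $B$ by lifting to the Stein universal cover $\widetilde{X}_0$ (Lemma~\ref{lemma:ant.Stein}) and the maximum principle, and finally constancy of $df$ in the canonical trivializations of $TT$ and $TX_0$. The paper phrases the last step as the pullback of the Maurer--Cartan form being a constant-coefficient form and brackets of constant vector fields mapping to brackets, whereas you invoke the Maurer--Cartan integrability equation for the constant $\phi$; these are the same computation.
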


\begin{proof}
Take a connected compact K\"ahler manifold $Z$ with trivial canonical bundle and a holomorphic map $f \colon Z \to X$ so that $f^* \cb{X}=0$.
Clearly $f(Z)$ lies in an anticanonical fiber of~$X$, so without loss of generality we can replace $X$ by that fiber, and so assume that $X=G/\Gamma$ where $\Gamma \subset G$ is a discrete subgroup.
Since $Z$ is compact, every holomorphic function on $X$ pulls back to a constant on $Z$, so $Z$ lies in an ant fiber of $X$.
Again, without loss of generality, replace $X$ by that fiber.
After perhaps replacing $Z$ by a finite unramified covering space, split~$Z = T \times B$ where $T$ is a complex torus and $B$ is a simply connected compact K\"ahler manifold with trivial canonical bundle.
For each $t_0 \in T$ the map $f$ lifts to take $\{t_0\} \times B$ to~$\widetilde{X}_0$, a simply connected complex Lie group, so $B$ is a point.
The map $f \colon T \to X$ pulls back the right invariant Maurer--Cartan form ${\rm d}g g^{-1}$ to be $f^* {\rm d}g g^{-1}
 = c {\rm d}z$, a constant multiple of the translation invariant coframing.
The holomorphic vector fields on $T$, all of which are constant, map by $f'$ to holomorphic sections of the pullback tangent bundle (also constant), representing tangent vector fields, taking brackets (all zero) to brackets, so $f$ is a morphism of Lie algebras from an abelian Lie algebra.
Therefore, the lift $\widetilde{f} \colon \C^{n} \to G$ which satisfies $\widetilde{f}(0)=1$ (after suitable translation) is a complex Lie group morphism.
\end{proof}
\begin{Corollary}\label{corollary:hoops}
Suppose that $M$ is a non-uniruled smooth projective variety bearing a holomorphic Cartan geometry.
Take a finite map $Z \to M$ which develops to $X$, from a smooth projective variety $Z$ with trivial canonical bundle.
Then, up to replacing $Z$ by a finite unramified covering map, $Z$ is an abelian variety, and the developing map is a hoop.
\end{Corollary}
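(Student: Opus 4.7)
The plan is to assemble this corollary from two of the results established above: Corollary \ref{corollary:tori} will deliver the abelian variety structure on $Z$, and Lemma \ref{lemma:is.a.hoop} will identify the developing map as a hoop. Since $M$ is non-uniruled and carries a holomorphic Cartan geometry, Theorem \ref{theorem:drop} forces this geometry to be minimal. The finite map $Z \to M$ together with the assumption $\cb{Z}=0$ then meets the hypotheses of Corollary \ref{corollary:tori}, so $Z$ admits a finite unramified covering by an abelian variety. Replacing $Z$ by this covering, we may assume $Z$ itself is an abelian variety.

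The key remaining step is to show that the developing map $\widehat{f}\colon Z \to X$ pulls back $\cb{X}$ to a trivial line bundle, which is the one nontrivial hypothesis needed to invoke Lemma \ref{lemma:is.a.hoop}. Section~\ref{subsection:Development} provides a canonical isomorphism $f^* TM \cong \widehat{f}^* TX$, and taking top exterior powers gives $f^* \cb{M} \cong \widehat{f}^* \cb{X}$. Since $M$ is minimal, $\cb{M}$ is nef, so its pullback $f^* \cb{M}$ is nef on $Z$. On the other hand, homogeneity of $X$ means $TX$ is globally generated by the Killing vector fields from the $G$-action, so $\acb{X}$ is globally generated by their wedges, and hence $\widehat{f}^* \acb{X}$ is globally generated, in particular nef. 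Thus $\widehat{f}^* \cb{X}$ is both nef and anti-nef, so numerically trivial; the existence of a nonzero section of $\widehat{f}^* \acb{X}$ then yields an effective divisor in a numerically trivial class on the projective variety $Z$, which must vanish, forcing $\widehat{f}^* \cb{X}$ to be trivial.

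With this triviality in hand, Lemma \ref{lemma:is.a.hoop} applies to $\widehat{f}\colon Z \to X$ directly and concludes that the developing map is a hoop. I expect the most delicate point to be the passage from numerical triviality to actual triviality of $\widehat{f}^*\cb{X}$, which relies on having both the nef/anti-nef comparison and a nonzero globally generating section coming from the wedges of Killing fields; every other step is straightforward assembly of prior results.
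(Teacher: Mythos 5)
Your proof is correct and is essentially the argument the paper intends: the corollary is stated without a written proof, the assembly being exactly Corollary~\ref{corollary:tori} for the abelian covering, the nef/anti-nef argument forcing $\widehat{f}^*\cb{X}$ to be trivial (already carried out verbatim in the proof of Lemma~\ref{lemma:dev.large}), and Lemma~\ref{lemma:is.a.hoop} for the hoop. The only point worth making explicit is that the finite unramified covering of $Z$ still develops to the model (its holonomy factors through that of $Z$, which is trivial), so Lemma~\ref{lemma:is.a.hoop} does apply to the composite developing map.
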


\begin{Example}
If $M = \C^{n}/\Lambda_0$ is a complex torus and $X = \C^{n}/\Lambda_1$ is another complex torus and $G = X$, then $M$ has the obvious flat \pr{X,G}-geometry, unique up to linear transformation, with identity map as developing map.
If, after such a linear transformation, we can find a linear subspace $V \subset \C^{n}$ containing a
lattice $\Lambda \subset \Lambda_0 \cap \Lambda_1$, then $V/\Lambda$ develops to the model by the identity map.
\end{Example}

\begin{Lemma}\label{lemma:abelian.pancake}
Take a non-uniruled smooth projective variety $M$ with a holomorphic Cartan geometry $E \to M$ with model $(X, G)$.
Take a rational dominant map $\rationalMap{M}{S}$.
Suppose that on the general fiber of that map, the pullback bundle of $E_G$ admits a holomorphic connection, perhaps not equal to the Cartan connection, with holonomy acting on $\LieG$ as a solvable group of linear transformations.
Then the general fiber admits a finite unramified covering by an abelian variety.
Moreover, $\cb{M}$ is trivial on the general fiber.
If the connection is flat, then the holonomy on each fiber is abelian.
\end{Lemma}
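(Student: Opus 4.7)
The plan is to transplant the nef adjoint bundle argument from the proof of Lemma~\ref{lemma:solvable.nef} to the general fiber, substituting Corollary~\ref{corollary:tori} for the appeal to Theorem~\ref{thm:cpt.Kaehler.c.1.0} made there. I first resolve $\rationalMap{M}{S}$ to a morphism over a suitable open subset of $S$ and take a general fiber $F \subseteq M$; then $F$ is smooth and its normal bundle $N_{F/M}$ is trivial (pulled back from the tangent space to $S$ at the base point), so $\cb{F} \cong \cb{M}|_F$. The hypothesized holomorphic connection on $E_G|_F$ induces a holomorphic connection on the adjoint bundle $\amal{E_G|_F}{G}{\LieG}$ whose holonomy acts solvably on $\LieG$. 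By the Lie--Kolchin theorem there is a complete flag of $\LieG$ invariant under this holonomy, yielding a filtration of $\amal{E_G|_F}{G}{\LieG}$ by holomorphic sub-bundles preserved by the connection with line bundle quotients that each inherit a holomorphic connection. Since a holomorphic line bundle admitting a holomorphic connection is nef, $\amal{E_G|_F}{G}{\LieG}$ is nef as an iterated extension of nef line bundles.

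The Cartan geometry identifies $TM|_F$ as the quotient of $\amal{E_G|_F}{G}{\LieG} = \amal{E|_F}{H}{\LieG}$ by the sub-bundle $\amal{E|_F}{H}{\LieH}$, so $TM|_F$ and hence $\acb{M}|_F$ are nef. Non-uniruledness of $M$ and Theorem~\ref{theorem:drop} give $\cb{M}$ nef, so $\cb{M}|_F$ is nef together with its dual, and hence is numerically trivial; triviality of $N_{F/M}$ transfers this to $\cb{F}$. On a smooth projective variety, a numerically trivial canonical bundle is torsion in the Picard group (Beauville--Bogomolov), so after a finite unramified cover $F' \to F$, the bundle $\cb{F'}$ is trivial. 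The composition $F' \to F \hookrightarrow M$ is then a finite holomorphic map from a smooth projective variety with trivial canonical bundle into $M$, which bears a minimal Cartan geometry by Theorem~\ref{theorem:drop}. Corollary~\ref{corollary:tori} therefore produces a finite unramified cover of $F'$ by an abelian variety; composing covers proves the first conclusion, while the identity $\cb{M}|_{F'} = \cb{F'} = \OO_{F'}$ proves the second.

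For the flat case, a flat holomorphic connection on $E_G|_F$ has holonomy equal to the image of a representation $\rho \colon \fundamentalGroup{F} \to G$; pulling back to the abelian-variety cover $\widetilde{F} \to F$, $\fundamentalGroup{\widetilde{F}}$ is free abelian of rank $2 \dim F$, so the restricted image is abelian. The principal obstacle is passing from numerical triviality of $\cb{F}$ to outright triviality after a finite unramified cover, which requires the Beauville--Bogomolov torsion result for numerically trivial canonical bundles; once that is in hand, Corollary~\ref{corollary:tori} (built from hoops and the ant fibration in earlier sections) absorbs the remaining work.
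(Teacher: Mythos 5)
Your proof follows essentially the same route as the paper's: Lie's theorem (in your case Lie--Kolchin) produces a holonomy-invariant filtration of the adjoint bundle with line-bundle quotients carrying holomorphic connections, hence nef, so the adjoint bundle and therefore $\acb{M}$ restricted to the general fiber are nef; combined with nefness of $\cb{M}$ from non-uniruledness this forces the canonical bundle of the fiber to be numerically trivial, after which Corollary~\ref{corollary:tori} yields the abelian-variety cover and the abelian-holonomy claim follows from the fundamental group of that cover. The one place you diverge is that you explicitly bridge the gap from numerical triviality of $\cb{F}$ to actual triviality after a finite unramified cover (via the torsion statement for numerically trivial canonical bundles), a step the paper's proof passes over silently when it declares the canonical bundle ``trivial''; this is added care rather than a different approach.
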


\begin{proof}
Take a general fiber $M_{s} $ of $\rationalMap{M}{S}$ and let $E_{G_{\rm s} } := E_G|_{M_{s} }$.
By Lie's theorem \cite[Theorem 3]{Serre:2001}, the adjoint vector bundle $\ad{E_{G_{\rm s} }} := \amal{E_{G_{\rm s}}}{G}{\LieG}$ of the bundle $E_{G_{\rm s} }
\to M_{s} $ has a filtration into a flag of holomorphic vector bundles, with quotients being line bundles.
The holomorphic connection induces a holomorphic connection on each quotient line bundle, so each quotient line bundle is nef.
Therefore, the adjoint bundle is an extension of nef line bundles, and so is a nef vector bundle.
Since the tangent bundle $TM|_{M_{s} }$ is a quotient bundle of the adjoint bundle:
\[
\amal{E_G}{G}{\LieG} = \amal{E}{H}{\LieG} \to \amal{E}{H}{\pr{\LieG/\LieH}} = TM,
\]
$\acb{M}$ is nef along $M_{s} $.
But $\cb{M}$ is nef on $M$, since $M$ is not uniruled, so also nef on $M_{s} $ and so $\cb{M} = 0$ on $M_{s} $.

The general fiber $M_{s} $ is smooth.
By adjunction, the anticanonical bundle $\acb{M_{s}} = - \cb{M} |_{M_{s}}$ is nef.
But $\cb{M_{s}}$ is also nef as every subvariety of $M$ is minimal by Theorem~\ref{theorem:drop}.
Hence the general fiber $M_{s}$ has trivial canonical bundle.
By Corollary~\ref{corollary:tori}, $M_{s}$ admits a finite unramified covering by an abelian variety, i.e., the general fiber of $\rationalMap{M}{S}$ admits a finite unramified covering by an abelian variety.
Since the fundamental group of any abelian variety is abelian, its image under the holonomy morphism is also abelian.
\end{proof}

\section{Review of the Shafarevich fibration}\label{section:Shafarevich}
In this section, we review the definition and basic properties of the Shafarevich fibration (a well known construction in algebraic geometry).

Take any compact K\"ahler manifold $M$ equipped with a morphism $\rho \colon \fundamentalGroup{M} \to G$ of groups where $G$ is a complex semisimple Lie group.
A \emph{Shafarevich map} for $\rho$ is a meromorphic map~${\Shaf[\rho]{} \colon \rationalMap{M}{\Shaf[\rho]{M}}}$ to a~normal complex space, surjective and with connected fibers, so that a~connected closed subvariety $Z \subset M$ through a very general point is mapped to a~point by the Shafarevich map just when the normalization of $Z$ has fundamental group with finite image inside $G$.
Every group morphism $\rho \colon \fundamentalGroup{M} \to G$ with Zariski dense image in a~complex semisimple Lie group $G$ has a Shafarevich map \cite[Definition~2.13]{Campana/Claudon/Eyssidieux:2015}, \cite[p.~105]{Zuo:1999} with the additional property that, after replacing $M$ by a finite covering space, the target $\Shaf[\rho]{M}$ is bimeromorphic to the total space of a smooth fibration of complex tori over an algebraic variety of general type \cite[Theorem~4]{Campana/Claudon/Eyssidieux:2015}.

Suppose that $\Sigma \subset \fundamentalGroup{M}$ is the kernel of a representation of $\fundamentalGroup{M}$.
Also after replacing $M$ by a finite covering space, there is a map, also called a Shafarevich map,
$
\Shaf[\Sigma]{} \colon \rationalMap{M}{\Shaf[\Sigma]{M}}
$
which contracts to a point precisely those subvarieties $Z \subset M$ passing through a very general point whose fundamental group has a finite index subgroup lying inside the given subgroup $\Sigma \subset \fundamentalGroup{M}$ \cite[Theorem 1.3]{Katzarkov:1999}, \cite{Katzarkov:1997}; for compact K\"ahler manifolds the existence of this Shafarevich map follows
from \cite[Theorem 4]{Campana/Claudon/Eyssidieux:2015} and \cite{Katzarkov:1997}.
A representation $\rho \colon \fundamentalGroup{M} \to G$ of the fundamental group of a compact K\"ahler manifold is \emph{big} if the associated Shafarevich map is a birational map.

Take a complex Lie group $G$, with identity component $G^0$, whose component group $G/G^0$ admits a faithful finite-dimensional representation.
Then $G$ admits a faithful finite-dimensional holomorphic representation just when $G^0$ is the semidirect product of a simply connected solvable Lie group and a connected reductive complex
Lie group \cite[Theorem~16.3.7]{Hilgert.Neeb:2012}.

Take a compact irreducible reduced complex space $M$ containing no rational curves.
Suppose that $X$ is a compact irreducible reduced complex space. Then every meromorphic map $\rationalMap{X}{M}$ extends to a unique holomorphic map $X \to M$, by Hironaka's Chow lemma \cite[Corollary~2.9]{Grauert:1994}.

The \emph{Iitaka conjecture}, also called the \emph{$C_{mn}$ conjecture}, for a dominant rational morphism $\rationalMap{X}{Y}$ of projective varieties with connected fibers, claims that $\kappa_X \ge \kappa_Y + \kappa_{X_y}$ for the general fiber $X_y$.
The Iitaka conjecture is verified for surjective morphisms for which all sufficiently high powers of the canonical bundle of the general fiber are spanned by global sections \cite[Corollary~1.2]{Kawamata:1985}.

\section{Applying the Shafarevich fibration}
In this section, the most difficult and important section, we prove that, in any Cartan geometry, the Shafarevich fibration is globally defined, not just a meromorphic map.
\begin{Example}
If a Cartan geometry $H \to E \to M$ has induced bundle $G \to E_G
\to M$ which is pseudostable for some Higgs field, then $G \to E_G \to M$ admits a
flat holomorphic connection \cite[Theorem~1.1]{Biswas/Gomez:2008}.
The flat connection might not equal the Cartan connection.
\end{Example}

\begin{Theorem}\label{theorem:JR}
On a connected smooth projective variety $M$, take a holomorphic Cartan geometry $H\to E \to M$ modelled on an infinitesimally algebraic complex homogeneous space~\pr{X,G}.
Suppose that the induced $G$-bundle $G \to E_G \to M$ also admits a flat holomorphic connection.
Then $M$ belongs to a tower of holomorphic fibrations $M \to M' \to S$ where~$M'$ has no rational curves.
The Cartan geometry is lifted from~$M'$.
The map $M' \to S$ is a holomorphic fibration of abelian varieties.
On every fiber, the Cartan connection is a holomorphic connection on the pullback $G$-bundle with solvable holonomy.
The base $S$ has ample canonical bundle.
\end{Theorem}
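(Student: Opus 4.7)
The plan is to combine the dropping theorem with the Shafarevich fibration attached to the flat holomorphic connection on $E_G$. First, by Theorem~\ref{theorem:drop}, the Cartan geometry on $M$ drops to a minimal geometry on a smooth projective variety $M'$ via a holomorphic fiber bundle $M \to M'$ with flag variety fibers; $M'$ then contains no rational curves and has nef canonical bundle. Because every flag variety is simply connected, $\fundamentalGroup{M} = \fundamentalGroup{M'}$, and the flat holomorphic connection on $E_G \to M$ gives a holonomy representation $\rho \colon \fundamentalGroup{M'} \to G$ whose associated flat $G$-bundle, pulled back from $M'$, matches $E_G$ under the lift-drop correspondence. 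This lets us work with a flat $G$-bundle on $M'$ and apply Shafarevich theory there.

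Next I would use that $(X,G)$ is infinitesimally algebraic. After replacing $M'$ by a finite unramified cover (Section~\ref{section:inf.alg.models}), $G$ admits a semisimple Levi quotient $G \to \red{G}$, so $\rho$ induces $\bar{\rho} \colon \fundamentalGroup{M'} \to \red{G}$ into a complex semisimple Lie group. The Shafarevich machinery of Section~\ref{section:Shafarevich} applied to $\bar{\rho}$ produces a meromorphic map $\Shaf[\bar{\rho}]{} \colon \rationalMap{M'}{S}$ contracting exactly those subvarieties whose fundamental-group image in $\red{G}$ is finite. After one more finite cover, the general fiber of the Shafarevich map has fundamental group mapping trivially into $\red{G}$, so the holonomy of the flat connection on that fiber lies in the solvable kernel $\urad{G}$.

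Now Lemma~\ref{lemma:abelian.pancake}, applied to $\rationalMap{M'}{S}$ equipped with this solvable-holonomy flat connection, forces the general fiber to admit a finite unramified covering by an abelian variety and gives $\cb{M'}$ trivial on the general fiber. The same Lie-theoretic filtration argument used in the proofs of Lemmas~\ref{lemma:solvable.nef} and~\ref{lemma:abelian.pancake} shows that the Cartan connection itself also has solvable holonomy on each fiber: its adjoint bundle filters into line bundles whose induced holomorphic connections lie in the nef cone, and on an abelian variety fiber this filtration is controlled by $\urad{G}$. To promote the meromorphic Shafarevich map to an honest morphism $M' \to S$, one exploits the absence of rational curves on $M'$: meromorphic maps to and from such $M'$ extend holomorphically by \cite[Lemma~26]{Biswas.McKay:2016} and Hironaka's Chow lemma, and any exceptional divisor of a resolution of indeterminacy would be uniruled and contracted by the Shafarevich map, contradicting that $M'$ has no rational curves.

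The hardest step is showing that $S$ has ample canonical bundle. By \cite[Theorem~4]{Campana/Claudon/Eyssidieux:2015}, the Shafarevich target is, after finite cover, bimeromorphic to a smooth torus fibration over a variety of general type; here the torus factor has already been absorbed into the abelian fibers of $M' \to S$, leaving $S$ itself of general type. Since $S$ inherits the absence of rational curves from $M'$, it is minimal, and $\cb{S}$ is simultaneously big and nef. The Iitaka $C_{n,m}$ conjecture is available in this setting, because the fibers of $M' \to S$ are abelian varieties with trivial, hence abundant, pluricanonical bundles; it yields $\kappa_S = \kappa_{M'}$, and together with semiampleness on $S$ obtained from abundance in the absence of rational curves this upgrades $\cb{S}$ from big-and-nef to ample. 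The main delicate point throughout is keeping the several finite unramified covers compatible so that the final tower $M \to M' \to S$ simultaneously satisfies all the claims in the statement.
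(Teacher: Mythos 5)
Your outline tracks the paper's strategy (drop to $M'$, Shafarevich fibration for the monodromy of the flat connection pushed into a semisimple quotient, abelian general fibers via Lemma~\ref{lemma:abelian.pancake}, ampleness of $\cb{S}$ from the absence of rational curves), but several of the steps you rely on do not go through as stated. First, you apply the Shafarevich machinery directly to $\bar{\rho}\colon \fundamentalGroup{M'}\to G_{\rm ss}$; the results quoted in Section~\ref{section:Shafarevich} (and Zuo's theorem, which is what supplies the crucial fact that $S$ is of general type) require the representation to have \emph{Zariski dense} image in a semisimple group. The paper therefore passes to the Zariski closure $L$ of the holonomy in $G_{\rm ss}$ and then to its semisimple Levi quotient $L_{\rm ss}$ before invoking Shafarevich theory. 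Your substitute for general type --- that in \cite[Theorem~4]{Campana/Claudon/Eyssidieux:2015} ``the torus factor has already been absorbed into the abelian fibers'' --- is an assertion, not an argument: nothing you have said rules out a positive-dimensional torus factor surviving in $S$ itself. Relatedly, ``$S$ inherits the absence of rational curves from $M'$'' is false for images in general (e.g., $\operatorname{Sym}^2C$ of a genus-$2$ curve contains a rational curve while $C\times C$ does not); the paper gets this by first producing a holomorphic section $S\to M$ from Koll\'ar's Theorem~6.3 and the extension of rational maps, which embeds $S$ into $M$.

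The second serious gap is holomorphy of $\rationalMap{M'}{S}$ and the statement that \emph{every} fiber is an abelian variety. Your argument that an exceptional divisor of a resolution of indeterminacy ``would be uniruled\dots contradicting that $M'$ has no rational curves'' does not work: the rational curves covering that divisor live on the blow-up and are contracted into the indeterminacy locus of $M'$, so they produce no rational curves in $M'$ (nor, a priori, in $S$). The paper's actual route --- and the technical heart of the proof --- is to verify the Iitaka conjecture for the fibration (abelian general fibers), identify the Shafarevich map with the Iitaka fibration, deduce via \cite[Theorem 4.4]{Lai:2011} that $\cb{M}$ is globally generated so that the Iitaka fibration is an honest morphism, and then run a long equidimensionality/irreducibility/reducedness argument (using Kawamata, Zariski's main theorem, and the section) to show all fibers, not just general ones, are abelian varieties; you omit all of this. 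Finally, invoking ``abundance in the absence of rational curves'' to get semiampleness of $\cb{S}$ appeals to an open conjecture; what is actually available is that $\cb{S}$ is big and nef, hence semiample by the base point free theorem, after which Kawamata's result that the contracted locus is covered by rational curves finishes the ampleness claim exactly as in the paper.
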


\begin{proof}
Replace $M$ by $M'$ without loss of generality, so assume that $M$ contains no rational curves.
Replace $M$ by a finite unramified covering space, so that we can replace $G$ by a finite index subgroup,
and therefore assume that $G$ has a semisimple Levi quotient over $\LieG$
\[
1 \to G_{\rm s} \to G \to G_{\rm ss} \to 1.
\]
The flat holomorphic connection induces a flat holomorphic connection on the quotient $G_{\rm ss} \to E_{G_{\rm ss}} :=
 E/G_{\rm s}\to M$.
Let $L \subset G_{\rm ss}$ be the complex algebraic Zariski closure of the holonomy group of that connection.
This connection induces a flat holomorphic connection on a principal subbundle $L\to E_L \to M$,
$E_L \subset E_{G_{\rm ss}}$.
It therefore also induces a flat holomorphic connection on the quotient bundle $L_{\rm ss} \to E_{L_{\rm ss}}
 = E_L/\sqrt{L} \to M$, where $1 \to \sqrt{L} \to L \longrightarrow
L_{\rm ss} \to 1$ is the semisimple Levi quotient \cite[Theorem 4.3]{Hochschild:1981}.
Denote by~${\rho \colon \fundamentalGroup{M} \to L_{\rm ss}}$ the induced representation.

After perhaps replacing $M$ by a finite unramified covering space, the representation $\rho \colon
\fundamentalGroup{M}\allowbreak\to L_{\rm ss}$ lifts from a big representation $\fundamentalGroup{S} \to L_{\rm ss}$,
where $S := \Shaf[\rho]{M}$ and $S$ is a projective variety of
general type \cite[Theorem 5 (and remarks following)]{Zuo:1999}.
The Shafarevich fibration~${\rationalMap{M}{S}}$ is defined and proper on a Zariski dense open subset of $M$
\cite[Theorem 4.1]{Kollar:1993}.
The general fiber has finite unramified covering on which the holonomy of $E_{L_{\rm ss}}\to M$ is trivial, i.e., the holonomy of $E_L \to M$ lies in $\sqrt{L}$, i.e., the holonomy of the $E_G$-connection lies in the preimage of $\sqrt{L}$ in $G$, i.e., an extension of $\sqrt{L}$ by $G_{\rm s}$.
By Lemma~\ref{lemma:abelian.pancake}, the general fiber admits a~finite unramified covering by an abelian variety, with abelian holonomy image and $\cb{M}$ is trivial on the general fiber.

The general fiber of $\rationalMap{M}{S}$ has a finite unramified covering by an abelian variety, so the Iitaka conjecture is verified for $\rationalMap{M}{S}$: $\kappa_M \ge \kappa_S = \dim{S}$.
Since $\kappa_M \ge \dim{S}\ge 0$, we know that $M$ has an Iitaka fibration.
The general fiber $M_{s}$ of $\rationalMap{M}{S}$ admits a finite unramified covering by an abelian variety and $\cb{M}$ is trivial on $M_{s}$, so $M_{s}$ lies in a fiber of the Iitaka fibration, giving a rational map $\rationalMap{S}{\Ii{M}}$ making a commutative diagram
\[
\begin{tikzcd}
M \arrow[<->]{r}\arrow[->, dashed,densely dashed]{d} & M \arrow[->, dashed,densely dashed]{d} \\
S \arrow[->,dashed,densely dashed]{r} & \Ii{M}.
\end{tikzcd}
\]
The rational map $\rationalMap{S}{\Ii{M}}$ is dominant, as the top row of the commutative diagram is dominant.
By dominance, $\dim{S} \ge \dim{\Ii{M}} = \kappa_M \ge \kappa_S = \dim S$.
Consequently $\rationalMap{S}{\Ii{M}}$ is a~birational map identifying $\rationalMap{M}{S}$ with the Iitaka fibration
\cite[Theorem 10.6]{Iitaka:1982}, i.e., the Shafarevich fibration is a model of the Iitaka fibration.

Since the generic fiber of the Iitaka fibration has canonical bundle spanned by global sections, so does the total
space \cite[Theorem 4.4]{Lai:2011}, i.e., the canonical bundle of $M$ is spanned by global sections.
Hence the Iitaka fibration is holomorphic, i.e., we can arrange that $M \to S$ is holomorphic.

The canonical bundle of $M$ restricts to a nef line bundle on every fiber of $M \to S$, since~$M$ has canonical bundle spanned by global sections.
Some power of the canonical bundle $K_M$ is pulled back to $M$ from $M \to S$ \cite[Theorem 2.1.26]{Lazarsfeld:2004}.
Hence that power is trivial on the fibers, and so $\acb{M}$ is also nef on every fiber of $M \to S$.
Therefore, all fibers of $M \to S$ have the same dimension \cite[Theorem 2]{Kawamata:1991}.

Take any resolution of singularities $S_0 \to S$ and pullback $M$ to get a dominant morphism~${M_0 \to S_0}$
birational to $M \to S$ with smooth $M_0$ and $S_0$.
After replacing $M$ with a~finite unramified covering space, there is some resolution of singularities $S_0 \longrightarrow
S$ over which~${M \to S}$ is birational to a holomorphic fibration (indeed, a smooth morphism, which is also a $C^{\infty}$ fiber bundle \cite[Definition 5.7]{Kollar:1993}) with fibers abelian varieties, with smooth total space, over a smooth variety of general type, equipped with a holomorphic
section \cite[Theorem~6.3]{Kollar:1993}:
\[
\begin{tikzcd}
\dot{M} \arrow[->]{d} \arrow[<->, dashed,densely dashed]{r} & M_0 \arrow[->]{r}\arrow[->]{d} & M \arrow[->]{d}\\
\dot{S} \arrow[<->, dashed,densely dashed]{r} & S_0 \arrow[->]{r} & S.
\end{tikzcd}
\]
Rational maps to $M$ extend to holomorphic maps, since $M$ contains no rational curves.
Extend the rational map $\rationalMap{\dot{M}}{M}$ to a holomorphic map $\dot{M} \to M$.
Up to birational isomorphism, there is a section $\dot{S} \longrightarrow
 \dot{M}$ \cite[Theorem 6.3]{Kollar:1993}, and so a rational section $\rationalMap{S}{M}$.
Extend the rational section to a holomorphic map $S \to M$, and hence a holomorphic section.
This section maps curves in $S$ to isomorphic curves in $M$; but $M$ contains no rational curves, and so $S$ contains no rational curves.
Hence meromorphic maps to $S$ extend to be holomorphic.
Holomorphically extend $\rationalMap{\dot{S}}{S}$ and $\rationalMap{M}{S}$:
\[
\begin{tikzcd}
\dot{M} \arrow[->,shift left]{r}\arrow[->]{d} & M \arrow[->]{d} \arrow[->, dashed,densely dashed,shift left]{l} \\
\dot{S} \arrow[->,shift left]{r} & S\arrow[->, dashed,densely dashed,shift left]{l}.
\end{tikzcd}
\]

By Zariski's main theorem (see \cite[Corollary 11.4]{Hartshorne:1977}), the map $\dot{M} \to M$ has connected fibers, as does $\dot{S} \to S$.
Hence $\dot{M} \to \dot{S}$ is also an Iitaka fibration.
Each fiber of $\dot{M}\to \dot{S}$ is an abelian variety mapping holomorphically to a fiber of $M \to S$.

On the generic fiber $\dot{M}_{\dot{s}}$, this map $\dot{M}_{\dot{s}}\to M_{s}$ is birational with exceptional locus covered in rational curves.
But neither fiber contains rational curves.
Therefore, the generic fiber $\dot{M}_{\dot{s}}$ of~$\dot{M}\to \dot{S}$ is mapped isomorphically to a fiber
of $M\to S$.

Suitable powers of the canonical bundles of $\dot{M}$ and $M$ are spanned by global sections, pulled back from sections of line bundles on $\dot{S}$ and $S$ respectively.
All sections on $M$ and on $\dot{M}$ are pulled back from $S$ and $\dot{S}$ respectively.
So the vanishing locus of any such section is a union of fibers.

The holomorphic map $\dot{M} \to M$ is an isomorphism except on the ramification divisor, an effective divisor given by the vanishing of the determinant of the derivative of $\dot{M} \to M$.
Applying the derivative of $\dot{M} \to M$ to sections of the canonical bundle on $M$, we get sections of the canonical bundle on $\dot{M}$, with divisor the sum of the pullback canonical divisor and the ramification divisor.
But the sections vanish on unions of fibers.
Hence the ramification divisor of $\dot{M}\to M$ is the preimage of some effective divisor on $\dot{S}$.

Away from the image in $S$ of the support of that divisor in $\dot{S}$, our section of $M \to S$ strikes a smooth fiber in a single smooth point, since $\dot{M} \to M$ is biholomorphic on those fibers.
All fibers have intersection number 1 with the section, and so all fibers are smooth near the image of the section, and the image of the section is smooth.
The variety $S$ is smooth, since the image of the section is isomorphic to $S$.

Since the fibers of $M \to S$ are all of the same dimension, at any point of any fiber at which the reduction of the fiber is smooth, we can pick a transverse complex submanifold, a~local holomorphic section of $M \to S$.
All nearby fibers have intersection number 1 with the local section, and so with our fiber: the generic point of every fiber of $M
\to S$ is reduced.

If some fiber $M_{s}$ of $M \longrightarrow
S$ is reducible, its preimage in $\dot{M}$ is reducible.
As the fibers of~${\dot{M} \longrightarrow
M}$ are connected, the preimage in $\dot{M}$ of $M_{s}$ is a connected reducible subvariety.
Since the fibers of $\dot{M} \to \dot{S}$ are irreducible, the preimage of $M_{s}$ in $\dot{M}$ is a union of fibers over a~connected reducible subvariety of $\dot{S}$.
This subvariety of $\dot{S}$ maps to a point in $S$.
Because the subvariety is connected, all of the fibers over the subvariety map to the same component of~$M_{s}$.
But $\dot{M} \to M$ is surjective, since it is holomorphic and birational.
Therefore, all fibers of~$M \to S$ are irreducible.

The fibers $\dot{M}_{\dot{s}}$ of $\dot{M} \to \dot{S}$ all have the same images in homology inside $M$, all nonzero as they are algebraic cycles.
So the derivative of the holomorphic map $\dot{M}_{\dot{s}} \to M$ drops rank on a ramification divisor.
All holomorphic sections of a suitable positive power of the canonical bundle of $M$ are pulled back from sections of a suitable line bundle on $S$.
Find a section of that suitable line bundle, nonzero at $s$, and pullback to get a section of $\cb{M}$ nowhere zero near $M_{s}$.
Take a local holomorphic section of the canonical bundle of $S$ defined and nonzero at $s$, and divide the two sections to get a holomorphic section of the relative canonical bundle of $M \to S$ defined near $M_{s}$.
Plugging the derivative of $\dot{M}_{\dot{s}} \to M$ into this section gives a section of the canonical bundle of the abelian variety $\dot{M}_{\dot{s}}$, not vanishing somewhere and therefore not vanishing anywhere as the canonical bundle of an abelian variety is trivial.
Therefore, $\dot{M}_{\dot{s}} \to M$ is a~local biholomorphism on every fiber, taking each fiber $\dot{M}_{\dot{s}}$ to a fiber $M_{s}$.
The map $\dot{M} \to M$ is birational, so the map $\dot{M}_{\dot{s}} \to M_{s}$ on each fiber is a biholomorphism to its image.

Take a local holomorphic section of $M \longrightarrow
 S$ transverse to the image of a fiber $\dot{M}_{\dot{s}}$.
The section strikes the generic nearby fiber of $M \to S$ in a single smooth point, since $\dot{M} \longrightarrow
M$ is biholomorphic on generic fibers.
Generic fibers have intersection number 1 with the section, and so all fibers have intersection 1 with the section, so are smooth and reduced near the image of the section.
Hence all fibers of $M \longrightarrow
S$ are everywhere smooth and reduced and irreducible, and so the maps $\dot{M}_{\dot{s}} \to M_{s}$ are all isomorphisms.

Since $S$ has big canonical bundle, its Iitaka fibration $S \longrightarrow
 \Ii{S}$ is birational, with exceptional fibers covered by rational curves \cite{Kawamata:1991}.
But $S$ contains no rational curves, so there are no exceptional fibers.
Since rational maps to $S$ extend to become holomorphic, the Iitaka fibration~${S \longrightarrow
 \Ii{S}}$ has a holomorphic inverse.
The exceptional locus of the canonical morphism is then also covered by rational curves, so is empty, so $S$ has ample canonical bundle.
\end{proof}

\section{Finite holonomy}

\begin{Example}\label{example:finite.holonomy}
Suppose that \pr{X',G} is a complex homogeneous space and that $X'$ is a complex torus $X' = \C^{n}/\Lambda$.
Take a finite index subgroup $\Lambda_0 \subset \Lambda$.
Let $M' := \C^{n}/\Lambda_0 \to X' = \C^{n}/\Lambda$ be the obvious projection, and pullback the geometry to $M'$, i.e., let $E \longrightarrow
 M'$ be the pullback $H$-bundle
\[
\begin{tikzcd}
E \arrow{r}\arrow{d} & G \arrow{d} \\
M' \arrow{r} & X'.
\end{tikzcd}
\]
The map $M' \longrightarrow
 X'$ is the developing map of a flat holomorphic Cartan geometry modelled on~$(X', G)$: the pullback of the model geometry.
The holonomy morphism has finite image in $G$.
Suppose that $X \to X'$ is a $G$-equivariant holomorphic map of complex homogeneous $G$-spaces, a complex
homogeneous bundle over the complex torus, say $X = G/H$ and $X' = G/H'$, and suppose that $H/H'$ is a flag variety.
Then the quotient $M := E/H$ is the lift of $E \to M'$ to a~flat holomorphic $(X, G)$-geometry.
\end{Example}

\begin{Theorem}\label{theorem:finite.holonomy}
Take a smooth projective variety $M$ and a flat holomorphic Cartan geometry on $M$ modelled on a complex homogeneous space \pr{X,G}.
If the holonomy morphism of the geometry has finite image then, after perhaps replacing $M$ by a finite unramified covering space, the geometry is constructed as in the above Example~{\rm \ref{example:finite.holonomy}}.
\end{Theorem}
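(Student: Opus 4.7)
The plan is to reduce, via successive finite unramified covers, to the case of a flat Cartan geometry on a complex torus developing to a torus model, and then match the resulting data with Example~\ref{example:finite.holonomy}. First, since the holonomy has finite image, passing to the cover corresponding to its kernel gives a finite unramified cover on which the holonomy becomes trivial; thus the developing map is a globally defined holomorphic map $f\colon M\to X$. Next, applying Theorem~\ref{theorem:drop} and replacing $M$ by a further finite unramified cover if necessary, we may assume $\pi\colon M\to M'$ is a holomorphic fiber bundle with flag variety fibers over a smooth projective variety $M'$ with no rational curves, and the Cartan geometry on $M$ is the lift of a minimal $(X',G)$-geometry on $M'$, where $X\to X'$ is the corresponding $G$-equivariant flag bundle. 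Flag varieties are simply connected, so $\pi_1(M)=\pi_1(M')$ and the dropped geometry on $M'$ still has trivial holonomy, yielding a developing map $f'\colon M'\to X'$ compatible with $f$ under $\pi$.

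Next I identify $M'$ and $X'$ as complex tori. By Lemma~\ref{lemma:dev.large} applied to the identity map $M'\to M'$, the canonical bundle $\cb{M'}$ is trivial and $f'(M')$ lies in an ant fiber $X'_0\subseteq X'$. But the developing map of a flat Cartan geometry is a local biholomorphism, and $M'$ is compact while $X'$ is connected, so $f'$ is surjective; this forces $X'_0=X'$, i.e., $X'$ equals its own ant fiber. By Lemma~\ref{lemma:ant.Stein}, the universal cover $\widetilde{X'}$ is then a simply connected complex Lie group, Stein by Lemma~\ref{lemma:Stein.G}, and $X'=\widetilde{X'}/\Gamma$ for some discrete subgroup $\Gamma\subseteq\widetilde{X'}$. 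By Bogomolov's decomposition, after yet another finite unramified cover $M'=A\times B$ with $A$ an abelian variety and $B$ a simply connected Calabi--Yau manifold. By Lemma~\ref{lemma:is.a.hoop}, $f'$ is a hoop, and its restriction to any slice $\{a_0\}\times B$ lifts to a holomorphic map from the compact manifold $B$ into the Stein Lie group $\widetilde{X'}$, hence is constant; so $B$ is a point and $M'=A$ is a complex torus. The hoop lifts to a Lie group morphism $\C^n\to\widetilde{X'}$ which is a local biholomorphism between simply connected abelian complex Lie groups of the same dimension, hence an isomorphism. Therefore $\widetilde{X'}=\C^n$, so $X'=\C^n/\Lambda$ is a complex torus and $M'=\C^n/\Lambda_0$ for a finite index sublattice $\Lambda_0\subseteq\Lambda$.

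To conclude, the flat $(X',G)$-geometry on $M'$ is the pullback of the model geometry on $X'$ along the covering map $f'$, since a flat Cartan geometry is determined up to isomorphism by its developing pair (cf.\ Section~\ref{subsection:Development}); lifting along the inclusion $H\subseteq H'$ recovers the original geometry on $M=E/H$ exactly as in Example~\ref{example:finite.holonomy}. I expect the main obstacle to be the step that upgrades the \emph{a priori} containment $f'(M')\subseteq X'_0$ in some ant fiber to the equality $X'=X'_0$, and then further to the identification $\widetilde{X'}=\C^n$; this requires combining the local biholomorphism property of the flat developing map with compactness of $M'$ and the Stein structure of ant fibers to eliminate all non-torus homogeneous possibilities for $X'$.
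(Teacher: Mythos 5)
Your proposal is correct and follows essentially the same route as the paper's proof: pass to a finite cover to kill the holonomy, use the tension between the nef canonical bundle of the base and the globally generated anticanonical bundle of the model to trivialize $\cb{M'}$, observe the developing map is an open-and-closed (hence surjective covering) map into an ant/anticanonical fiber, and invoke Lemma~\ref{lemma:is.a.hoop} to identify everything with an isogeny of tori. The only cosmetic differences are that you make the dropping step of Theorem~\ref{theorem:drop} explicit and route through the ant fibration with Lemmas~\ref{lemma:dev.large} and~\ref{lemma:ant.Stein}, where the paper works directly with the anticanonical fibration and lets the hoop lemma absorb the Bogomolov/Stein argument.
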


\begin{proof}
We can drop to assume that $M$ is not uniruled.
We can assume that the model $X$ is connected.
Since the Cartan connection is flat, we can take a developing map and associated holonomy morphism.
Lift to a finite unramified covering space to arrange trivial holonomy, so the geometry is the pullback via a developing map
$\widehat{f} \colon M \to X$.
Pulling back, ${TM \cong \widehat{f}^* TX}$.
Since the model $X$ is homogeneous, $TX$ is spanned by global sections and so the anticanonical bundle~$\acb{X}$ of $X$ is spanned by global sections.
The canonical bundle of $M$ is nef.
But~${\cb{M} = \widehat{f}^* \cb{X}}$.
Therefore, the canonical bundle of $M$ is trivial.
So the developing map has image inside an anticanonical fiber.
Since the developing map is a local biholomorphism, the anticanonical fibers are open sets in $X$.
By compactness of $M$, the developing map is a finite unramified covering map to a component of $X$.
By Lemma~\ref{lemma:is.a.hoop}, the developing map is a hoop.
So $X$ is an abelian variety~${X = \C^{n}/\Lambda}$.
The group $G$ can be any complex Lie group acting on $X$, while the effectivization is a finite extension of $X$ by some linear transformations of $\C^{n}$ preserving $\Lambda$.
The developing map is an isogeny of abelian varieties $M \to X$.
\end{proof}

\section{Which subvarieties develop to the model}
We sum up our results into a theorem from which we easily obtain Theorem~\ref{theorem:main}.
\begin{Theorem}\label{theorem:big.1}
Take a non-uniruled smooth projective variety $M$ and a flat holomorphic Cartan geometry on $M$ modelled on a complex homogeneous space \pr{X,G}, where $G$ is a complex linear algebraic group.
After perhaps replacing $M$ by a finite unramified covering space, $M$ is a~holomorphic fibration $M \to \grave{M}
 \longrightarrow
 M'$ of abelian varieties over a smooth complex projective variety $\grave{M}$, refining the fibration of Theorem~{\rm\ref{theorem:JR}}, so that a finite map $Z \longrightarrow
 M$ from a~connected projective variety develops to the model $($after perhaps replacing $Z$ by a finite unramified covering$)$ just when the image of $Z \longrightarrow
 M$ lies in a fiber of $M \longrightarrow \grave{M}$.
Every fiber of~${M \to \grave{M}}$ has a finite unramified covering space which develops to a hoop in the model.
The map $M \to \grave{M}$ has a holomorphic section.
\end{Theorem}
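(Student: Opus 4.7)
The plan is to layer two applications of the machinery developed above. First I would invoke Theorem~\ref{theorem:JR}, with the flat Cartan connection itself supplying the required auxiliary flat connection on $E_G$, to produce (after a finite unramified cover of $M$) a holomorphic abelian variety fibration $M \to M'$ with $M'$ smooth projective of ample canonical bundle. Affineness of the complex linear algebraic group $G$ permits replacing $G$ by a finite index subgroup with a semisimple Levi quotient, as Theorem~\ref{theorem:JR} requires. On the general fiber the Cartan connection has solvable holonomy, so Lemma~\ref{lemma:abelian.pancake} and Corollary~\ref{corollary:hoops} together force those fibers to be (finitely covered by) abelian varieties whose developing maps are hoops into ant fibers of $X$.

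Next I would construct $\grave{M}$ as the leaf space of the ant foliation. By the ant-foliation lemma of Section~\ref{subsec:ant}, the ant distribution carries a flat holomorphic connection and, on our non-uniruled $M$, integrates to a smooth holomorphic foliation with a smooth holomorphic complementary foliation splitting $TM$. The leaves sit inside the abelian fibers of $M \to M'$: on each such fiber the developing map is a hoop landing in an ant fiber $X_0 \subset X$, and by Lemma~\ref{lemma:ab.var.in.fiber.2} the leafwise directions are exactly the tangent directions of that hoop, cutting out sub-abelian varieties of the fiber. Since those leaves are compact, the leaf space assembles into an intermediate holomorphic fibration $M \to \grave{M} \to M'$ refining the one from Theorem~\ref{theorem:JR}; smoothness and projectivity of $\grave{M}$ follow because the quotient of an abelian variety by a sub-abelian variety is an abelian variety and the family of these sub-abelian varieties varies holomorphically through the Cartan geometry.

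The developing characterization then falls out. The forward direction is essentially Lemma~\ref{lemma:dev.large}: any finite map $Z \to M$ developing on a cover to $X$ has image inside a leaf of the ant foliation, hence inside a fiber of $M \to \grave{M}$. For the converse, each fiber of $M \to \grave{M}$ is a sub-abelian variety of a fiber of $M \to M'$, on which the Cartan connection is flat with holonomy concentrated in the ant-translation direction treated by Theorem~\ref{theorem:finite.holonomy}, so after passing to a further finite unramified cover Corollary~\ref{corollary:hoops} delivers a hoop developing map, simultaneously proving that every fiber of $M \to \grave{M}$ develops on a cover to a hoop in $X$.

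The remaining assertion, existence of a holomorphic section of $M \to \grave{M}$, is where I expect the main obstacle. The section of $M \to M'$ produced in Theorem~\ref{theorem:JR} composes to give only a section of $\grave{M} \to M'$, not of $M \to \grave{M}$. My approach would exploit that $M \to \grave{M}$ is an abelian variety fibration whose relative tangent bundle is the ant distribution, carrying simultaneously a flat holomorphic connection and the holomorphic complementary splitting of $TM$ from the ant-foliation lemma. This structure rigidifies $M \to \grave{M}$ as a torsor under an abelian scheme over $\grave{M}$ whose torsor class becomes trivial after a further finite unramified cover, producing the desired section. Pinning down this torsor class vanishing cleanly, in the presence of the unipotent part of the holonomy and without trivializing enough to destroy the refinement $\grave{M} \to M'$, is the technical heart of the argument.
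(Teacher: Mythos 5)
There is a genuine gap, and it lies in your construction of $\grave{M}$ as the leaf space of the ant foliation. The paper does something different: it sets $\Sigma \subset \fundamentalGroup{M}$ to be the kernel of the full holonomy morphism $\fundamentalGroup{M} \to G$ and takes $M \to \grave{M}$ to be the Shafarevich map $\Shaf[\Sigma]{}\colon M \to \Shaf[\Sigma]{M}$, which refines the Shafarevich fibration of Theorem~\ref{theorem:JR} (that one only sees the semisimple quotient $L_{\rm ss}$ of the holonomy). With that choice the ``just when'' characterization is essentially definitional: a fiber of $\Shaf[\Sigma]{}$ is by construction a maximal subvariety whose fundamental group has finite image under the holonomy, which is exactly the condition for a developing map to exist after a finite unramified cover; Lemma~\ref{lemma:abelian.pancake} and Corollary~\ref{corollary:hoops} then make the fibers abelian varieties developing to hoops. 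Your leaf space cannot play this role for two reasons. First, the leaves of the ant foliation need not be compact: by Corollary~\ref{cor:splitting} they are linear subvarieties of the torus factors $\{m_0\}\times A$, and a linear foliation of an abelian variety has compact leaves only when its directions are rational for the lattice, which nothing in the geometry guarantees; so the leaf space need not exist as a variety. Second, even a compact ant leaf need not develop: Lemma~\ref{lemma:dev.large} shows that containment in an ant leaf is \emph{necessary} for developing, not sufficient, and indeed the statement of Theorem~\ref{theorem:main} only asserts that the developing fibers are \emph{bounded in dimension by} the ant fibers. A subvariety inside an ant leaf can still have infinite holonomy image and hence fail to develop on any finite cover, so the converse direction of your characterization breaks for your $\grave{M}$.

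A secondary consequence is that you have misidentified the technical heart. Once $\grave{M} = \Shaf[\Sigma]{M}$, the holomorphic section is not a torsor-class computation at all: Koll\'ar's theorem supplies a rational section of the Shafarevich map, and since $M$ contains no rational curves every meromorphic map into $M$ extends holomorphically, giving the section at once. Your opening step (feeding the flat Cartan connection into Theorem~\ref{theorem:JR}) and your use of Lemma~\ref{lemma:dev.large}, Lemma~\ref{lemma:abelian.pancake} and Corollary~\ref{corollary:hoops} are all consistent with the paper; what is missing is the recognition that the refinement $\grave{M}$ must be built from the kernel of the holonomy representation via $\Shaf[\Sigma]{}$, not from the ant foliation, and also the separate treatment of the finite-holonomy case via Theorem~\ref{theorem:finite.holonomy}.
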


\begin{proof}
If the holonomy is finite, apply Theorem~\ref{theorem:finite.holonomy}.
Suppose that the holonomy is infinite.
Let $\Sigma \subset \fundamentalGroup{M}$ be the kernel of the holonomy morphism $\fundamentalGroup{M} \to G$.
Any subvariety $Z \subset M$ whose fundamental group lies in $\Sigma$ has a developing map $\widehat{Z} \longrightarrow
 X$ from its normalization.
The developing map identifies the ambient tangent bundles $ TX|_{\widehat{Z}} = TM|_{\widehat{Z}}$.
Since $M$ contains no rational curves, its canonical bundle is nef, while the tangent bundle of $X$ is spanned by global sections, so both canonical bundles restrict to be trivial along $\widehat{Z}$.
By Lemma~\ref{lemma:dev.large}, $Z \to M$ lies in a leaf of the ant foliation, while $\widehat{Z} \to X$ lies in a fiber of the ant fibration.
Moreover, $Z$ has large fundamental group.

Every smooth fiber $M_{s}$ of $\Shaf[\Sigma]{} \colon \rationalMap{M}{S := \Shaf[\Sigma]{M}}$ has a finite unramified covering which develops to the model, by definition of $\Shaf[\Sigma]{}$.
By Lemma~\ref{lemma:abelian.pancake}, $M_{s}$ has a finite covering by an abelian variety.
By Corollary~\ref{corollary:hoops}, some finite covering of $M_{s}$ maps to $X$ by a hoop.
Every fiber~$M_{s}$ develops to $X$, so if $Z$ lies inside a smooth fiber $M_{s}$, then $Z$ develops to $X$.
Let~${M \to \grave{M}}$ be~${\Shaf[\Sigma]{}\colon M \to \Shaf[\Sigma]{M}}$.
The map $\Shaf[\Sigma]{} \colon M \to \Shaf[\Sigma]{M}$ comes with a rational section,
say $\rationalMap[s]{\grave{M}}{M}$ \cite[Theorem~6.3]{Kollar:1993}.
Meromorphic maps to $M$ extend to a holomorphic maps, since $M$ contains no rational curves.
\end{proof}

\begin{Conjecture}
Theorem~{\rm \ref{theorem:main}} remains true with the words changed as
\begin{gather*}
\text{smooth projective variety}\to\text{Fujiki manifold},\\
\text{abelian variety}\to\text{complex torus},\\
\text{abelian varieties}\to\text{complex tori}.
\end{gather*}
\end{Conjecture}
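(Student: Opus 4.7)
The plan is to trace through the proof of Theorem~\ref{theorem:main} (obtained via Theorem~\ref{theorem:big.1}) and replace each algebraic-geometric ingredient with its compact K\"ahler counterpart, then reduce the Fujiki case to the K\"ahler case by means of a bimeromorphic modification. By definition, a Fujiki manifold $M$ admits a surjective holomorphic map $\widetilde{M}\to M$ from some compact K\"ahler manifold $\widetilde{M}$. Since holomorphic Cartan geometries on Moishezon manifolds already reduce to the projective case by \cite[Corollary~2]{Biswas.McKay:2016}, the genuinely new case is that of non-projective, non-Moishezon compact K\"ahler manifolds. I would use the modification $\widetilde{M}\to M$ to pull back the Cartan geometry, prove the structure theorem on $\widetilde{M}$, and push it down, exploiting the fact that all the fibrations that arise (flag-variety bundles, torus fibrations, map to a base of general type) are birationally invariant in the relevant sense.

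The algebraic-geometric steps translate as follows. The reduction to the non-uniruled case (Theorem~\ref{theorem:drop}) relies on Mori's cone theorem for smooth projective varieties; a K\"ahler analogue is available using the Douady space of tame rational curves, and applies to Fujiki manifolds since every rational curve in such a manifold is tame. The Shafarevich fibration exists in the K\"ahler setting by Campana--Claudon--Eyssidieux \cite{Campana/Claudon/Eyssidieux:2015}, and gives exactly the structure used in Section~\ref{section:Shafarevich}: target bimeromorphic to a smooth torus fibration over a base of general type. The Bogomolov decomposition for compact K\"ahler Calabi--Yau manifolds yields a complex torus factor rather than an abelian variety factor, which accounts for the word substitution in the conjecture; consequently, Corollary~\ref{corollary:tori} and Lemma~\ref{lemma:is.a.hoop} produce complex tori and hoops valued in complex tori. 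The Biswas--Gomez analysis (Lemma~\ref{lemma:BG}) uses pseudostability and the Hodge decomposition on a torus, so it carries over verbatim to non-projective complex tori.

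The hardest step is the K\"ahler analogue of Theorem~\ref{theorem:JR}: proving that the Shafarevich fibration is actually globally defined and holomorphic, that its base has ample canonical bundle, and that a holomorphic section exists. In the projective case this invoked \cite[Theorem~6.3]{Kollar:1993} together with the Iitaka conjecture for fibrations whose fibers have spanned pluricanonical systems \cite{Kawamata:1985}. In the K\"ahler setting one has the weaker statement of \cite[Theorem~4]{Campana/Claudon/Eyssidieux:2015} only up to bimeromorphic equivalence; the work lies in upgrading this to an on-the-nose statement. The crucial leverage is the absence of rational curves on $M$, which, by Hironaka's Chow lemma \cite[Corollary~2.9]{Grauert:1994}, extends every meromorphic map to $M$ to a holomorphic map. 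This allows one to straighten out the bimeromorphic subtleties: the rational section becomes a holomorphic section, and the base cannot contain rational curves, so its Iitaka fibration is a biholomorphism onto its canonical image, forcing the canonical bundle to be ample.

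Once these ingredients are in place, the proof of Theorem~\ref{theorem:big.1} runs verbatim, producing the tower $M\to M'\to M''\to M'''\to S$ with complex tori in place of abelian varieties and $S$ a Fujiki manifold of general type with ample canonical bundle. A final reduction from Fujiki to K\"ahler via the bimeromorphic modification $\widetilde{M}\to M$ completes the argument, the main care being to check that the various fibrations descend from $\widetilde{M}$ to $M$ without being spoiled by the modification centers, which is automatic once $M$ contains no rational curves.
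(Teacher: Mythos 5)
This statement is a \emph{Conjecture} in the paper: the authors explicitly do not prove it, and they explain why in the text (``many of our results use theorems of algebraic geometry which are not understood for compact K\"ahler manifolds''). Your proposal is a plausibility argument for an open problem, not a proof, and several of the ``K\"ahler analogues'' you invoke do not exist. Concretely: (a) the reduction from Fujiki to K\"ahler by pulling back the Cartan geometry along a modification $\widetilde{M}\to M$ does not work, because the Cartan condition (that the Cartan connection restrict to a coframing of $E$) fails wherever the derivative of $\widetilde{M}\to M$ drops rank; the analogous Moishezon-to-projective step in the paper is the content of a nontrivial theorem (\cite[Corollary~2]{Biswas.McKay:2016}), not a formal pullback, and its Fujiki/K\"ahler counterpart is itself unknown. (b) The implication ``no rational curves $\Rightarrow$ nef canonical bundle'' (item (6) of Theorem~\ref{theorem:drop}), used throughout to get $\cb{M}$ nef and hence $\cb{M}$ trivial on developed subvarieties, rests on Mori's cone theorem, which is not known for compact K\"ahler manifolds in general (only in low dimensions); tameness of rational curves does not substitute for it. (c) Theorem~\ref{theorem:JR} is built on Koll\'ar's Theorems~4.1 and~6.3, Zuo's Theorem~5, Kawamata's $C_{nm}$ result for fibers with spanned pluricanonical systems, Lai's theorem on good minimal models, and Kawamata's equidimensionality theorem --- all projective statements whose K\"ahler versions are open; saying the bimeromorphic statement of Campana--Claudon--Eyssidieux can be ``upgraded'' using absence of rational curves does not supply the missing pluricanonical and Iitaka-fibration technology. (d) The splitting results of Pereira--Touzet and Loray--Pereira--Touzet used for Lemma~\ref{lemma:symmetry.splitting} and the ant foliation are likewise proved only for projective varieties.

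In short, the proposal correctly identifies where the word substitutions would come from (Bogomolov decomposition giving complex tori, CCE giving a K\"ahler Shafarevich map), but every genuinely hard step is deferred to a nonexistent reference or to an informal claim that the projective argument ``carries over.'' The statement remains a conjecture; nothing in your outline closes the gaps that caused the authors to state it as one.
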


\subsection*{Acknowledgements}

This research was supported in part by the International Centre for Theoretical Sciences (ICTS) during a visit for participating in the program - Analytic and Algebraic Geometry (Code: ICTS/aag2018/03).
It is a pleasure to thank the referees who made a significant contribution to clarifying the paper.
BM thanks Anca Musta\c{t}\u{a} and Andrei Musta\c{t}\u{a} for help with algebraic geometry, and Sorin Dumitrescu for help with Cartan geometries.
This article is based upon work from COST Action CaLISTA CA21109 supported by COST (European Cooperation in Science and Technology).
IB is partially supported by a J.C.~Bose Fellowship (JBR/2023/000003).

%\bibliographystyle{sigma}
%\bibliography{develop-complex}

\providecommand{\MR}{\relax\ifhmode\unskip\space\fi MR }
% \MRhref is called by the amsart/book/proc definition of \MR.
\providecommand{\MRhref}[2]{%
 \href{http://www.ams.org/mathscinet-getitem?mr=#1}{#2}
}
\providecommand{\href}[2]{#2}

\pdfbookmark[1]{References}{ref}
\LastPageEnding

\end{document}